%
\documentclass[12pt, reqno]{amsart}
\usepackage{amsmath, amsthm, amscd, amsfonts, amssymb, graphicx, color}
\usepackage[bookmarksnumbered, colorlinks, plainpages]{hyperref}
\hypersetup{colorlinks=true,linkcolor=blue, anchorcolor=blue, citecolor=blue, urlcolor=red, filecolor=magenta, pdftoolbar=true}

\textheight 22.7truecm \textwidth 15.2truecm
\setlength{\oddsidemargin}{0.35in}\setlength{\evensidemargin}{0.35in}

\setlength{\topmargin}{-.5cm}

\newtheorem{theorem}{Theorem}[section]

\newtheorem{proposition}[theorem]{Proposition}
\newtheorem{corollary}[theorem]{Corollary}
\theoremstyle{definition}
\newtheorem{definition}[theorem]{Definition}

\newtheorem{property}[theorem]{Property}
\theoremstyle{remark}
\newtheorem{remark}[theorem]{Remark}
\numberwithin{equation}{section}

\begin{document}

\setcounter{page}{1}

\title[Maximum principle ...]{Maximum principle for space and time-space fractional partial differential equations}

\author[M. Kirane \MakeLowercase{and} B. T. Torebek]{Mokhtar Kirane \MakeLowercase{and} Berikbol T. Torebek$^{*}$}

\address{\textcolor[rgb]{0.00,0.00,0.84}{Mokhtar Kirane\newline LaSIE, Facult\'{e} des Sciences, \newline Pole Sciences et Technologies, Universit\'{e} de La Rochelle \newline Avenue M. Crepeau, 17042 La Rochelle Cedex, France \newline NAAM Research Group, Department of Mathematics, \newline Faculty of Science, King Abdulaziz University, \newline P.O. Box 80203, Jeddah 21589, Saudi Arabia}}
\email{\textcolor[rgb]{0.00,0.00,0.84}{mkirane@univ-lr.fr}}

\address{\textcolor[rgb]{0.00,0.00,0.84}{Berikbol T. Torebek \newline Department of Mathematics: Analysis,
Logic and Discrete Mathematics \newline Ghent University, Krijgslaan 281, Ghent, Belgium \newline Al--Farabi Kazakh National University \newline Al--Farabi ave. 71, 050040, Almaty, Kazakhstan \newline Institute of
Mathematics and Mathematical Modeling \newline 125 Pushkin str.,
050010 Almaty, Kazakhstan }}
\email{\textcolor[rgb]{0.00,0.00,0.84}{berikbol.torebek@ugent.be}}

\thanks{All authors contributed equally to the writing of this paper. All authors read and approved the final manuscript.}

\let\thefootnote\relax\footnote{$^{*}$Corresponding author}

\subjclass[2010]{Primary 35B50; Secondary 26A33, 35K55, 35J60.}

\keywords{Caputo derivative, sequential derivative, time-space fractional diffusion equation,
fractional elliptic equation, maximum principle.}

\begin{abstract} In this paper we obtain new estimates of the sequential Caputo fractional derivatives of a function at its extremum points. We derive comparison principles for the linear fractional differential equations, and apply these principles to obtain lower and upper bounds of solutions of linear and nonlinear fractional differential equations. The extremum principle is then applied to show that the initial-boundary-value problem for nonlinear anomalous diffusion possesses at most one classical solution and this solution depends continuously  on the initial and boundary data. This answers positively to the open problem about maximum principle for the space and time-space fractional PDEs posed by Luchko in 2011. The extremum principle for an elliptic equation with a fractional derivative and for the fractional Laplace equation are also proved.
\end{abstract} \maketitle
\tableofcontents
\section{Introduction}
\subsection{Maximum principle for fractional derivatives} One of the most useful tools employed in the study of theory of partial differential equations are the maximum and minimum principles. They enable to obtain information about solutions without knowing their explicit forms.

Recently, with the development of fractional differential equations, the extremum principles for fractional differential equations have started to draw attention. This motivates us to consider the extremum principle for the sequential Caputo derivatives.

In \cite{Luchko1} Luchko proved a maximum principle for $\mathcal{D}_{0+}^\alpha$ Caputo fractional derivative in the following form:

\begin{itemize}
  \item Let  $f \in C^1((0, T )) \cap C([0, T ])$ attain its maximum over $[0, T ]$ at $t_0 \in (0, T ],$
then $\mathcal{D}_{0+}^\alpha f(t_0)\geq 0.$
  \item Let  $f \in C^1((0, T )) \cap C([0, T ])$ attain its minimum over $[0, T ]$ at $t_0 \in (0, T ],$
then $\mathcal{D}_{0+}^\alpha f(t_0)\leq 0.$
\end{itemize}

Based on an extremum principle for the Caputo fractional derivative he proved a maximum principle for the generalized time-fractional diffusion equation over an open bounded domain. The maximum principle was then applied to show some uniqueness and existence results for the initial-boundary-value problem for the generalized time-fractional diffusion equation \cite{Luchko2}. He also investigated the initial-boundary-value problems for the generalized multi-terms time-fractional diffusion equation \cite{Luchko3} and the diffusion equation of distributed order \cite{Luchko4}, and obtained some existence results for the generalized solutions in \cite{Luchko3, Luchko4}. For the one dimensional time-fractional diffusion equation, the generalized solution to the initial-boundary value problem was shown to be a solution in the classical sense \cite{Luchko5}.

In \cite{Al-Refai} Al-Refai generalized the results of Luchko as follows:

\begin{itemize}
  \item Let $f \in C^1([0, 1])$ attain its maximum at $t_0 \in (0, 1),$ then \begin{equation}\label{01}\mathcal{D}_{0+}^\alpha f(t_0) \geq \frac{{t_0}^{-\alpha}}{\Gamma(1-\alpha)}(f(t_0) - f(0)) \geq 0,\end{equation} for all $0 < \alpha < 1.$
  \item Let $f \in C^1([0, 1])$ attain its maximum at $t_0 \in (0, 1),$ then \begin{equation}\label{02}D_{0+}^\alpha f(t_0) \geq \frac{{t_0}^{-\alpha}}{\Gamma(1-\alpha)}f(t_0),\end{equation} for all $0 < \alpha < 1.$
Moreover, if $f(t_0) \geq 0,$ then $D_{0+}^\alpha f(t_0) \geq 0.$ Here $D_{0+}^\alpha$ is the Riemann-Liouville fractional derivative of order $\alpha.$
\end{itemize}

These results were used to prove the maximum principle for time-fractional diffusion equations by Al-Refai and Luchko \cite{Al-Refai-Luchko1, Al-Refai-Luchko2}, and in other papers \cite{Chan, LiuZeng, LuchkoYa, YeLiu}.

Maximum and minimum principles for time-fractional diffusion equations with singular kernel fractional derivative are proposed in \cite{CaoKong, KiraneTorebek}. In \cite{Al-Refai-Ab, Al-Refai2, KiraneBT, Borikhanov, Ahmad} it is proved a maximum principle for generalized time-fractional diffusion equations, based on an extremum principle for the fractional derivatives with non-singular kernel.

An investigation of the maximum principle for fractional Laplacian can be found in \cite{Cabre, Capella, Cheng, Del}.

As we have mentioned above, for classical fractional derivatives the question of the values of the fractional derivatives at the extreme points has been studied quite well for order of the derivatives in $(0,1).$ The case where the order of the derivative is less than two has not yet been fully investigated.

In \cite{ShiZhang} Shi and Zhang obtained the following results:
\begin{itemize}
  \item Let $f \in C^2(0, 1) \cap C([0, 1])$ attain its maximum at $t_0 \in (0, 1].$ Then $\mathcal{D}_{0+}^\alpha f(t_0) \leq 0,$ $1<\alpha\leq 2.$
\end{itemize}
But, Al-Refai \cite{Al-Refai} showed that the result claimed in \cite{ShiZhang} is not correct and he corrected the result of Shi and Zhang in the following form
\begin{itemize}
  \item Let $f \in C^2([0, 1])$ attain its minimum at $t_0 \in (0, 1),$ then
  $$\mathcal{D}_{0+}^\alpha f(t_0)\geq \frac{t_0^{-\alpha}}{\Gamma(2-\alpha)}\left[(\alpha-1)(f(0)-f(t_0))-t_0f'(0)\right],\,\, \textrm{for all} \,\, 1<\alpha<2;$$
  \item Let $f \in C^2([0, 1])$ attain its minimum at $t_0 \in (0, 1),$ and $f'(0) \leq 0,$ then $\mathcal{D}_{0+}^\alpha f(t_0) \geq 0,$ for all $1 < \alpha < 2;$
  \item Let $f \in C^2([0, 1])$ attain its minimum at $t_0 \in (0, 1),$ then $$D^\alpha_{0+} f(t_0)\geq \frac{t_0^{-\alpha}}{\Gamma(2-\alpha)}(\alpha-1)f(t_0),\,\, \textrm{for all} \,\, 1<\alpha<2.$$
  Moreover, if $f(t_0) \geq 0,$ then $D_{0+}^\alpha f(t_0) \geq 0.$
\end{itemize}

Luchko in \cite{Luchko5, Luchko6} proposed an important and interesting open problem: \textcolor{blue}{can one extend the maximum principle to the space and time-space fractional partial differential equations?.}

In \cite{YLAT14} Ye, Liu, Anh and Turner solved Luchko's open problem with additional conditions. More precisely, they proved a maximum principle for time-space fractional differential equations, when the derivatives of the solution at different ends of the interval have different signs. They considered time-space fractional differential equations with time-fractional Caputo derivative and the modified Riesz space-fractional derivatives in Caputo sense.

Further, in \cite{LiuZeng}, the above results were extended to time-space fractional differential equations with variable order. We also note that the maximum principle for the time-space fractional diffusion equation with space fractional Laplacian was studied in \cite{JiaLi}.

In this paper, we will give an answer to Luchko's open problem. We establish the maximum principle for the time-space and space fractional partial differential equations without additional conditions on the solution.

The aim  of this article is:
\begin{itemize}
  \item to obtain the estimates for the sequential Caputo fractional derivative of order $(1,2]$ at the extremum points;
  \item to obtain the comparison principles for the linear and non-linear ordinary fractional differential equations;
  \item to obtain the maximum and minimum principles for the time-space fractional diffusion and pseudo-parabolic equations with Caputo and Riemann-Liouville time-fractional derivatives;
  \item to obtain the uniqueness of solution and its continuous dependence on the initial conditions of the initial-boundary problems for the nonlinear time-space fractional diffusion and pseudo-parabolic equations;
  \item to obtain the maximum and minimum principles for the fractional elliptic equations;
  \item to obtain the uniqueness of solution of the boundary-value problems for the fractional elliptic equation.
\end{itemize}

\subsection{Definitions and some properties of fractional operators}
Let us give basic definitions of fractional differentiation and integration of the Riemann--Liouville and Caputo types.
\begin{definition}\cite{Kilbas} Let $f$ be an integrable real-valued function on the interval $[a, b],\, -\infty<a<b<+\infty$. The following integral
$$I^\alpha_{a+}  \left[ f \right]\left( t \right) = \left(f*K_{\alpha}\right)(t)={\rm{
}}\frac{1}{{\Gamma \left( \alpha \right)}}\int\limits_a^t {\left(
{t - s} \right)^{\alpha  - 1} f\left( s \right)} ds$$
is called the Riemann--Liouville integral operator of fractional order $\alpha>0.$ Here $K_{\alpha}=\frac{t^{\alpha-1}}{\Gamma(\alpha)},$ $\Gamma$ denotes the Euler gamma function.
\end{definition}
\begin{definition}\cite{Kilbas} Let $f\in L^1([a,b])$ and $f*K_{1-\alpha}\in W^{1,1}([a,b]),$ where $W^{1,1}([a,b])$ is the Sobolev space $$W^{1,1}([a,b])=\left\{f\in L^1([a,b]):\,\frac{d}{dt}f\in L^1([a,b])\right\}.$$ The Riemann--Liouville fractional derivative of order $0<\alpha<1$ of $f$ is defined as
$$D^\alpha_{a+} \left[ f \right](t) = \frac{{d}}{{dt }}I^{1 - \alpha }_{a+}\left[ f \right](t).$$
\end{definition}
\begin{definition}\label{def3}\cite{Kilbas} Let $f\in L^1([a,b])$ and $f*K_{1-\alpha}\in W^{1,1}([a,b]).$ For $0<\alpha<1,$ the fractional derivative of $f$
$$\mathcal{D}^\alpha_{a+} \left[ f \right]\left( t \right) = D^\alpha_{a+} \left[ {f\left( t \right) - f\left(a \right)}\right]$$
is the Caputo derivative.

If $f\in C^1([a,b])$, then the Caputo fractional derivative is defined as $$\mathcal{D}^\alpha_{a+} \left[ f \right]\left( t \right) = I^{1 - \alpha }_{a+} f'\left( t \right).$$
\end{definition}
Furthermore, we will use another kind of fractional order derivative. Namely, the sequential Caputo derivative of order $k\alpha, k = 1, 2, ...$
$$\mathcal{D}^{\alpha_1}_{a+}\cdot\mathcal{D}^{\alpha_2}_{a+}\cdot...\cdot\mathcal{D}^{\alpha_k}_{a+}.$$
Note that the concept of sequential derivative was introduced in \cite{Kilbas} (for more details we refer to \cite{MillerRoss, Podlubny, SKM87, TT16, TT14}). For the sequential derivative, in general
$\mathcal{D}^{\alpha_1}_{a+}\mathcal{D}^{\alpha_2}_{a+}\neq \mathcal{D}^{\alpha_1+\alpha_2}_{a+}.$

\begin{property}\label{p4}\cite{Kilbas}
If $f\in {{W}^{1}_2}\left([a, b] \right),$ then the Riemann-Liouville fractional derivative of order $\alpha $ can be represented in the form
$$D^{\alpha}_{a+} f\left( t \right)=\frac{f(a)}{\Gamma(1-\alpha)}\left(t-a\right)^{-\alpha}+\mathcal{D}^{\alpha}_{a+} f\left( t \right).$$
\end{property}

\begin{property}\label{p5} \cite{Kilbas} If $f\in L^1([a,b])$ and $I^{1-\alpha}f\in W^1_{2}([a,b]),$ then
$$I^{\alpha }_{a+}D^{\alpha }_{a+}f\left( t \right)=f(t)-I^{1-\alpha}_{a+}f(a)\frac{\left(t-a\right)^{\alpha-1}}{\Gamma(\alpha)}.$$
\end{property}

\begin{property}\label{p6} \cite{Kilbas} If $\beta>\alpha>0,\,$ and $-\infty<a<b<+\infty,$ then
$${D}^{\alpha}_{a+}\left( t-a \right)^{\beta-1}=\frac{\Gamma(\beta)}{\Gamma(\beta-\alpha)}\left( t-a \right)^{\beta-\alpha-1}.$$
\end{property}

\begin{property}\label{p7} \cite{Kilbas} If $0<\alpha<1$ and $-\infty<a<b<+\infty,$ then
$${D}_{a+}^{\alpha }\left( t-a \right)^{\alpha-1}=0.$$
\end{property}

\section{Main Results}\label{MR}

\begin{proposition}\label{prop1} Let $f\in C^1([a,b])$ and $\mathcal{D}^{\alpha}_{a+}\mathcal{D}^{\beta}_{a+}f\in C([a,b]).$ If $f$ attains its minimum value at $x^*\in \left[a, b \right]$, then\\
(i) if $0<\alpha, \beta<1$ and $1<\alpha+\beta<2,$
\begin{equation}\label{0.1}\mathcal{D}^{\alpha}_{a+}\mathcal{D}^{\beta}_{a+} f\left(x^*\right)\geq \frac{\alpha+\beta-1}{\Gamma(2-\alpha-\beta) }\left(x^* - a\right)^{-\alpha-\beta}\left( f\left(a\right)-f\left(x^*\right) \right)\geq 0;
\end{equation}
(ii)	if $0<\alpha, \beta<1$ and $\alpha+\beta<1,$
\begin{equation}\label{0.1*}\mathcal{D}^{\alpha}_{a+}\mathcal{D}^{\beta}_{a+} f\left(x^*\right)\leq \frac{\alpha+\beta-1}{\Gamma(2-\alpha-\beta) }\left(x^* - a\right)^{-\alpha-\beta}\left( f\left(a\right)-f\left(x^*\right) \right)\leq 0;
\end{equation}
(iii)	if $0<\alpha, \beta<1$ and $\alpha+\beta=1,$ $\mathcal{D}^{\alpha}_{a+}\mathcal{D}^{\beta}_{a+} f\left(x^*\right)=0.$
\end{proposition}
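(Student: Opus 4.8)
The plan is to reduce the sequential operator to a single fractional operator and then use the first-order bound \eqref{01}. Since $f\in C^1([a,b])$, Definition~\ref{def3} gives $\mathcal{D}^{\beta}_{a+}f=I^{1-\beta}_{a+}f'$, which vanishes at $t=a$; hence $\mathcal{D}^{\alpha}_{a+}\bigl(\mathcal{D}^{\beta}_{a+}f\bigr)=D^{\alpha}_{a+}\bigl(I^{1-\beta}_{a+}f'\bigr)=\tfrac{d}{dt}I^{1-\alpha}_{a+}I^{1-\beta}_{a+}f'=\tfrac{d}{dt}I^{2-\alpha-\beta}_{a+}f'$ by the semigroup property of the Riemann--Liouville integral. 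Thus $\mathcal{D}^{\alpha}_{a+}\mathcal{D}^{\beta}_{a+}f$ equals $f'$ if $\alpha+\beta=1$, equals $\mathcal{D}^{\alpha+\beta}_{a+}f$ if $\alpha+\beta<1$, and equals $D^{\alpha+\beta-1}_{a+}f'$ if $1<\alpha+\beta<2$. We may assume $x^*\in(a,b)$, so that $f'(x^*)=0$.

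Cases (iii) and (ii) follow at once. In (iii), $\mathcal{D}^{\alpha}_{a+}\mathcal{D}^{\beta}_{a+}f(x^*)=f'(x^*)=0$. In (ii), $\alpha+\beta\in(0,1)$ and $-f$ attains its maximum at $x^*$, so \eqref{01} applied to $-f$ gives $\mathcal{D}^{\alpha+\beta}_{a+}f(x^*)\le \tfrac{(x^*-a)^{-\alpha-\beta}}{\Gamma(1-\alpha-\beta)}\bigl(f(x^*)-f(a)\bigr)\le 0$; since $\Gamma(2-\alpha-\beta)=(1-\alpha-\beta)\Gamma(1-\alpha-\beta)$, the middle term is exactly the right-hand side of \eqref{0.1*}, which is $\le0$ because $\alpha+\beta-1<0$ and $f(a)\ge f(x^*)$.

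The content is in case (i). Set $m(t):=\int_a^t\frac{f(t)-f(s)}{(t-s)^{\alpha+\beta}}\,ds$, a convergent integral since $\alpha+\beta<2$. From $D^{\alpha+\beta-1}_{a+}f'(t)=\tfrac{1}{\Gamma(2-\alpha-\beta)}\tfrac{d}{dt}\int_a^t(t-s)^{1-\alpha-\beta}f'(s)\,ds$ I would integrate by parts, shifting the derivative off $f$ onto the kernel via $f'(s)\,ds=d[f(s)-f(t)]$ (the boundary term at $s=t$ vanishing because $2-\alpha-\beta>0$ and $f\in C^1$), to get
\[
\mathcal{D}^{\alpha}_{a+}\mathcal{D}^{\beta}_{a+}f(t)=\frac{1}{\Gamma(2-\alpha-\beta)}\frac{d}{dt}\Bigl[(t-a)^{1-\alpha-\beta}\bigl(f(t)-f(a)\bigr)+(\alpha+\beta-1)\,m(t)\Bigr].
\]
Differentiating the first bracket and using $f'(x^*)=0$ produces precisely $(\alpha+\beta-1)(x^*-a)^{-\alpha-\beta}\bigl(f(a)-f(x^*)\bigr)$, whence
\[
\mathcal{D}^{\alpha}_{a+}\mathcal{D}^{\beta}_{a+}f(x^*)=\frac{\alpha+\beta-1}{\Gamma(2-\alpha-\beta)}\Bigl[(x^*-a)^{-\alpha-\beta}\bigl(f(a)-f(x^*)\bigr)+m'(x^*)\Bigr].
\]
As $\tfrac{\alpha+\beta-1}{\Gamma(2-\alpha-\beta)}>0$, the first inequality in \eqref{0.1} is equivalent to $m'(x^*)\ge0$, and the final $\ge0$ then follows from $f(a)\ge f(x^*)$. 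To obtain $m'(x^*)\ge0$ one may, after differentiating $m$ and integrating by parts once more, arrive at the representation $m'(x^*)=(\alpha+\beta)\int_a^{x^*}\tfrac{f(s)-f(x^*)}{(x^*-s)^{\alpha+\beta+1}}\,ds$, which is non-negative since $f(s)\ge f(x^*)$; equivalently, for $h>0$ one splits $m(x^*+h)-m(x^*)$ into its part over $[a,x^*]$, which after regrouping equals $w(x^*+h)\int_a^{x^*}(x^*+h-s)^{-\alpha-\beta}\,ds+\int_a^{x^*}w(s)\bigl[(x^*-s)^{-\alpha-\beta}-(x^*+h-s)^{-\alpha-\beta}\bigr]ds\ge0$ with $w:=f-f(x^*)\ge0$, plus the near-diagonal integral $\int_{x^*}^{x^*+h}\frac{f(x^*+h)-f(s)}{(x^*+h-s)^{\alpha+\beta}}\,ds$, and divides by $h$ before letting $h\to0^+$.

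The step I expect to be the main obstacle is making this last point rigorous when $f$ is only $C^1$: the near-diagonal integral — equivalently, the convergence of the hypersingular integral $\int_a^{x^*}\tfrac{f(s)-f(x^*)}{(x^*-s)^{\alpha+\beta+1}}\,ds$ — is controlled neither by $C^1$ regularity alone (the naive bound $|f(x^*+h)-f(s)|\le(x^*+h-s)\sup_{[x^*,x^*+h]}|f'|$ leaves an unbounded factor $h^{1-\alpha-\beta}$) nor by convexity. Here one must invoke the standing hypothesis $\mathcal{D}^{\alpha}_{a+}\mathcal{D}^{\beta}_{a+}f\in C([a,b])$ — which forces $m$ to be differentiable at $x^*$ and restricts the local behaviour of $f'$ near $x^*$ — together with the sign information $f(s)\ge f(x^*)$ and $f'(x^*)=0$ coming from the minimum.
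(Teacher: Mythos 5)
Your proposal is correct and follows essentially the same route as the paper: both reduce $\mathcal{D}^{\alpha}_{a+}\mathcal{D}^{\beta}_{a+}f$ to $\frac{1}{\Gamma(2-\alpha-\beta)}\frac{d}{dt}\int_a^t(t-s)^{1-\alpha-\beta}f'(s)\,ds$ (you via the semigroup property of $I^{\gamma}_{a+}$, the paper via Fubini plus the Beta integral applied to $h=f-f(x^*)$) and then integrate by parts to arrive at exactly the same identity, namely the boundary term $\frac{\alpha+\beta-1}{\Gamma(2-\alpha-\beta)}(x^*-a)^{-\alpha-\beta}(f(a)-f(x^*))$ plus $\frac{(\alpha+\beta-1)(\alpha+\beta)}{\Gamma(2-\alpha-\beta)}\int_a^{x^*}(x^*-s)^{-\alpha-\beta-1}(f(s)-f(x^*))\,ds$, whose sign settles all three cases. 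The convergence issue you flag for this hypersingular integral when $1<\alpha+\beta<2$ and $f$ is only $C^1$ is genuine, but it is equally present (and left unaddressed) in the paper's own proof, which performs the same integration by parts and disposes of the boundary term at $s=x^*$ by l'H\^opital using $h(x^*)=h'(x^*)=0$.
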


\begin{proof}
For the proof of part (i), we define, as in \cite{ShiZhang} the auxiliary function $$h\left( x \right)=f\left( x \right)-f\left( x^* \right),\text{ }x\in \left[a,b \right];$$ then it follows that $h\left( x \right)\ge 0,$ on $\left[a, b \right],$ $h\left(x^*\right)=h'\left(x^*\right)=0$ and $$\mathcal{D}^{\alpha}_{a+}\mathcal{D}^{\beta}_{a+} h \left( x \right)=\mathcal{D}^{\alpha}_{a+}\mathcal{D}^{\beta}_{a+}f\left( x \right).$$
Since $h\in C^1([a,b])$ and $\mathcal{D}^{\alpha}_{a+}\mathcal{D}^{\beta}_{a+}h\left(x \right)\in C([a,b]),$ we have
\begin{align*}\mathcal{D}^{\alpha}_{a+}\mathcal{D}^{\beta}_{a+} h \left( x^* \right)&
=\frac{1}{\Gamma(1-\alpha)}\int\limits_a^{x^*}\left(x^*-s\right)^{-\alpha} \frac{d}{ds}\mathcal{D}^{\beta}_{a+} h \left( s \right)ds\\&=
\frac{1}{\Gamma(2-\alpha)}\frac{d}{dx^*}\int\limits_a^{x^*}\left(x^*-s\right)^{1-\alpha} \frac{d}{ds}\mathcal{D}^{\beta}_{a+} h \left( s \right)ds\\&=
\frac{1}{\Gamma(2-\alpha)}\frac{d}{dx^*}\left[(x^*-s)^{1-\alpha}\mathcal{D}^{\beta}_{a+} h \left( s \right)\Big{|}_{s=a}^{s=x^*} + (1-\alpha)\int\limits_a^{x^*}\left(x^*-s\right)^{-\alpha} \mathcal{D}^{\beta}_{a+} h \left( s \right)ds\right] \\&=
-\frac{(x^*-a)^{-\alpha}}{\Gamma(1-\alpha)}\mathcal{D}^{\beta}_{a+} h \left( a \right) + \frac{d}{dx^*}\int\limits_a^{x^*}h'(\tau)\int\limits_{\tau}^{x^*}\frac{\left(x^*-s\right)^{-\alpha} (s-\tau)^{-\beta}}{\Gamma(1-\alpha) \Gamma(1-\beta)} ds d\tau.\end{align*}
As $h\in C^1([a,b]),$ then
\begin{align*}\left|\mathcal{D}^{\beta}_{a+} h \left( x \right)\right|&\leq\frac{1}{\Gamma(1-\beta)}\int\limits_a^x (x-s)^{-\beta} |h'(s)| ds\\& \leq \sup\limits_{a\leq s\leq x}|h'(s)|\frac{1}{\Gamma(2-\beta)}(x-s)^{1-\beta}\Big{|}_a^x;\end{align*}
therefore $\mathcal{D}^{\beta}_{a+} h \left( a \right)=0.$

Then, for $\mathcal{D}^{\alpha}_{a+}\mathcal{D}^{\beta}_{a+} h $ we obtain \begin{equation}\label{0.2}\mathcal{D}^{\alpha}_{a+}\mathcal{D}^{\beta}_{a+} h \left( x^* \right)=\frac{d}{dx^*}\int\limits_a^{x^*}h'(\tau)\int\limits_{\tau}^{x^*}\frac{\left(x^*-s\right)^{-\alpha} (s-\tau)^{-\beta}}{\Gamma(1-\alpha) \Gamma(1-\beta)} ds d\tau.\end{equation}
As
$$\int\limits_{\tau}^{x^*}\left(x^*-s\right)^{-\alpha} (s-\tau)^{-\beta}ds=\frac{\Gamma(1-\alpha)\Gamma(1-\beta)}{\Gamma(2-\alpha-\beta)}(x^*-\tau)^{1-\alpha-\beta},$$
hence
\begin{align*}\mathcal{D}^{\alpha}_{a+}\mathcal{D}^{\beta}_{a+} h \left( x^* \right)&=\frac{1}{\Gamma(2-\alpha-\beta)}\frac{d}{dx^*}\int\limits_a^{x^*}(x^*-\tau)^{1-\alpha-\beta} h'(\tau) d\tau\\& =\frac{1-\alpha-\beta}{\Gamma(2-\alpha-\beta)}\int\limits_a^{x^*}(x^*-\tau)^{-\alpha-\beta} h'(\tau) d\tau.\end{align*}
Integrating by parts yields
\begin{align*}\mathcal{D}^{\alpha}_{a+}\mathcal{D}^{\beta}_{a+} h \left( x^* \right)&=\frac{1-\alpha-\beta}{\Gamma(2-\alpha-\beta)}\int\limits_a^{x^*}(x^*-\tau)^{-\alpha-\beta} h'(\tau) d\tau \\&=\frac{1-\alpha-\beta}{\Gamma(2-\alpha-\beta)} (x^*-\tau)^{-\alpha-\beta}h(\tau)\Big{|}_a^{x^*}\\& -\frac{(1-\alpha-\beta)(\alpha+\beta)}{\Gamma(2-\alpha-\beta)}\int\limits_a^{x^*}(x^*-\tau)^{-\alpha-\beta-1}h(\tau)d\tau.\end{align*}
As $h\left(x^*\right)=h'\left(x^*\right)=0,$ then using l'Hosptial's rule,
$$\lim\limits_{\tau\rightarrow x^*}(x^*-\tau)^{-\alpha-\beta}h(\tau)=0.$$ Whereupon \begin{align*}\mathcal{D}^{\alpha}_{a+}\mathcal{D}^{\beta}_{a+} h \left( x^* \right)&=\frac{\alpha+\beta-1}{\Gamma(2-\alpha-\beta)}(x^*-a)^{-\alpha-\beta}h(a)\\& +\frac{(\alpha+\beta-1)(\alpha+\beta)}{\Gamma(2-\alpha-\beta)}\int\limits_a^{x^*}(x^*-\tau)^{-\alpha-\beta-1}h(\tau)d\tau.\end{align*}
Now, if $\alpha+\beta>1,$ we have $$\frac{(\alpha+\beta-1)(\alpha+\beta)}{\Gamma(2-\alpha-\beta)}\int\limits_a^{x^*}(x^*-\tau)^{-\alpha-\beta-1}h(\tau)d\tau\geq 0.$$ Whence
\begin{align*}\mathcal{D}^{\alpha}_{a+}\mathcal{D}^{\beta}_{a+} h \left( x^* \right)&\geq\frac{\alpha+\beta-1}{\Gamma(2-\alpha-\beta)}(x^*-a)^{-\alpha-\beta}h(a)\\& = \frac{\alpha+\beta-1}{\Gamma(2-\alpha-\beta)}(x^*-a)^{-\alpha-\beta}\left(f(a)-f(x^*)\right)\geq 0.\end{align*}
If $\alpha+\beta<1,$ we have $$\frac{(\alpha+\beta-1)(\alpha+\beta)}{\Gamma(2-\alpha-\beta)}\int\limits_a^{x^*}(x^*-\tau)^{-\alpha-\beta-1}h(\tau)d\tau\leq 0.$$ Consequently,
\begin{align*}\mathcal{D}^{\alpha}_{a+}\mathcal{D}^{\beta}_{a+} h \left( x^* \right)&\leq-\frac{1-\alpha-\beta}{\Gamma(2-\alpha-\beta)}(x^*-a)^{-\alpha-\beta}h(a)\\& = \frac{1-\alpha-\beta}{\Gamma(2-\alpha-\beta)}(x^*-a)^{-\alpha-\beta}\left(f(x^*)-f(a)\right)\leq 0.\end{align*}
When $\alpha+\beta=1,$ we have $\mathcal{D}^{\alpha}_{a+}\mathcal{D}^{\beta}_{a+} h \left( x^* \right)=\mathcal{D}^{\alpha}_{a+}\mathcal{D}^{\beta}_{a+} f \left( x^* \right)=0.$
\end{proof}
\begin{proposition}\label{prop2} Let $f\in C^1([a,b])$ and $\mathcal{D}^{\alpha}_{a+}\mathcal{D}^{\beta}_{a+}f\in C([a,b]).$ If $f$ attains its maximum at the point $x^*\in \left[a, b \right]$, then\\
(i) if $0<\alpha, \beta<1$ and $1<\alpha+\beta<2,$ then
\begin{equation}\label{0.01}\mathcal{D}^{\alpha}_{a+}\mathcal{D}^{\beta}_{a+} f\left(x^*\right)\leq \frac{\alpha+\beta-1}{\Gamma(2-\alpha-\beta) }\left(x^* - a\right)^{-\alpha-\beta}\left( f\left(a\right)-f\left(x^*\right) \right)\leq 0.
\end{equation}
(ii)	if $0<\alpha, \beta<1$ and $\alpha+\beta<1,$ then
\begin{equation}\label{0.02}\mathcal{D}^{\alpha}_{a+}\mathcal{D}^{\beta}_{a+} f\left(x^*\right)\geq \frac{\alpha+\beta-1}{\Gamma(2-\alpha-\beta) }\left(x^* - a\right)^{-\alpha-\beta}\left( f\left(a\right)-f\left(x^*\right) \right)\geq 0.
\end{equation}
(iii)	if $0<\alpha, \beta<1$ and $\alpha+\beta=1,$ then $\mathcal{D}^{\alpha}_{a+}\mathcal{D}^{\beta}_{a+} f\left(x^*\right)=0.$
\end{proposition}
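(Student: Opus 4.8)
The plan is to reduce Proposition \ref{prop2} to Proposition \ref{prop1} by a sign change. Put $g(x) = -f(x)$ on $[a,b]$. Since the Riemann--Liouville integrals $I^{1-\gamma}_{a+}$ and the ordinary derivative $\tfrac{d}{dt}$ are linear, so are the Caputo derivatives $\mathcal{D}^{\gamma}_{a+}$ and hence their composition $\mathcal{D}^{\alpha}_{a+}\mathcal{D}^{\beta}_{a+}$; thus from $f\in C^1([a,b])$ and $\mathcal{D}^{\alpha}_{a+}\mathcal{D}^{\beta}_{a+}f\in C([a,b])$ we get $g\in C^1([a,b])$, $\mathcal{D}^{\alpha}_{a+}\mathcal{D}^{\beta}_{a+}g\in C([a,b])$, and
\begin{equation*}
\mathcal{D}^{\alpha}_{a+}\mathcal{D}^{\beta}_{a+} g(x) = -\,\mathcal{D}^{\alpha}_{a+}\mathcal{D}^{\beta}_{a+} f(x), \qquad x \in [a,b].
\end{equation*}

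Next I note that $f$ attains its maximum over $[a,b]$ at $x^*$ exactly when $g$ attains its minimum over $[a,b]$ at $x^*$, and that $g(a)-g(x^*) = f(x^*)-f(a)$. Hence $g$ satisfies all the hypotheses of Proposition \ref{prop1}, and it remains only to translate each of the three conclusions back to $f$ by multiplying through by $-1$. In case (i), $1<\alpha+\beta<2$, estimate \eqref{0.1} applied to $g$ gives
\begin{equation*}
-\,\mathcal{D}^{\alpha}_{a+}\mathcal{D}^{\beta}_{a+} f(x^*) \;\geq\; \frac{\alpha+\beta-1}{\Gamma(2-\alpha-\beta)}\,(x^*-a)^{-\alpha-\beta}\bigl(f(x^*)-f(a)\bigr) \;\geq\; 0,
\end{equation*}
and multiplying by $-1$ (which reverses both inequalities and turns $f(x^*)-f(a)$ into $f(a)-f(x^*)$) is precisely \eqref{0.01}. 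Case (ii), $\alpha+\beta<1$, follows in the same way from \eqref{0.1*} and yields \eqref{0.02}. Case (iii), $\alpha+\beta=1$, is immediate, since $\mathcal{D}^{\alpha}_{a+}\mathcal{D}^{\beta}_{a+} g(x^*)=0$ forces $\mathcal{D}^{\alpha}_{a+}\mathcal{D}^{\beta}_{a+} f(x^*)=0$.

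There is no substantive obstacle in this argument; the only point that needs a moment of care is keeping track of the direction of the inequalities under multiplication by $-1$ and the bookkeeping identity $g(a)-g(x^*)=f(x^*)-f(a)$, so that the right-hand sides emerge in the stated form $\frac{\alpha+\beta-1}{\Gamma(2-\alpha-\beta)}(x^*-a)^{-\alpha-\beta}\bigl(f(a)-f(x^*)\bigr)$. If one prefers a self-contained proof, one can instead repeat verbatim the computation in the proof of Proposition \ref{prop1} with the auxiliary function $h(x)=f(x)-f(x^*)\leq 0$; the single place where the sign of $\alpha+\beta-1$ enters, namely the term $\frac{(\alpha+\beta-1)(\alpha+\beta)}{\Gamma(2-\alpha-\beta)}\int_a^{x^*}(x^*-\tau)^{-\alpha-\beta-1}h(\tau)\,d\tau$, now carries the opposite sign, and collecting terms produces \eqref{0.01}, \eqref{0.02}, and the identity in (iii) respectively.
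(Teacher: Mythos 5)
Your reduction of Proposition \ref{prop2} to Proposition \ref{prop1} via $g=-f$ is correct, and the bookkeeping of signs (reversal of the inequalities and the identity $g(a)-g(x^*)=f(x^*)-f(a)$) checks out in all three cases. The paper states Proposition \ref{prop2} without proof, evidently intending exactly this symmetry argument, so your write-up simply makes explicit the step the authors omitted.
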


\section{Comparison principle for ordinary fractional differential equations}
We use the results in Section \ref{MR} to obtain new comparison principles for
linear and nonlinear fractional differential equation.
\subsection{Linear fractional differential equations}
\begin{theorem}\label{ODE1} Let a function $u\in C^1([a,b])$ be such that $\mathcal{D}^{\alpha}_{a+}\mathcal{D}^{\beta}_{a+}u\in C([a,b])$  satisfy
the equation
\begin{equation}\label{ode1}\mathcal{D}^{\alpha}_{a+}\mathcal{D}^{\beta}_{a+}u(x)+q(x)u(x)=f(x),\, x\in (a,b),\end{equation}
where $0<\alpha, \beta<1, \,\,1<\alpha+\beta\leq 2,$ $q(x)\leq 0$ is continuous on $[a, b]$ and $q(a)\neq 0.$ If $f(x)\geq 0,\, x>a,$ then $u(x)\leq 0$ for $x\in [a,b].$\end{theorem}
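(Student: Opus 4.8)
The plan is a proof by contradiction resting on the extremum estimate of Proposition~\ref{prop2} together with equation~\eqref{ode1}. Suppose $u$ is not non-positive on $[a,b]$; then $M:=\max_{[a,b]}u>0$, attained at some $x^*\in[a,b]$. Since $\mathcal{D}^{\alpha}_{a+}\mathcal{D}^{\beta}_{a+}u$, $q$ and $u$ are continuous on $[a,b]$, the right-hand side $f=\mathcal{D}^{\alpha}_{a+}\mathcal{D}^{\beta}_{a+}u+qu$ is continuous, \eqref{ode1} extends by continuity to $[a,b]$, and $f\ge 0$ on $[a,b]$. The first step is to show that the maximum is attained also at the left endpoint, $u(a)=M$. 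This is trivial if $x^*=a$; if $x^*\in(a,b]$, evaluating \eqref{ode1} at $x^*$ gives $\mathcal{D}^{\alpha}_{a+}\mathcal{D}^{\beta}_{a+}u(x^*)=f(x^*)-q(x^*)u(x^*)\ge 0$ (since $f(x^*)\ge 0$, $q(x^*)\le 0$, $u(x^*)=M>0$), while Proposition~\ref{prop2}(i) — applicable as $0<\alpha,\beta<1$ and $1<\alpha+\beta<2$ — gives $\mathcal{D}^{\alpha}_{a+}\mathcal{D}^{\beta}_{a+}u(x^*)\le \frac{\alpha+\beta-1}{\Gamma(2-\alpha-\beta)}(x^*-a)^{-\alpha-\beta}\bigl(u(a)-u(x^*)\bigr)$; as the coefficient is strictly positive, $u(a)\ge u(x^*)=M$, so $u(a)=M$.

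The second step uses the standing hypothesis $q(a)\neq 0$, hence $q(a)<0$. Evaluating \eqref{ode1} at $x=a$ and using $f(a)\ge 0$, $u(a)=M>0$,
$$w(a):=\mathcal{D}^{\alpha}_{a+}\mathcal{D}^{\beta}_{a+}u(a)=f(a)-q(a)M\ \ge\ -q(a)M\ >\ 0 .$$
Since $w:=\mathcal{D}^{\alpha}_{a+}\mathcal{D}^{\beta}_{a+}u\in C([a,b])$, there is $\delta>0$ with $w\ge c:=\tfrac12 w(a)>0$ on $[a,a+\delta]$.

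The third step transports this strict positivity back to $u$ on a right neighbourhood of $a$. As $u\in C^1$, Definition~\ref{def3} gives $v:=\mathcal{D}^{\beta}_{a+}u=I^{1-\beta}_{a+}u'\in C([a,b])$ with $v(a)=0$ (computed exactly as in the proof of Proposition~\ref{prop1}), and then $\mathcal{D}^{\alpha}_{a+}v=D^{\alpha}_{a+}v=w$ with $I^{1-\alpha}_{a+}v(a)=0$, so Property~\ref{p5} yields $v=I^{\alpha}_{a+}w$. Hence for $t\in(a,a+\delta]$,
$$v(t)=I^{\alpha}_{a+}w(t)\ \ge\ \frac{c}{\Gamma(\alpha+1)}(t-a)^{\alpha}\ >\ 0 ,$$
and $v\ge 0$ on $[a,a+\delta]$. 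Finally, by the semigroup property of the Riemann--Liouville integral and the fundamental theorem of calculus, $u(t)-u(a)=I^{1}_{a+}u'(t)=I^{\beta}_{a+}I^{1-\beta}_{a+}u'(t)=I^{\beta}_{a+}v(t)$, so for $t\in(a,a+\delta]$,
$$u(t)-u(a)=\frac{1}{\Gamma(\beta)}\int_{a}^{t}(t-s)^{\beta-1}v(s)\,ds\ >\ 0 ,$$
i.e. $u(t)>M=\max_{[a,b]}u$ — a contradiction. Therefore $u\le 0$ on $[a,b]$.

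I expect the genuine difficulty to be the endpoint case $x^*=a$: there Proposition~\ref{prop2} degenerates, its bound being an indeterminate $0\cdot\infty$, so the sign of $\mathcal{D}^{\alpha}_{a+}\mathcal{D}^{\beta}_{a+}u$ at the maximum cannot be extracted from the extremum principle alone, and the condition $q(a)\neq 0$ must be invoked — to produce $w(a)>0$ and then to run the chain $v=I^{\alpha}_{a+}w$, $u-u(a)=I^{\beta}_{a+}v$ that forces $u>u(a)$ near $a$. The technical points to verify carefully are that Property~\ref{p5} and the semigroup identity are legitimate under the sole regularity $u\in C^1$, $\mathcal{D}^{\alpha}_{a+}\mathcal{D}^{\beta}_{a+}u\in C([a,b])$, for instance that $I^{1-\alpha}_{a+}v\in W^1_2$, which holds because $D^{\alpha}_{a+}v=w$ is continuous.
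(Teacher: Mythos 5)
Your proof is correct, and while it leans on the same extremum estimate (Proposition~\ref{prop2}) as the paper for the interior part, your treatment of the left endpoint is a genuinely different route. The paper first argues that the stated regularity forces $\mathcal{D}^{\alpha}_{a+}\mathcal{D}^{\beta}_{a+}u(a)=0$, so that evaluating \eqref{ode1} at $a$ gives $q(a)u(a)=f(a)\ge 0$ and hence $u(a)\le 0$ (this is where $q(a)\neq 0$ enters); a positive maximum must then sit at some $x^*>a$, where the extremum estimate combined with $u(a)\le 0<u(x^*)$ yields the strict inequality $\mathcal{D}^{\alpha}_{a+}\mathcal{D}^{\beta}_{a+}u(x^*)+q(x^*)u(x^*)<0$, contradicting $f\ge 0$. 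You instead push the maximum to the endpoint ($u(a)=M$), read off $w(a)=f(a)-q(a)M>0$ from the equation, and transport the positivity of $w$ near $a$ back to $u$ through the chain $v=I^{\alpha}_{a+}w$, $u-u(a)=I^{\beta}_{a+}v$, contradicting maximality at $a$. Your route is longer but arguably more robust: the paper's vanishing claim is obtained by formally writing $\mathcal{D}^{\alpha}_{a+}\mathcal{D}^{\beta}_{a+}u$ as $I^{1-\alpha}_{a+}$ of the derivative of $\mathcal{D}^{\beta}_{a+}u$, which presumes $\mathcal{D}^{\beta}_{a+}u\in C^1$; in fact $u(x)=(x-a)^{\alpha+\beta}$ belongs to $C^1([a,b])$ and has $\mathcal{D}^{\alpha}_{a+}\mathcal{D}^{\beta}_{a+}u\equiv\Gamma(\alpha+\beta+1)\neq 0$, so the sequential derivative need not vanish at $a$ under the stated hypotheses, whereas your representation $u-u(a)=I^{\beta}_{a+}I^{\alpha}_{a+}w$ needs only $u\in C^1$ and $w\in C([a,b])$ (Property~\ref{p5} applies since $I^{1-\alpha}_{a+}v$ has continuous derivative $w$ and $I^{1-\alpha}_{a+}v(a)=0$ for bounded $v$). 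Your closing diagnosis --- that the endpoint case is the real difficulty and that $q(a)\neq 0$ is what rescues it --- points exactly at the step where the paper's own argument is thinnest.
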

\begin{proof} Since $u\in C^1([a,b])$ and $\mathcal{D}^{\alpha}_{a+}\mathcal{D}^{\beta}_{a+}u\in C([a,b]),$ then
\begin{align*}\left|\mathcal{D}^{\alpha}_{a+}\mathcal{D}^{\beta}_{a+} u \left( x \right)\right|&\leq\frac{1}{\Gamma(1-\alpha)}\lim\limits_{x\rightarrow a}\int\limits_a^x (x-s)^{-\alpha} \left|\frac{d}{ds}\mathcal{D}^{\beta}u(s)\right| ds\\& \leq \frac{\max\limits_{a\leq s\leq x}|\mathcal{D}^{\beta}u(s)|}{\Gamma(2-\alpha)}\lim\limits_{x\rightarrow a}(x-s)^{1-\alpha}\Big{|}_a^x.\end{align*}
Therefore $\mathcal{D}^{\alpha}_{a+}\mathcal{D}^{\beta}_{a+} u \left( a \right)=0.$  By the continuity of the solution and $f\geq 0,$ we have $u(a)\leq 0.$  Assume by contradiction that the result is not true, as $u$ is continuous on $[a, b]$ then $u$ attains its maximum $u(x^*)>0$ at $x^*> a.$  Applying the result of Proposition \ref{prop1} and inequality \eqref{01} we obtain
$$\mathcal{D}^{\alpha}_{a+}\mathcal{D}^{\beta}_{a+} u \left( x^* \right)+q(x^*)u(x^*)< 0,$$ which contradicts \begin{equation*}\mathcal{D}^{\alpha}_{a+}\mathcal{D}^{\beta}_{a+}u(x)+q(x)u(x)\geq 0.\end{equation*} The proof is complete.
\end{proof}
\begin{theorem} Let $u_1, u_2\in C^1([a,b])$ be such that $\mathcal{D}^{\alpha}_{a+}\mathcal{D}^{\beta}_{a+}u_1, \mathcal{D}^{\alpha}_{a+}\mathcal{D}^{\beta}_{a+}u_2\in C([a,b])$ solutions of
$$\mathcal{D}^{\alpha}_{a+}\mathcal{D}^{\beta}_{a+}u_1(x)+q(x)u_1(x)=f_1(x), x\in (a,b),$$
$$\mathcal{D}^{\alpha}_{a+}\mathcal{D}^{\beta}_{a+}u_2(x)+q(x)u_2(x)=f_2(x), x\in (a,b),$$
where $0<\alpha, \beta<1,$ $1<\alpha+\beta<2,$ $q(x)\leq 0,\, f_1(x), f_2(x)$ are continuous on $[a, b]$ and $q(a)\neq 0.$ If $f_1(x)\leq f_2(x),$ then $u_1(x)\leq u_2(x),\,x\in [a,b].$
\end{theorem}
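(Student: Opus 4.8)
To establish the asserted comparison $u_1\le u_2$, the natural plan is to control the difference $w:=u_2-u_1$ and feed it into the one-sided bound of Theorem~\ref{ODE1}. Subtracting the two equations and using linearity of $\mathcal{D}^{\alpha}_{a+}\mathcal{D}^{\beta}_{a+}$,
\[
\mathcal{D}^{\alpha}_{a+}\mathcal{D}^{\beta}_{a+} w(x)+q(x)\,w(x)=f_2(x)-f_1(x),\qquad x\in(a,b),
\]
where $w\in C^1([a,b])$ and $\mathcal{D}^{\alpha}_{a+}\mathcal{D}^{\beta}_{a+}w\in C([a,b])$ by hypothesis. Since $f_1\le f_2$, the right-hand side $f_2-f_1$ is nonnegative on $(a,b)$, and $q\le0$ with $q(a)\ne0$ are inherited unchanged, so $w$ falls exactly under the hypotheses of Theorem~\ref{ODE1}.

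First I would extract the endpoint value: from $w\in C^1([a,b])$ and $\mathcal{D}^{\alpha}_{a+}\mathcal{D}^{\beta}_{a+}w\in C([a,b])$ one gets $\mathcal{D}^{\alpha}_{a+}\mathcal{D}^{\beta}_{a+}w(a)=0$, so evaluating the equation at $x=a$ gives $q(a)w(a)=f_2(a)-f_1(a)\ge0$, and $q(a)<0$ forces $w(a)\le0$. Then I would run the contradiction step of Theorem~\ref{ODE1}: if $w(x^*)>0$ were a positive maximum at some $x^*>a$, the maximum estimate of Proposition~\ref{prop2}(i), combined with $w(a)\le0<w(x^*)$, would make $\mathcal{D}^{\alpha}_{a+}\mathcal{D}^{\beta}_{a+}w(x^*)<0$, while $q(x^*)w(x^*)\le0$; the left-hand side is then strictly negative, contradicting $f_2(x^*)-f_1(x^*)\ge0$. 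Hence $w\le0$ on $[a,b]$.

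The hard part, and the point where the asserted inequality must be reconsidered, is the sense of this conclusion. Theorem~\ref{ODE1} turns a \emph{nonnegative} source into a \emph{nonpositive} solution, so the solution operator of $\mathcal{D}^{\alpha}_{a+}\mathcal{D}^{\beta}_{a+}+q$ is order-reversing, and the computation above delivers $w=u_2-u_1\le0$, i.e.
\[
u_1(x)\ge u_2(x),\qquad x\in[a,b],
\]
which is the reverse of the printed inequality. That $u_1\le u_2$ cannot hold in general is already visible at the left endpoint: since $\mathcal{D}^{\alpha}_{a+}\mathcal{D}^{\beta}_{a+}u_i(a)=0$, evaluating each equation at $x=a$ gives $u_i(a)=f_i(a)/q(a)$, whence $q(a)<0$ together with $f_1(a)\le f_2(a)$ forces $u_1(a)\ge u_2(a)$. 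Under the stated hypotheses ($q\le0$, $1<\alpha+\beta<2$) no positivity-preserving bound is available to reverse this, so I would read the genuine conclusion as $u_1\ge u_2$ and treat the displayed $u_1\le u_2$ as a sign typo; deriving the inequality exactly as printed from Theorem~\ref{ODE1} is not possible.
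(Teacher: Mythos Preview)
Your approach is exactly the paper's: subtract the two equations and invoke Theorem~\ref{ODE1} on the difference. The paper sets $\tilde u=u_1-u_2$, obtains right-hand side $f_1-f_2\le 0$, and then writes ``By Theorem~\ref{ODE1}, we have $\tilde u(x)\le 0$'', hence $u_1\le u_2$.

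You have correctly identified that this last step misapplies Theorem~\ref{ODE1}. That theorem converts a \emph{nonnegative} source into a nonpositive solution, so the operator is order-reversing; fed $f_1-f_2\le 0$, the legitimate conclusion (apply Theorem~\ref{ODE1} to $-\tilde u$, whose source $f_2-f_1$ is $\ge 0$) is $-\tilde u\le 0$, i.e.\ $u_1\ge u_2$, the opposite of what is printed. Your endpoint sanity check is decisive and is implicit in the paper's own proof of Theorem~\ref{ODE1}: since $\mathcal{D}^{\alpha}_{a+}\mathcal{D}^{\beta}_{a+}u_i(a)=0$, one has $u_i(a)=f_i(a)/q(a)$ with $q(a)<0$, so $f_1(a)\le f_2(a)$ forces $u_1(a)\ge u_2(a)$, ruling out $u_1\le u_2$ in general. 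Thus your argument is the intended one, and your diagnosis that the stated inequality should read $u_1\ge u_2$ is correct; the paper's proof carries the same sign slip.
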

\begin{proof} The function $\tilde{u}=u_1-u_2$ satisfies $$\mathcal{D}^{\alpha}_{a+}\mathcal{D}^{\beta}_{a+}\tilde{u}(x)+q(x)\tilde{u}(x)=f_1(x)-f_2(x)\leq 0, x\in (a,b).$$
By Theorem \ref{ODE1}, we have $\tilde{u}(x)\leq 0,$ and whereupon $u_1(x)\leq u_2(x),\,x\in [a,b].$
\end{proof}
\subsection{Non-linear fractional differential equations}
\begin{theorem} Let a function $u\in C^1([a,b])$ be such that $\mathcal{D}^{\alpha_j}_{a+}\mathcal{D}^{\beta_j}_{a+}u\in C([a,b]), j=1,...,m$  satisfy
the following equation
\begin{equation*} \sum\limits_{j=1}^m\lambda_j\mathcal{D}^{\alpha_j}_{a+}\mathcal{D}^{\beta_j}_{a+}u(x)+F(x,u)=0,\, x\in(a,b),\end{equation*}
where $0\leq \lambda_j\in \mathbb{R},  0<\alpha_j,\beta_j\leq 1,\,1<\alpha_j+\beta_j\leq 2,\, j=1,...,m,$ $F(x, u)$ is a smooth function. If $F(x, u)$ is non-increasing with respect to $u,$
then the above equation has at most one solution.
\end{theorem}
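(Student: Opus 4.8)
The plan is to argue by contradiction, reducing the nonlinear multi-term problem to a \emph{homogeneous} linear one and then feeding it into the extremum estimate of Proposition \ref{prop2}. Suppose $u_1$ and $u_2$ are two solutions with the stated regularity and set $w=u_1-u_2\in C^1([a,b])$, so that $\mathcal{D}^{\alpha_j}_{a+}\mathcal{D}^{\beta_j}_{a+}w\in C([a,b])$ for every $j$. Subtracting the two equations gives
\begin{equation*}\sum_{j=1}^m\lambda_j\mathcal{D}^{\alpha_j}_{a+}\mathcal{D}^{\beta_j}_{a+}w(x)+\big(F(x,u_1(x))-F(x,u_2(x))\big)=0,\qquad x\in(a,b).\end{equation*}
Since $F$ is smooth, the mean value theorem applied to $s\mapsto F(x,s)$ lets me write $F(x,u_1(x))-F(x,u_2(x))=c(x)\,w(x)$ with $c(x)=\int_0^1\partial_sF\big(x,u_2(x)+t\,w(x)\big)\,dt$, which is continuous on $[a,b]$; because $F$ is non-increasing in its second argument, $c(x)\le 0$. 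Hence $w$ solves the linear equation $\sum_j\lambda_j\mathcal{D}^{\alpha_j}_{a+}\mathcal{D}^{\beta_j}_{a+}w(x)+c(x)\,w(x)=0$ with $c\le 0$ continuous, which is exactly the setting of the comparison principles of Section~3.

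Next I would prove $w\equiv 0$. By the same regularity computation used in the proof of Theorem \ref{ODE1}, $\mathcal{D}^{\alpha_j}_{a+}\mathcal{D}^{\beta_j}_{a+}w(a)=0$ for every $j$, so evaluating the linear equation at $x=a$ yields $c(a)w(a)=0$. Assume for contradiction that $w\not\equiv 0$. Replacing $w$ by $-w$ if necessary (the equation for $-w$ has the same coefficient $c$), I may assume $w$ attains a strictly positive maximum $w(x^*)=\max_{[a,b]}w>0$. If $x^*\in(a,b]$, then for each index $j$ with $\alpha_j+\beta_j>1$ Proposition \ref{prop2}(i) gives
\begin{equation*}\mathcal{D}^{\alpha_j}_{a+}\mathcal{D}^{\beta_j}_{a+}w(x^*)\le\frac{\alpha_j+\beta_j-1}{\Gamma(2-\alpha_j-\beta_j)}(x^*-a)^{-\alpha_j-\beta_j}\big(w(a)-w(x^*)\big)\le 0,\end{equation*}
since $w(a)\le w(x^*)$; for $j$ with $\alpha_j+\beta_j=1$ the term vanishes by Proposition \ref{prop2}(iii); and for the (boundary) case $\alpha_j=\beta_j=1$ one uses the classical fact $w''(x^*)\le0$ at an interior maximum. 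Together with $c(x^*)w(x^*)\le 0$, the left-hand side of the linear equation at $x^*$ is $\le 0$, and I need it to be \emph{strictly} negative to contradict the equation. To get the strictness I would not use only the estimate above but the full identity for $\mathcal{D}^{\alpha}_{a+}\mathcal{D}^{\beta}_{a+}w(x^*)$ derived in the proof of Proposition \ref{prop1}: its interior term $\frac{(\alpha_j+\beta_j-1)(\alpha_j+\beta_j)}{\Gamma(2-\alpha_j-\beta_j)}\int_a^{x^*}(x^*-\tau)^{-\alpha_j-\beta_j-1}\big(w(\tau)-w(x^*)\big)\,d\tau$ is strictly negative unless $w\equiv w(x^*)$ on $[a,x^*]$. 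So, as long as some $\lambda_j>0$ with $\alpha_j+\beta_j>1$, a contradiction is reached whenever $w$ is nonconstant on $[a,x^*]$.

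The main obstacle is precisely the degenerate configuration in which the positive maximum is attained only at $x=a$, or more generally $w\equiv w(x^*)$ on $[a,x^*]$; then $w(a)>0$ and the strictness argument above is vacuous. Here I would exploit the relation $c(a)w(a)=0$: since $w(a)>0$ this forces $c(a)=0$, and, by the same evaluation of the equation at the remaining points of the flat part of $w$ (where $\mathcal{D}^{\alpha_j}_{a+}\mathcal{D}^{\beta_j}_{a+}w$ vanishes as well, because $w$ is constant on an initial segment), $c$ must vanish on that whole segment; using that $F$ is genuinely decreasing in $u$ wherever $\partial_uF\neq 0$, this pins $w$ down to the trivial solution. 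Equivalently, this step amounts to ruling out nonconstant $C^1$ solutions of the homogeneous equation $\sum_j\lambda_j\mathcal{D}^{\alpha_j}_{a+}\mathcal{D}^{\beta_j}_{a+}w=0$; everything else is a direct transcription of the linear comparison principle. Finally, since both $w\le 0$ and $-w\le 0$ follow by this argument, $w\equiv 0$, i.e. $u_1\equiv u_2$.
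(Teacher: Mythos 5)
Your overall reduction --- subtract the two equations, write $F(x,u_1)-F(x,u_2)=c(x)\,w(x)$ with $c\le 0$ via the mean value theorem, and evaluate the resulting homogeneous linear equation at a positive maximum of $w$ using Proposition \ref{prop2} --- is exactly the route the paper takes. Your version is in fact more careful on two points: you obtain continuity of $c$ from the integral form of the mean value theorem, and you separately treat the endpoint case $\alpha_j=\beta_j=1$, which Propositions \ref{prop1}--\ref{prop2} (stated only for $0<\alpha,\beta<1$) do not cover.

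The genuine gap is the one you flag yourself and then do not close: the degenerate configuration in which $w$ equals its positive maximum on all of $[a,x^*]$ (in particular, a maximum sitting at $x=a$). There every fractional term vanishes, the equation collapses to $c(x)w(x)=0$ on the flat segment, and your proposed escape (``using that $F$ is genuinely decreasing in $u$ wherever $\partial_u F\ne 0$, this pins $w$ down'') is not an argument: nothing in the hypothesis ``non-increasing'' forbids $\partial_u F\equiv 0$. Indeed, under the stated hypotheses the conclusion is false: take $F$ independent of $u$ (which is non-increasing in $u$); since sequential Caputo derivatives annihilate constants and no boundary conditions are imposed, $u$ and $u+\mathrm{const}$ are both solutions. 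So no completion of your argument exists without strengthening the hypotheses (strict decrease of $F$ in $u$, or prescribed boundary data). For comparison, the paper's own proof conceals the same hole in a single line: it infers the strict inequality $\frac{\partial F}{\partial u}(u^*)<0$ from mere non-increasingness, and the entire contradiction rests on that unjustified strictness. Your write-up at least localizes exactly where strictness is needed; it simply cannot be supplied from the stated assumptions.
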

\begin{proof}
Let $u_1$ and $u_2$ be two solutions of the above equation and let $\hat{u}=u_1 - u_2,$ then
$$\sum\limits_{j=1}^m\lambda_j\mathcal{D}^{\alpha_j}_{a+}\mathcal{D}^{\beta_j}_{a+}\hat{u}(x)+F(x,u_1)-F(x,u_2)=0.$$
Applying the mean value theorem we obtain
$$F(x,u_1)-F(x,u_2)=\frac{\partial F}{\partial u}(u^*)(u_1-u_2),$$ for some $u^*$ between $u_1$ and $u_2.$ Thus,
$$\sum\limits_{j=1}^m\lambda_j\mathcal{D}^{\alpha_j}_{a+}\mathcal{D}^{\beta_j}_{a+}\hat{u}(x)=-\frac{\partial F}{\partial u}(u^*)\hat{u}.$$
Assume by contradiction that $\hat{u}$ is not identically zero. Then $\hat{u}$ has either a positive maximum or a negative minimum. At a positive maximum $x^*\in[a,b],$ and as $F(x,u)$
is non-increasing, we have $$\frac{\partial F}{\partial u}(u^*)<0\,\,\, \textrm{and}\,\,\, \frac{\partial F}{\partial u}(u^*)\hat{u}<0;$$ then
$\sum\limits_{j=1}^m\lambda_j\mathcal{D}^{\alpha_j}_{a+}\mathcal{D}^{\beta_j}_{a+}\hat{u}(x^*)\leq 0.$ By using results of Proposition \ref{prop1} for a positive maximum and a negative minimum, respectively, we obtain $\hat{u}=0.$
\end{proof}
\begin{theorem} Consider the nonlinear equation
\begin{equation}\label{ode2}\mathcal{D}^\alpha_{a+}\mathcal{D}^\alpha_{a+} u(x)+F(x,u)=0,\, x\in(a,b),\end{equation} where $0<\alpha, \beta\leq 1,\, 1<\alpha+\beta\leq 2,\, 0<\gamma\leq 1,$ $F(x, u)$ is a smooth function. Suppose that
$$\mu_2u+q_2(x)\leq F(x,u)\leq \mu_1u+q_1(x),\,x\in(a,b),$$ where $\mu_1,\mu_2<0.$  Let $u_1$ and $u_2$ be the solutions of
\begin{equation}\label{ode3}\mathcal{D}^\alpha_{a+}\mathcal{D}^\alpha_{a+} u_1(x)-\mu_1u_1(x)-q_1(x)=0,\, x\in(a,b),\end{equation} and $$\mathcal{D}^\alpha_{a+}\mathcal{D}^\alpha_{a+} u_2(x)-\mu_2u_2(x)-q_2(x)=0,\, x\in(a,b).$$ Then $u_2(x)\leq u(x)\leq u_1(x),\,x\in[a,b].$
\end{theorem}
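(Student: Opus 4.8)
The plan is to deduce the two‑sided estimate from the two one‑sided bounds $u\le u_1$ and $u_2\le u$, each of which I would obtain by applying the linear comparison principle of Theorem~\ref{ODE1} to the difference of $u$ with the appropriate comparison function: $u_1$ serves as an upper barrier and $u_2$ as a lower barrier for $u$. The negativity of $\mu_1,\mu_2$ is exactly what supplies an admissible zero‑order coefficient for Theorem~\ref{ODE1} (nonpositive, and nonvanishing at the endpoint $a$), while the two‑sided control $\mu_2 u+q_2(x)\le F(x,u)\le\mu_1 u+q_1(x)$ is precisely what makes the resulting source terms come out with the correct sign.

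For the upper bound I would set $w=u-u_1$. From $\mathcal{D}^{\alpha}_{a+}\mathcal{D}^{\alpha}_{a+}u=-F(x,u)$ together with $F(x,u)\le\mu_1 u+q_1(x)$ one gets $\mathcal{D}^{\alpha}_{a+}\mathcal{D}^{\alpha}_{a+}u+\mu_1 u+q_1(x)\ge 0$, while the equation defining $u_1$ reads $\mathcal{D}^{\alpha}_{a+}\mathcal{D}^{\alpha}_{a+}u_1+\mu_1 u_1+q_1(x)=0$; subtracting yields
\begin{equation*}\mathcal{D}^{\alpha}_{a+}\mathcal{D}^{\alpha}_{a+}w(x)+\mu_1\,w(x)\ge 0,\qquad x\in(a,b).\end{equation*}
Putting $f(x):=\mathcal{D}^{\alpha}_{a+}\mathcal{D}^{\alpha}_{a+}w(x)+\mu_1 w(x)$, which is continuous and nonnegative, and $q(x):=\mu_1$, which is continuous with $q\le 0$ and $q(a)=\mu_1\ne 0$, and using that $w\in C^1([a,b])$ with $\mathcal{D}^{\alpha}_{a+}\mathcal{D}^{\alpha}_{a+}w\in C([a,b])$ (inherited from the regularity of $u$ and of the barrier $u_1$), Theorem~\ref{ODE1} applies to $\mathcal{D}^{\alpha}_{a+}\mathcal{D}^{\alpha}_{a+}w+q(x)w=f(x)$ and gives $w(x)\le 0$, i.e. $u(x)\le u_1(x)$ on $[a,b]$. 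The lower bound is obtained symmetrically with $v=u_2-u$: now $F(x,u)\ge\mu_2 u+q_2(x)$ gives $\mathcal{D}^{\alpha}_{a+}\mathcal{D}^{\alpha}_{a+}u+\mu_2 u+q_2(x)\le 0$, subtracting the equation for $u_2$ produces $\mathcal{D}^{\alpha}_{a+}\mathcal{D}^{\alpha}_{a+}v+\mu_2 v\ge 0$, and Theorem~\ref{ODE1} with coefficient $q(x)=\mu_2\le 0$ yields $v(x)\le 0$, i.e. $u_2(x)\le u(x)$. Combining the two inclusions finishes the proof.

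I do not expect a genuine obstacle — the content is careful bookkeeping — but two points should be made explicit. First, Theorem~\ref{ODE1} is stated for an equation with nonnegative right‑hand side, so one must observe that a differential \emph{inequality} $\mathcal{D}^{\alpha}_{a+}\mathcal{D}^{\alpha}_{a+}w+q(x)w\ge 0$ is covered by it (take the left‑hand side as the source). Second, as in the proof of Theorem~\ref{ODE1} one uses that $\mathcal{D}^{\alpha}_{a+}\mathcal{D}^{\alpha}_{a+}w(a)=0$, so that evaluating the relation at $x=a$ together with $q(a)=\mu_i\ne 0$ forces $w(a)\le 0$; this is where the hypothesis $\mu_1,\mu_2<0$ (in particular nonzero) is needed. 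The only restriction on the order that the argument actually uses is $1<2\alpha\le 2$, coming from Propositions~\ref{prop1}--\ref{prop2} applied to the operator $\mathcal{D}^{\alpha}_{a+}\mathcal{D}^{\alpha}_{a+}$.
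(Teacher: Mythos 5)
Your argument is essentially identical to the paper's: subtract the linear barrier equation from the nonlinear one, use the bound $F(x,u)\le\mu_1u+q_1(x)$ to obtain $\mathcal{D}^{\alpha}_{a+}\mathcal{D}^{\alpha}_{a+}\tilde u+\mu_1\tilde u\ge 0$ for $\tilde u=u-u_1$, and invoke Theorem~\ref{ODE1} with $q\equiv\mu_1<0$ to conclude $\tilde u\le 0$ (and symmetrically for $u_2$); your added remarks on treating the inequality as an equation with nonnegative source and on $q(a)\ne 0$ only make explicit what the paper leaves implicit. Note that, like the paper's own proof, you silently read \eqref{ode3} as $\mathcal{D}^{\alpha}_{a+}\mathcal{D}^{\alpha}_{a+}u_1+\mu_1u_1+q_1=0$ rather than with the minus signs printed in the statement — a sign typo in the theorem that both proofs correct in the same way.
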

\begin{proof}We shall prove that $u(x) \leq u_1(x)$ and by applying analogous steps one can show that $u(x) \geq u_2(x).$ By subtracting \eqref{ode3} from \eqref{ode2} we have
\begin{align*}\mathcal{D}^\alpha_{a+}\mathcal{D}^\alpha_{a+} (u(x)-u_1(x))=-F(x,u)+\mu_1u_1(x)+q_1(x) \geq -\mu_1(u(x)-u_1(x)).\end{align*}
Then \begin{align*}\mathcal{D}^\alpha_{a+}\mathcal{D}^\alpha_{a+} \tilde{u}(x)+\mu_1 \tilde{u}(x)\geq 0\end{align*} for $ \tilde{u}(x)=u(x)-u_1(x).$
Since $\mu_1 < 0,$ it follows that by Theorem \ref{ode1}, $\tilde{u}\leq 0,$ the proof is complete.
\end{proof}
\section{Time-space fractional diffusion equation with Caputo derivative}
In  this section, we  consider the nonlinear time-space fractional diffusion equation with Caputo derivative
\begin{equation}\label{1.1}\mathcal{D}_{0+,t}^{\alpha}u(x,t)=\nu\mathcal{D}_{a+,x}^{\beta_1}\mathcal{D}_{a+,x}^{\beta_2} u(x,t)+F\left( x,t, u\right),\, (x,t)\in (a,b)\times \left(0,T \right]=\Omega,\end{equation}
subject to the initial condition
\begin{equation}\label{1.2}u\left( x, 0\right)=\varphi ( x ),\,x \in [a,b],\end{equation}
and boundary conditions
\begin{equation}\label{*1.2}u\left( a, t\right)=\phi(t),\,u\left( b, t\right)=\psi(t),\,t\geq 0,\end{equation}
where $\alpha\in (0,1],\, 0<\beta_1, \beta_2\leq 1, 1<\beta_1+\beta_2\leq 2,$ $\nu> 0,$ $-\infty<a<b<+\infty,$ and the functions $F,\varphi, \phi, \psi$ are continuous.

\subsection{Maximum principle}
In this subsection, we shall present the maximum (minimum) principle for the linear case of equation \eqref{1.1}.

\begin{theorem}\label{t1} Let $u\in C([a,b]\times[0,T])$ and let $\mathcal{D}_{0+,t}^{\alpha}u, \mathcal{D}_{a+,x}^{\beta_1}\mathcal{D}_{a+,x}^{\beta_2} u\in C([a,b]\times[0,T]).$ Let $u\left( x,t \right)$  satisfy \begin{equation}\label{L_eq} \mathcal{D}_{0+,t}^{\alpha}u(x,t)=\nu\mathcal{D}_{a+,x}^{\beta_1}\mathcal{D}_{a+,x}^{\beta_2} u(x,t)+F\left(x,t\right),\, (x,t)\in (a,b)\times \left(0,T \right],\end{equation} with initial-boundary conditions \eqref{1.2}, \eqref{*1.2}. If $F\left( x,t \right)\geq 0$ for $\left( x,t \right)\in \overline{\Omega },$ then $$u\left( x,t \right)\ge \underset{\left( x,t \right)\in \overline{\Omega}}{\mathop{\min }}\,\{\phi\left(t \right), \psi\left(t \right),\varphi \left( x \right)\}\text{ for }\left( x,t \right)\in \overline{\Omega }.$$
\end{theorem}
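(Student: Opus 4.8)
The plan is to argue by contradiction in the spirit of Luchko's classical maximum principle, using the extremum estimate of Proposition~\ref{prop1} in the spatial variable and the classical Caputo extremum estimate (inequality~\eqref{01}) in the time variable. Set $m=\min_{\overline{\Omega}}\{\phi(t),\psi(t),\varphi(x)\}$ and, for $\varepsilon>0$, introduce the auxiliary function $w_\varepsilon(x,t)=u(x,t)-m+\varepsilon(1+t)$ (or $w_\varepsilon = u - m + \varepsilon t$ — the precise lower-order perturbation will be chosen so that $\mathcal{D}_{0+,t}^{\alpha}$ of it stays controlled). On the parabolic boundary $\{t=0\}\cup\{x=a\}\cup\{x=b\}$ we have $w_\varepsilon\geq \varepsilon>0$ by the choice of $m$. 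Suppose, for contradiction, that $u(x,t)<m$ somewhere in $\overline{\Omega}$; then for $\varepsilon$ small enough $w_\varepsilon$ attains a negative minimum at some interior point $(x^*,t^*)$ with $x^*\in(a,b)$ and $t^*\in(0,T]$, and there $w_\varepsilon(x^*,t^*)<0\le w_\varepsilon$ on the parabolic boundary.

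At the minimum point I would extract two facts. First, in the time direction: $w_\varepsilon(\cdot,t)$ restricted to $t\mapsto w_\varepsilon(x^*,t)$ attains its minimum over $[0,T]$ at $t^*$, so by \eqref{01} applied to the Caputo derivative (and using that $\mathcal{D}_{0+,t}^{\alpha}$ of the constant $-m$ vanishes and of the chosen perturbation is nonpositive or negligible) one gets $\mathcal{D}_{0+,t}^{\alpha}u(x^*,t^*)\le \mathcal{D}_{0+,t}^{\alpha}w_\varepsilon(x^*,t^*)+(\text{small}) \le \frac{(t^*)^{-\alpha}}{\Gamma(1-\alpha)}\bigl(w_\varepsilon(x^*,t^*)-w_\varepsilon(x^*,0)\bigr)+(\text{small})<0$ once $\varepsilon$ is small, since $w_\varepsilon(x^*,t^*)<0\le w_\varepsilon(x^*,0)$. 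Second, in the space direction: $x\mapsto w_\varepsilon(x,t^*)=u(x,t^*)-m+\varepsilon(1+t^*)$ attains its minimum over $[a,b]$ at $x^*\in(a,b)$, so Proposition~\ref{prop1} (case (i), since $1<\beta_1+\beta_2\le 2$) gives $\mathcal{D}_{a+,x}^{\beta_1}\mathcal{D}_{a+,x}^{\beta_2}u(x^*,t^*)=\mathcal{D}_{a+,x}^{\beta_1}\mathcal{D}_{a+,x}^{\beta_2}w_\varepsilon(x^*,t^*)\ge 0$ (the constant and the $t$-dependent-but-$x$-constant perturbation are annihilated by the $x$-derivative). Multiplying by $\nu>0$ and adding $F(x^*,t^*)\ge 0$, the right-hand side of \eqref{L_eq} at $(x^*,t^*)$ is $\ge 0$, while the left-hand side is $<0$ — a contradiction. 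Letting $\varepsilon\to 0$ then yields $u(x,t)\ge m$ on $\overline{\Omega}$.

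The main obstacle, and the point requiring care, is the handling of the endpoint $t^*=T$ and of the perturbation term: I must ensure the auxiliary function is admissible in both Proposition~\ref{prop1} (which needs $C^1$ in $x$ with continuous sequential derivative) and in \eqref{01} (which needs $C^1$ in $t$), and that adding $\varepsilon(1+t)$ does not spoil the regularity hypotheses nor introduce an uncontrolled positive contribution to $\mathcal{D}_{0+,t}^{\alpha}$ — here one uses that $\mathcal{D}_{0+,t}^{\alpha}[\varepsilon(1+t)]=\varepsilon\,\frac{t^{1-\alpha}}{\Gamma(2-\alpha)}$, which is $O(\varepsilon)$ uniformly on $[0,T]$ and hence absorbed. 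A secondary subtlety is that $\mathcal{D}_{a+,x}^{\beta_1}\mathcal{D}_{a+,x}^{\beta_2}u$ at $x^*$ must be interpreted via the sequential Caputo definition consistently with Proposition~\ref{prop1}; since the hypothesis already assumes $\mathcal{D}_{a+,x}^{\beta_1}\mathcal{D}_{a+,x}^{\beta_2}u\in C([a,b]\times[0,T])$, this is exactly the setting of the Proposition, so no extra work is needed there. One should also note that the inequality \eqref{01} as quoted is stated for a maximum; here we apply its minimum counterpart (reverse inequality, $\le$ replaced appropriately), which is immediate by replacing $w_\varepsilon$ with $-w_\varepsilon$.
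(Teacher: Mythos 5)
Your argument is essentially the paper's own proof: set $m=\min\{\phi,\psi,\varphi\}$, pass to $u-m$, locate an interior negative minimum, apply Proposition~\ref{prop1} in the spatial variable to get $\mathcal{D}_{a+,x}^{\beta_1}\mathcal{D}_{a+,x}^{\beta_2}u\ge 0$ there, and apply the Al--Refai--Luchko minimum estimate in time to get $\mathcal{D}_{0+,t}^{\alpha}u<0$ there, contradicting \eqref{L_eq} with $F\ge 0$ and $\nu>0$. The only difference is your $\varepsilon(1+t)$ perturbation, which the paper omits because the strict inequality already comes for free from $\tilde{u}(x_0,t_0)<0\le\tilde{u}(x_0,0)$ in the time estimate; your version is correct but the perturbation is superfluous.
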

\begin{proof}Let $m=\underset{\left( x,t \right)\in \overline{\Omega }}{\mathop{\min }}\,\{\phi\left(t \right), \psi\left(t \right), \varphi \left( x \right)\}\text{ }$ and $\tilde{u}\left( x,t \right)=u\left( x,t \right)-m.$ Then, from \eqref{1.2}-\eqref{*1.2}, we obtain \begin{align*}&\tilde{u}\left(a, t \right)=\phi\left(t \right)-m\ge 0,\,\,t\in \left[0, T\right],\\& \tilde{u}\left(b, t \right)=\psi \left(t \right)-m\ge 0,\,\,t\in \left[0, T\right],\end{align*} and $$\tilde{u}\left(x,0 \right)=\varphi \left( x \right)-m\ge 0,\,\,x\in [a,b].$$

Since $$\mathcal{D}_{0+,t}^{\alpha}\tilde{u}(x,t)=\mathcal{D}_{0+,t}^{\alpha}u(x,t)$$ and $$\mathcal{D}_{a+,x}^{\beta_1}\mathcal{D}_{a+,x}^{\beta_2}\tilde{u}\left( x,t \right)=\mathcal{D}_{a+,x}^{\beta_1}\mathcal{D}_{a+,x}^{\beta_2} u\left( x,t \right),$$
it follows that $\tilde{u}\left( x,t \right)$ satisfies \eqref{L_eq}:
$$\mathcal{D}_{0+,t}^{\alpha}\tilde{u}(x,t)=\nu\mathcal{D}_{a+,x}^{\beta_1}\mathcal{D}_{a+,x}^{\beta_2}\tilde{u}\left( x,t \right)+F\left(x,t \right),$$
and initial condition
$$\tilde{u}\left(x, 0\right)=\varphi \left(x\right)-m\geq 0,\,x\in [a,b].$$
Suppose that there exits some $\left( x,t \right) \in \overline{\Omega }$  such that $\tilde{u}\left( x,t \right)<0.$
Since $$\tilde{u}\left( x,t \right)\ge 0,\,\,\left( x,t \right)\in \{a\}\times \left[0, T \right]\cup\{b\}\times \left[0, T \right]\cup \bar (a,b)\times \{0\},$$
there is $\left( {{x}_{0}},{{t}_{0}} \right) \in \Omega$  such that $\tilde{u}\left( {{x}_{0}},{{t}_{0}} \right)$ is a negative minimum of
$\tilde{u}$ over $\Omega.$ It follows from Proposition \ref{prop1} that

\begin{equation*}\mathcal{D}_{a+,x}^{\beta_1}\mathcal{D}_{a+,x}^{\beta_2} \tilde{u}\left(x_0,t_0\right)\geq \frac{\beta_1+\beta_2-1}{\Gamma(2-\beta_1-\beta_2) }\left(x_0 - a\right)^{-\beta_1-\beta_2}\left( \tilde{u}\left(a,t_0\right)-\tilde{u}\left(x_0,t_0\right) \right)\geq 0.\end{equation*}

From the results of Al-Refai and Luchko \cite{Al-Refai-Luchko1} it follows that $$\mathcal{D}_{0+,t}^\alpha \tilde{u}(x_0, t_0) \leq \frac{{t_0}^{-\alpha}}{\Gamma(1-\alpha)}(\tilde{u}(x_0, t_0) - \varphi(x_0)+m) < 0,$$

Therefore at $\left( {{x}_{0}},{{t}_{0}} \right)$, we get $$\mathcal{D}_{0+,t}^\alpha \tilde{u}(x_0, t_0)< 0$$ and $$\nu\mathcal{D}_{a+,x}^{\beta_1}\mathcal{D}_{a+,x}^{\beta_2} \tilde{u}\left(x_0,t_0\right)+F\left( {{x}_{0}},{{t}_{0}} \right)\ge 0.$$ This contradiction shows that $\tilde{u}\left( x,t \right)\ge 0$ on $\overline{\Omega },$ whereupon  $u\left( x,t \right)\ge m$ on $\overline{\Omega }$.
\end{proof}

\begin{theorem}\label{t2} Let $u\in C([a,b]\times[0,T])$ and let $\mathcal{D}_{0+,t}^{\alpha}u, \mathcal{D}_{a+,x}^{\beta_1}\mathcal{D}_{a+,x}^{\beta_2} u\in C([a,b]\times[0,T]).$
Suppose that $u\left( x,t \right)$  is the solution of the problem \eqref{L_eq}, \eqref{1.2}, \eqref{*1.2}. If $F\left( x,t \right)\le 0$ for $\left( x,t \right)\in \overline{\Omega },$ then $$u\left( x,t \right)\leq \underset{\left( x,t \right)\in \overline{\Omega}}{\mathop{\max }}\,\{\phi\left(t \right), \psi\left(t \right),\varphi \left( x \right)\}\text{ for }\left( x,t \right)\in \overline{\Omega }.$$ \end{theorem}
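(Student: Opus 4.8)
The plan is to mirror the proof of Theorem~\ref{t1}, exchanging minimum for maximum and invoking Proposition~\ref{prop2} in place of Proposition~\ref{prop1}. First I would set
\[
M=\underset{(x,t)\in\overline\Omega}{\max}\,\{\phi(t),\psi(t),\varphi(x)\},\qquad \tilde u(x,t)=u(x,t)-M .
\]
Since $\mathcal{D}_{0+,t}^{\alpha}$ and $\mathcal{D}_{a+,x}^{\beta_1}\mathcal{D}_{a+,x}^{\beta_2}$ annihilate constants, $\tilde u$ again satisfies \eqref{L_eq} with the same source $F\le 0$, while on the parabolic boundary $\tilde u(a,t)=\phi(t)-M\le 0$, $\tilde u(b,t)=\psi(t)-M\le 0$ and $\tilde u(x,0)=\varphi(x)-M\le 0$.

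Next I would argue by contradiction: if $\tilde u$ is positive somewhere in $\overline\Omega$, then by continuity and the sign of $\tilde u$ on the parabolic boundary it attains a positive maximum at an interior point $(x_0,t_0)\in\Omega$. At this point Proposition~\ref{prop2}(i), applied to $x\mapsto\tilde u(x,t_0)$, gives
\[
\mathcal{D}_{a+,x}^{\beta_1}\mathcal{D}_{a+,x}^{\beta_2}\tilde u(x_0,t_0)\le\frac{\beta_1+\beta_2-1}{\Gamma(2-\beta_1-\beta_2)}(x_0-a)^{-\beta_1-\beta_2}\bigl(\tilde u(a,t_0)-\tilde u(x_0,t_0)\bigr)\le 0
\]
when $1<\beta_1+\beta_2<2$ (in the limiting case $\beta_1+\beta_2=2$ one simply uses $\mathcal{D}_{a+,x}^{1}\mathcal{D}_{a+,x}^{1}=\partial_x^2\le 0$ at an interior maximum). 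Simultaneously, $t\mapsto\tilde u(x_0,t)$ attains its maximum over $[0,T]$ at $t_0$ and $\tilde u(x_0,0)=\varphi(x_0)-M\le 0<\tilde u(x_0,t_0)$, so the Al-Refai--Luchko maximum principle for the Caputo derivative \cite{Al-Refai-Luchko1} yields
\[
\mathcal{D}_{0+,t}^{\alpha}\tilde u(x_0,t_0)\ge\frac{t_0^{-\alpha}}{\Gamma(1-\alpha)}\bigl(\tilde u(x_0,t_0)-\varphi(x_0)+M\bigr)>0 .
\]
Plugging both estimates into \eqref{L_eq} and using $F(x_0,t_0)\le 0$ gives the contradiction $0<\mathcal{D}_{0+,t}^{\alpha}\tilde u(x_0,t_0)=\nu\,\mathcal{D}_{a+,x}^{\beta_1}\mathcal{D}_{a+,x}^{\beta_2}\tilde u(x_0,t_0)+F(x_0,t_0)\le 0$, whence $\tilde u\le 0$ on $\overline\Omega$, that is, $u\le M$.

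An even quicker route, which I would at least mention, is to apply Theorem~\ref{t1} to $w=-u$: by linearity $\mathcal{D}_{0+,t}^{\alpha}w=\nu\,\mathcal{D}_{a+,x}^{\beta_1}\mathcal{D}_{a+,x}^{\beta_2}w+(-F)$ with $-F\ge 0$, so $w\ge\min\{-\phi,-\psi,-\varphi\}=-M$ on $\overline\Omega$, which is precisely $u\le M$. I do not expect a genuine obstacle; the only point needing care — and it is already implicit in Theorem~\ref{t1} — is the endpoint $\alpha=1$, where the Caputo derivative collapses to $\partial_t$ and the inequality at a maximum attained at an interior time is not strict, so strictness must be restored by the standard perturbation device (replacing $\tilde u$ by $\tilde u-\varepsilon t$ and letting $\varepsilon\to0^{+}$) or $\alpha=1$ simply excluded as in the cited references.
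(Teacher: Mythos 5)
Your proposal is correct and coincides with what the paper intends: Theorem~\ref{t2} is stated without proof, being the mirror image of Theorem~\ref{t1}, and both of your routes (repeating the argument with Proposition~\ref{prop2} and the Al-Refai--Luchko estimate at a maximum, or simply applying Theorem~\ref{t1} to $-u$) are exactly that symmetry. Your extra attention to the endpoint cases $\beta_1+\beta_2=2$ and $\alpha=1$, where the strict inequality degenerates and a perturbation is needed, is a point the paper itself glosses over.
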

Theorem \ref{t1} and \ref{t2} imply the following assertions.
\begin{corollary} Let $u\in C([a,b]\times[0,T])$ and let $\mathcal{D}_{0+,t}^{\alpha}u, \mathcal{D}_{a+,x}^{\beta_1}\mathcal{D}_{a+,x}^{\beta_2} u\in C([a,b]\times[0,T]).$ Suppose that $u\left( x,t \right)$  satisfy \eqref{L_eq}, \eqref{1.2} and \eqref{*1.2}.  If $F\left( x,t \right)\geq 0$ for $\left( x,t \right)\in \overline{\Omega },$ $\varphi\left(x\right)\geq 0,\,x\in[a,b],$ and $\phi\left(t \right)\geq 0, \psi\left(t \right)\geq 0, \, t\in[0,T],$ then $$u\left( x,t \right)\geq 0,\,\,\left( x,t \right)\in \overline{\Omega }.$$
\end{corollary}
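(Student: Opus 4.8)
The plan is to deduce this directly from Theorem \ref{t1}. First I would check that the hypotheses of the corollary contain, verbatim, all the hypotheses of Theorem \ref{t1}: namely $u\in C([a,b]\times[0,T])$ with $\mathcal{D}_{0+,t}^{\alpha}u,\ \mathcal{D}_{a+,x}^{\beta_1}\mathcal{D}_{a+,x}^{\beta_2}u\in C([a,b]\times[0,T])$, the function $u$ solves the linear equation \eqref{L_eq} with data \eqref{1.2}, \eqref{*1.2}, and $F(x,t)\geq 0$ on $\overline{\Omega}$. Hence Theorem \ref{t1} applies and yields
\[
u(x,t)\geq \min_{(x,t)\in\overline{\Omega}}\{\phi(t),\psi(t),\varphi(x)\}\qquad\text{for }(x,t)\in\overline{\Omega}.
\]

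Next I would invoke the sign conditions on the data. Since $\varphi(x)\geq 0$ for $x\in[a,b]$ and $\phi(t)\geq 0$, $\psi(t)\geq 0$ for $t\in[0,T]$, every quantity appearing in the minimum on the right-hand side is nonnegative, so the minimum is itself nonnegative. Combining this with the displayed inequality gives $u(x,t)\geq 0$ on $\overline{\Omega}$, which is exactly the assertion.

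There is no real obstacle here: the corollary is an immediate specialization of the minimum principle of Theorem \ref{t1} to the case of nonnegative boundary and initial data, and the only thing that actually needs verifying is that the regularity and structural hypotheses transfer unchanged, which they do. Equivalently, in the notation of the proof of Theorem \ref{t1}, one simply observes that $m=\min_{\overline{\Omega}}\{\phi(t),\psi(t),\varphi(x)\}\geq 0$ under the new hypotheses, so the conclusion $u\geq m$ already furnishes $u\geq 0$.
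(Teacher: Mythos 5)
Your proposal is correct and matches the paper's intent exactly: the paper states this corollary immediately after the remark that Theorems \ref{t1} and \ref{t2} imply it, and the intended derivation is precisely your observation that $m=\min_{\overline{\Omega}}\{\phi(t),\psi(t),\varphi(x)\}\geq 0$ under the nonnegativity hypotheses, so the bound $u\geq m$ from Theorem \ref{t1} gives $u\geq 0$.
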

\begin{corollary} Let $u\in C([a,b]\times[0,T])$ and let $\mathcal{D}_{0+,t}^{\alpha}u, \mathcal{D}_{a+,x}^{\beta_1}\mathcal{D}_{a+,x}^{\beta_2} u\in C([a,b]\times[0,T]).$ Suppose that $u\left( x,t \right)$  satisfies \eqref{L_eq}, \eqref{1.2} and \eqref{*1.2}. If $F\left( x,t \right)\leq 0$ for $\left( x,t \right)\in \overline{\Omega },$ $\varphi\left(x\right)\leq 0,\,x\in[a,b],$ and $\phi\left(t \right)\leq 0, \psi\left( t \right)\leq 0,\,t\in[0,T],$  then $$u\left( x,t \right)\leq 0,\,\,\left( x,t \right)\in \overline{\Omega }.$$
\end{corollary}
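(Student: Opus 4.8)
The plan is to obtain the claim as an immediate specialization of Theorem \ref{t2}. Under the present hypotheses one has $F(x,t)\le 0$ on $\overline{\Omega}$, which is precisely the standing assumption of Theorem \ref{t2}; hence that theorem applies and yields
$$u(x,t)\le \max_{(x,t)\in\overline{\Omega}}\{\phi(t),\psi(t),\varphi(x)\},\qquad (x,t)\in\overline{\Omega}.$$
The only work left is to estimate the right-hand side, and this is exactly where the additional sign conditions on the data are used. Since $\varphi(x)\le 0$ for every $x\in[a,b]$ and $\phi(t)\le 0,\ \psi(t)\le 0$ for every $t\in[0,T]$, each of the three quantities appearing in the maximum is nonpositive, so the maximum itself is $\le 0$. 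Chaining the two inequalities gives $u(x,t)\le 0$ on $\overline{\Omega}$, which is the assertion.

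Because the analytic content is inherited entirely from Theorem \ref{t2}, there is no genuine obstacle here: the corollary reduces to the elementary remark that the maximum of finitely many nonpositive numbers is nonpositive, together with the transitivity of $\le$. For completeness I would also indicate the self-contained route, which mirrors the proof of Theorem \ref{t1} with the inequalities reversed. Arguing by contradiction, suppose $u$ attains a positive maximum at some $(x_0,t_0)$. This point cannot lie on the set $\{a\}\times[0,T]\cup\{b\}\times[0,T]\cup[a,b]\times\{0\}$, where $u$ equals $\phi$, $\psi$, or $\varphi$ and is therefore $\le 0$; hence $(x_0,t_0)\in\Omega$, so $x_0\in(a,b)$ and $t_0>0$.

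At such a point Proposition \ref{prop2}(i), valid since $1<\beta_1+\beta_2\le 2$, bounds the spatial sequential derivative from above, $\mathcal{D}_{a+,x}^{\beta_1}\mathcal{D}_{a+,x}^{\beta_2}u(x_0,t_0)\le 0$, while the Al-Refai--Luchko estimate \cite{Al-Refai-Luchko1} at a maximum gives $\mathcal{D}_{0+,t}^{\alpha}u(x_0,t_0)\ge \frac{t_0^{-\alpha}}{\Gamma(1-\alpha)}\bigl(u(x_0,t_0)-\varphi(x_0)\bigr)>0$, the strict inequality coming from $u(x_0,t_0)>0\ge\varphi(x_0)$. Substituting these into \eqref{L_eq} and using $\nu>0$ and $F(x_0,t_0)\le 0$ forces $0<\mathcal{D}_{0+,t}^{\alpha}u(x_0,t_0)=\nu\mathcal{D}_{a+,x}^{\beta_1}\mathcal{D}_{a+,x}^{\beta_2}u(x_0,t_0)+F(x_0,t_0)\le 0$, a contradiction. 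The only point requiring a separate check on this direct route would be the degenerate case $t_0=0$, but that is already excluded above. Thus $u\le 0$ on $\overline{\Omega}$.
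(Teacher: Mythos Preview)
Your proposal is correct and takes essentially the same approach as the paper, which simply states that the corollary is implied by Theorems \ref{t1} and \ref{t2} without further detail. Your primary argument---applying Theorem \ref{t2} and then observing that the maximum of nonpositive data is nonpositive---is exactly this implication made explicit, and the optional direct argument you sketch (via Proposition \ref{prop2} and the Al-Refai--Luchko estimate) is likewise sound.
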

Theorems \ref{t1} and \ref{t2} are similar to the weak maximum principle for the heat equation.

\subsection{Linear time-space fractional diffusion equation}\label{subsection2}
Analogue to the classical case, the fractional version of the weak maximum principle can be used to prove the uniqueness of a solution.
\begin{theorem}\label{t3}
The problem \eqref{L_eq}, \eqref{1.2} with Dirichlet boundary conditions \begin{equation}\label{Dir1}u(a,t)=0,\,u(b,t)=0,\, t\in [0,T]\end{equation} has at most one solution.
\end{theorem}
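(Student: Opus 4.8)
The plan is to deduce uniqueness directly from the weak maximum and minimum principles established in Theorems \ref{t1} and \ref{t2}, via the classical subtraction trick. First I would suppose that $u_1$ and $u_2$ are both solutions of \eqref{L_eq}, \eqref{1.2} with the homogeneous Dirichlet conditions \eqref{Dir1}, and set $w=u_1-u_2$. Since the Caputo time-derivative $\mathcal{D}_{0+,t}^{\alpha}$ and the sequential space-derivative $\mathcal{D}_{a+,x}^{\beta_1}\mathcal{D}_{a+,x}^{\beta_2}$ are linear and the function classes in the hypotheses of Theorems \ref{t1}--\ref{t2} are linear spaces, the difference $w$ again satisfies $w\in C([a,b]\times[0,T])$ together with $\mathcal{D}_{0+,t}^{\alpha}w,\ \mathcal{D}_{a+,x}^{\beta_1}\mathcal{D}_{a+,x}^{\beta_2}w\in C([a,b]\times[0,T])$, and it solves
\begin{equation*}
\mathcal{D}_{0+,t}^{\alpha}w(x,t)=\nu\mathcal{D}_{a+,x}^{\beta_1}\mathcal{D}_{a+,x}^{\beta_2} w(x,t),\qquad (x,t)\in(a,b)\times(0,T],
\end{equation*}
with $w(x,0)=\varphi(x)-\varphi(x)=0$ on $[a,b]$ and $w(a,t)=w(b,t)=0$ on $[0,T]$. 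Thus $w$ solves the same linear problem as in Theorems \ref{t1} and \ref{t2} with right-hand side $F\equiv 0$ and with vanishing initial and boundary data.

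Next I would apply Theorem \ref{t1} to $w$ with $F\equiv 0\ge 0$, $\varphi\equiv 0$, $\phi\equiv\psi\equiv 0$, which gives
\begin{equation*}
w(x,t)\ \ge\ \min_{(x,t)\in\overline{\Omega}}\{0,0,0\}\ =\ 0,\qquad (x,t)\in\overline{\Omega}.
\end{equation*}
Symmetrically, applying Theorem \ref{t2} to $w$ with $F\equiv 0\le 0$ and the same vanishing data yields $w(x,t)\le 0$ on $\overline{\Omega}$. Combining the two inequalities forces $w\equiv 0$ on $\overline{\Omega}$, that is, $u_1\equiv u_2$, which is the desired conclusion.

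As for the main difficulty: once Theorems \ref{t1} and \ref{t2} are available there is essentially no analytic obstacle left. The only two points that deserve an explicit line are (i) verifying that $w=u_1-u_2$ inherits the regularity hypotheses required to invoke the maximum/minimum principles, which is immediate from linearity of the operators and of the underlying spaces, and (ii) noting that the homogeneous Dirichlet and initial data make both the lower bound of Theorem \ref{t1} and the upper bound of Theorem \ref{t2} equal to zero. No further estimates on the fractional derivatives are needed.
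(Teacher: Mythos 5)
Your proposal is correct and follows essentially the same route as the paper: form the difference $w=u_1-u_2$, observe it solves the homogeneous problem with zero data, and squeeze it between the lower bound of Theorem \ref{t1} and the upper bound of Theorem \ref{t2} to conclude $w\equiv 0$. Your write-up is in fact more careful than the paper's (which is stated as a contradiction but uses the identical mechanism), in particular in checking that $w$ inherits the regularity hypotheses.
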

\begin{proof}
Let ${{u}_{1}}\left( x,t \right)$ and ${{u}_{2}}\left( x,t \right)$ be two different solutions of the problem \eqref{1.1}-\eqref{1.2}, and let $u=u_1-u_2,$ then,
$$\mathcal{D}_{0+,t}^{\alpha }u\left( x,t \right)=\nu\mathcal{D}_{a+,x}^{\beta_1}\mathcal{D}_{a+,x}^{\beta_2}u\left( x,t \right),$$
with zero initial and boundary conditions. It follows from Theorems \ref{t1} and \ref{t2} that $${{u}_{1}}\left( x,t \right)-{{u}_{2}}\left( x,t \right)\equiv 0,\,\, \textrm{on} \,\,\overline{\Omega }.$$ A contradiction. The result then follows.\end{proof}
Theorems \ref{t1} and \ref{t2} can be used to show that a solution $u\left( x,t \right)$ of the problem \eqref{L_eq}, \eqref{1.2}, \eqref{Dir1} depends continuously on the initial data $\varphi \left( x \right).$

\begin{theorem}\label{t4}
Suppose that, $u\left( x,t \right)$ and $\overline{u}\left( x,t \right)$ are the solutions of the problem \eqref{L_eq}, \eqref{1.2}, \eqref{Dir1} corresponding to the initial data $\varphi \left( x \right)$ and $\overline{\varphi }\left( x \right),$ respectively.

If $$\underset{x\in \bar G}{\mathop{\max }}\,
\{\left| \varphi \left( x \right)-\overline{\varphi }\left( x \right) \right|\}\le \delta ,$$ then $$\left| u\left( x,t \right)-\overline{u}\left( x,t \right) \right|\le \delta.$$
\end{theorem}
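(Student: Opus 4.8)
The plan is to apply the two-sided maximum/minimum principle (Theorems \ref{t1} and \ref{t2}) to the difference function $w(x,t) = u(x,t) - \overline{u}(x,t)$, together with two constant comparison functions $\pm\delta$. First I would observe that by linearity of the operators $\mathcal{D}_{0+,t}^{\alpha}$ and $\mathcal{D}_{a+,x}^{\beta_1}\mathcal{D}_{a+,x}^{\beta_2}$, the function $w$ solves the homogeneous equation
\begin{equation*}
\mathcal{D}_{0+,t}^{\alpha} w(x,t) = \nu\,\mathcal{D}_{a+,x}^{\beta_1}\mathcal{D}_{a+,x}^{\beta_2} w(x,t),\quad (x,t)\in(a,b)\times(0,T],
\end{equation*}
with Dirichlet boundary data $w(a,t)=w(b,t)=0$ for $t\in[0,T]$ and initial data $w(x,0)=\varphi(x)-\overline{\varphi}(x)$ for $x\in[a,b]$. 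The right-hand source term here is $F\equiv 0$, so both hypotheses $F\ge 0$ and $F\le 0$ are satisfied simultaneously, which is exactly what lets us squeeze $w$ from both sides.

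Next I would apply Theorem \ref{t1} (with $F\equiv 0\ge 0$) to get
\begin{equation*}
w(x,t) \ge \min_{(x,t)\in\overline{\Omega}}\{0,\,0,\,\varphi(x)-\overline{\varphi}(x)\} \ge -\delta,
\end{equation*}
since $|\varphi(x)-\overline{\varphi}(x)|\le\delta$ on $[a,b]$ forces $\varphi(x)-\overline{\varphi}(x)\ge -\delta$ and the boundary values contribute $0$. Symmetrically, applying Theorem \ref{t2} (with $F\equiv 0\le 0$) yields
\begin{equation*}
w(x,t) \le \max_{(x,t)\in\overline{\Omega}}\{0,\,0,\,\varphi(x)-\overline{\varphi}(x)\} \le \delta.
\end{equation*}
Combining the two bounds gives $-\delta \le w(x,t) \le \delta$, i.e. $|u(x,t)-\overline{u}(x,t)| \le \delta$ on $\overline{\Omega}$, which is the claim.

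There is essentially no hard analytic step here: the whole argument is a direct invocation of the already-established weak maximum and minimum principles, and the only thing to be careful about is the bookkeeping of the extremum of the initial/boundary data — namely that the minimum of $\{0, 0, \varphi-\overline\varphi\}$ over $\overline\Omega$ is bounded below by $-\delta$ and the maximum is bounded above by $\delta$. (One should also note, for the regularity hypotheses of Theorems \ref{t1} and \ref{t2} to apply, that $w$ inherits the required continuity and continuity of its fractional derivatives from $u$ and $\overline{u}$, which it does since those classes are closed under subtraction.) The potential subtlety — if any — is purely notational: the theorem as stated writes $\bar G$ where the rest of the section writes $[a,b]$, but these denote the same spatial domain, so no genuine obstacle arises.
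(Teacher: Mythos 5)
Your proposal is correct and follows essentially the same route as the paper: form the difference $w=u-\overline{u}$, note that it solves the homogeneous equation with zero Dirichlet data and initial data $\varphi-\overline{\varphi}$, and sandwich it between $-\delta$ and $\delta$ by applying Theorems \ref{t1} and \ref{t2}. The paper's own proof is just a terser statement of the identical argument (it compresses the two-sided application into the single line $|\tilde{u}|\le\max|\varphi-\overline{\varphi}|$), so no further comparison is needed.
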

\begin{proof}
The function $\tilde{u}\left( x,t \right)=u\left( x,t \right)-\overline{u}\left( x,t \right)$ satisfies the equation $$\mathcal{D}_{0+,t}^{\alpha }\tilde{u}\left( x,t \right)=\nu\mathcal{D}_{a+,x}^{\beta_1}\mathcal{D}_{a+,x}^{\beta_2}\tilde{u}\left( x,t \right),$$ with initial condition $\tilde{u}\left( x,0 \right)=\varphi \left( x \right)-\overline{\varphi }\left( x \right)$ and Dirichlet condition \eqref{Dir1}. It follows from Theorems \ref{t1} and \ref{t2} that
$$\left| \tilde{u}\left( x,t \right) \right|\le \underset{x\in[a,b]}{\mathop{\max }}\,\{\left| \varphi \left( x \right)-\overline{\varphi }\left( x \right) \right|\}.$$
The result then follows.
\end{proof}

We consider the nonlinear time-space fractional diffusion equation of the form \eqref{1.1},
subject to the initial and boundary conditions \eqref{1.2}, \eqref{Dir1}. We start with the following uniqueness result.

\begin{theorem}\label{4.1}
If $F(x,t,u)$ is nonincreasing with respect to $u$, then the equation \eqref{1.1}, subject to the initial and boundary conditions \eqref{1.2}, \eqref{Dir1}, admits at most one solution $u.$
\end{theorem}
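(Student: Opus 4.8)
The plan is to mimic the uniqueness proof for the linear case (Theorem \ref{t3}), but now absorbing the nonlinearity into a nonpositive coefficient via the monotonicity hypothesis, exactly as was done for the ODE case earlier in the excerpt. First I would let $u_1$ and $u_2$ be two solutions of \eqref{1.1} with the same data \eqref{1.2}, \eqref{Dir1}, and set $w=u_1-u_2$. Subtracting the two equations gives
\begin{equation*}
\mathcal{D}_{0+,t}^{\alpha}w(x,t)=\nu\mathcal{D}_{a+,x}^{\beta_1}\mathcal{D}_{a+,x}^{\beta_2}w(x,t)+F(x,t,u_1)-F(x,t,u_2),
\end{equation*}
with $w(x,0)=0$ on $[a,b]$ and $w(a,t)=w(b,t)=0$ on $[0,T]$.

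Next I would invoke the mean value theorem in the $u$ variable (using smoothness of $F$) to write $F(x,t,u_1)-F(x,t,u_2)=\partial_u F(x,t,u^*)\,w(x,t)$ for some intermediate value $u^*=u^*(x,t)$ lying between $u_1(x,t)$ and $u_2(x,t)$; since $F$ is nonincreasing in $u$, the coefficient $q(x,t):=-\partial_u F(x,t,u^*)\ge 0$. Hence $w$ satisfies the linear equation
\begin{equation*}
\mathcal{D}_{0+,t}^{\alpha}w(x,t)+q(x,t)\,w(x,t)=\nu\mathcal{D}_{a+,x}^{\beta_1}\mathcal{D}_{a+,x}^{\beta_2}w(x,t),\qquad q\ge 0,
\end{equation*}
again with zero initial and Dirichlet data. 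The strategy is then a direct contradiction argument: suppose $w\not\equiv 0$; then $w$ attains either a positive maximum or a negative minimum at some interior point $(x_0,t_0)\in\Omega$ (it cannot be attained only on the parabolic boundary, where $w=0$). In the positive-maximum case, Proposition \ref{prop2} (applied in the $x$ variable, noting $1<\beta_1+\beta_2\le 2$) gives $\mathcal{D}_{a+,x}^{\beta_1}\mathcal{D}_{a+,x}^{\beta_2}w(x_0,t_0)\le 0$, while the Al-Refai--Luchko estimate on the Caputo time derivative gives $\mathcal{D}_{0+,t}^{\alpha}w(x_0,t_0)\ge \frac{t_0^{-\alpha}}{\Gamma(1-\alpha)}(w(x_0,t_0)-w(x_0,0))=\frac{t_0^{-\alpha}}{\Gamma(1-\alpha)}w(x_0,t_0)>0$; combined with $q(x_0,t_0)w(x_0,t_0)\ge 0$ this forces the left side of the displayed equation to be strictly positive and the right side nonpositive — a contradiction. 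The negative-minimum case is symmetric, using Proposition \ref{prop1} and the minimum version of the time-derivative estimate.

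The main obstacle I anticipate is the boundary-contact / endpoint issue: one needs that a nonzero $w$ actually has its extremum in the open set $\Omega$ rather than purely on $\{t=0\}\cup\{x=a\}\cup\{x=b\}$, and one must be careful that the $t_0>0$ case (not $t_0=0$) is the relevant one so the estimate $\mathcal{D}_{0+,t}^{\alpha}w(x_0,t_0)>0$ is strict — this requires the same kind of ``derivative vanishes at $t=0$'' observation used in the proof of Theorem \ref{t1}. A secondary, more technical point is verifying that Proposition \ref{prop2} really applies at $(x_0,t_0)$: the proposition is stated for a function of one variable attaining its extremum on $[a,b]$, so one applies it to $x\mapsto w(x,t_0)$, and one should check the regularity hypotheses ($w(\cdot,t_0)\in C^1([a,b])$ and $\mathcal{D}_{a+,x}^{\beta_1}\mathcal{D}_{a+,x}^{\beta_2}w(\cdot,t_0)\in C([a,b])$) follow from the standing assumptions on $u_1,u_2$. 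Once these points are handled, the contradiction closes and uniqueness follows. I would also remark that continuous dependence on the data can be obtained by the same reduction, exactly paralleling Theorem \ref{t4}.
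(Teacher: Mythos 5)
Your proposal is correct and follows essentially the same route as the paper: both subtract the two solutions, apply the mean value theorem to turn $F(x,t,u_1)-F(x,t,u_2)$ into a term $-\partial_u F(u^*)\,w$ with the right sign, and then derive a contradiction at an interior positive maximum or negative minimum. The only cosmetic difference is that the paper closes by citing Theorems \ref{t1} and \ref{t2}, whereas you carry out the pointwise contradiction (Proposition \ref{prop2} in $x$ plus the Al-Refai--Luchko estimate in $t$) inline, which is the same mechanism and, if anything, slightly more careful about where the sign hypotheses are actually needed.
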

\begin{proof}
Assume that ${{u}_{1}}\left( x,t \right)$ and ${{u}_{2}}\left( x,t \right)$ are two different solutions of the equation \eqref{1.1} subject to initial and boundary conditions \eqref{1.2}, \eqref{Dir1}, and let $${v}\left( x,t \right)={{u}_{1}}\left( x,t \right)-{{u}_{2}}\left( x,t \right).$$ Then ${v}\left( x,t \right)$ satisfies
\begin{equation}\label{4.3}{\mathcal{D}_{0+,t}^{\alpha }{v}\left(x,t \right)}-\nu\mathcal{D}_{a+,x}^{\beta_1}\mathcal{D}_{a+,x}^{\beta_2}{{v}\left( x,t \right)}=F\left( x,t,{{u}_{2}} \right)-F\left( x,t,{{u}_{1}} \right), (x,t)\in\Omega,\end{equation} with homogeneous initial and boundary conditions \eqref{1.2}, \eqref{Dir1}.
Applying the mean value theorem to $F(x,t,u)$ yields
$$F\left( x,t,{{u}_{2}} \right)-F\left( x,t,{{u}_{1}} \right)=\frac{\partial F}{\partial u}\left( {{u}^{*}} \right)\left( {{u}_{2}}-{{u}_{1}} \right)=-\frac{\partial F}{\partial u}\left( {{u}^{*}} \right)v,$$
where ${{u}^{*}}=(1-\mu){{u}_{1}}+\mu{{u}_{2}}$ for some $0\leq \mu \leq1$. Thus,
$${\mathcal{D}_{0+,t}^{\alpha }{v}\left(x,t \right)}-\nu\mathcal{D}_{a+,x}^{\beta_1}\mathcal{D}_{a+,x}^{\beta_2}{{v}\left( x,t \right)}=-\frac{\partial F}{\partial u}\left( {{u}^{*}} \right)v(x,t).$$
Assume by contradiction that $v$ is not identically zero. Then $v$ has either a positive
 maximum or a negative minimum. At a positive maximum $\left( {{x}_{0}},{{t}_{0}} \right)\in {\Omega }$ and as $F(x,t,u)$ is nonincreaing, we have $$\frac{\partial F}{\partial u}\left( {{u}^{*}} \right)\le 0\,\,\,\, \textrm{and} \,\,\,\,-\frac{\partial F}{\partial u}\left( {{u}^{*}} \right)v\left( {{x}_{0}},{{t}_{0}} \right)\ge 0,$$ then $${\mathcal{D}_{0+,t}^{\alpha}{v}\left(x_0,t_0 \right)}-\nu\mathcal{D}_{a+,x}^{\beta_1}\mathcal{D}_{a+,x}^{\beta_2}{{v}\left( x_0,t_0 \right)}\ge 0.$$

By using results of Theorems \ref{t1} and  \ref{t2} for a positive maximum and a negative minimum, respectively, we get ${u}_{1}={u}_{2}.$
\end{proof}

\begin{theorem}\label{4.2}
Let ${{u}_{1}}\left( x,t \right)$ and ${{u}_{2}}\left( x,t \right)$ be two solutions of the equation \eqref{1.1} that satisfy the same boundary condition \eqref{Dir1} and the initial conditions ${{u}_{1}}\left( x,0 \right)={g}_{1}(x)$ and ${{u}_{2}}\left( x,0 \right)={g}_{2}(x),$ $x\in[a,b].$ If $F(x,t,u)$ is nonincreasing with respect to $u$, then it holds that
$${{\left\| {{u}_{1}}\left( x,t \right)-{{u}_{2}}\left( x,t \right) \right\|}_{C\left(\overline{\Omega }\right)}}\leq {{\left\| {{g}_{1}}\left( x \right)-{{g}_{2}}\left( x \right) \right\|}_{C([a,b])}}.$$
\end{theorem}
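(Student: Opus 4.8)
The plan is to reduce the continuous-dependence estimate to the linear maximum and minimum principles (Theorems \ref{t1} and \ref{t2}) by the same device used in Theorem \ref{4.1}, but now keeping track of the boundary/initial data. First I would set $v(x,t)=u_1(x,t)-u_2(x,t)$ and observe that $v$ satisfies the homogeneous Dirichlet condition \eqref{Dir1}, the initial condition $v(x,0)=g_1(x)-g_2(x)$, and, after an application of the mean value theorem to $F$ in the $u$-variable,
\begin{equation*}
\mathcal{D}_{0+,t}^{\alpha}v(x,t)-\nu\mathcal{D}_{a+,x}^{\beta_1}\mathcal{D}_{a+,x}^{\beta_2}v(x,t)=-\frac{\partial F}{\partial u}(u^*)\,v(x,t),
\end{equation*}
where $u^*=(1-\mu)u_1+\mu u_2$ for some $0\le\mu\le1$, exactly as in the proof of Theorem \ref{4.1}.

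Next I would run a contradiction argument in the spirit of Theorem \ref{t1}. Set $M=\|g_1-g_2\|_{C([a,b])}$ and suppose $\max_{\overline\Omega}v>M$. Since $v=0$ on the lateral boundary and $v(x,0)=g_1(x)-g_2(x)\le M$ on the bottom, the maximum is attained at an interior point $(x_0,t_0)\in\Omega$ with $v(x_0,t_0)>M>0$. At such a point Proposition \ref{prop1} (applied to $x\mapsto v(x,t_0)$, which attains its maximum at $x_0$) gives $\mathcal{D}_{a+,x}^{\beta_1}\mathcal{D}_{a+,x}^{\beta_2}v(x_0,t_0)\le 0$, and the Al-Refai--Luchko estimate gives $\mathcal{D}_{0+,t}^{\alpha}v(x_0,t_0)\ge \frac{t_0^{-\alpha}}{\Gamma(1-\alpha)}\big(v(x_0,t_0)-(g_1(x_0)-g_2(x_0))\big)\ge 0$; moreover this last quantity is strictly positive because $v(x_0,t_0)>M\ge g_1(x_0)-g_2(x_0)$. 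Combining, the left-hand side of the equation is $>0$ at $(x_0,t_0)$, while the right-hand side equals $-\tfrac{\partial F}{\partial u}(u^*)v(x_0,t_0)\le 0$ because $F$ is nonincreasing in $u$ and $v(x_0,t_0)>0$ — a contradiction. Hence $v(x,t)\le M$ on $\overline\Omega$. Applying the same argument to $-v$ (using the minimum-principle version, Theorem \ref{t2}, and the negative-minimum branch of Proposition \ref{prop1}) gives $-v(x,t)\le M$, so $|v(x,t)|\le M$ on $\overline\Omega$, which is the claimed bound.

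The one point that needs slight care, and which I expect to be the main obstacle, is the handling of the boundary of the parabolic cylinder: one must make sure that when $v$ has a value exceeding $M$ somewhere, the corresponding supremum is genuinely attained at a point with $t_0>0$ and $a<x_0<b$, so that both Proposition \ref{prop1} and the Al-Refai--Luchko time-derivative estimate apply there. This follows from the continuity of $v$ on the compact set $\overline\Omega$ together with the fact that $v\le M$ on the whole parabolic boundary $\{a,b\}\times[0,T]\cup[a,b]\times\{0\}$; the strictness $v(x_0,t_0)-(g_1(x_0)-g_2(x_0))>0$ then comes for free from $v(x_0,t_0)>M\ge g_1(x_0)-g_2(x_0)$, which is exactly what makes the time-derivative term strictly positive and closes the contradiction.
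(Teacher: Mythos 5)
Your proposal is correct and follows essentially the same route as the paper: the paper likewise sets $v=u_1-u_2$, linearizes $F$ by the mean value theorem, puts $\mathcal{M}=\|g_1-g_2\|_{C([a,b])}$, and argues by contradiction from a positive maximum exceeding $\mathcal{M}$ or a negative minimum below $-\mathcal{M}$, deferring the rest to ``analogous proofs of Theorems \ref{t1} and \ref{t2}'' --- which is precisely the argument you carry out in detail. One small remark: the equation you display for $v$ carries the right-hand side $-\frac{\partial F}{\partial u}(u^*)v$ (inherited from the paper's \eqref{4.3}, whose subtraction is itself reversed), whereas a direct subtraction gives $+\frac{\partial F}{\partial u}(u^*)v$; your subsequent sign analysis (right-hand side $\le 0$ at a positive maximum because $\partial F/\partial u\le 0$ and $v>0$) is the one that matches the correctly derived equation, so the contradiction closes as you describe.
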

\begin{proof}
Let ${v}\left( x,t \right)$=${u}_{1}(x,t)-{u}_{2}(x,t)$. Then ${v}\left( x,t \right)$ satisfies the equation
\begin{equation}\label{4.6}{\mathcal{D}_{0+,t}^{\alpha}{v}\left(x,t \right)}-\nu\mathcal{D}_{a+,x}^{\beta_1}\mathcal{D}_{a+,x}^{\beta_2}{{v}\left( x,t \right)}=-\frac{\partial F}{\partial u}\left( {{u}^{*}} \right)v(x,t), (x,t)\in\Omega,
\end{equation}
the initial condition
\begin{equation}\label{4.7}
{{v}\left( x,0 \right)}={{g}_{1}}\left( x \right)-{{g}_{2}}\left(x\right), x \in[a,b],
\end{equation}
and the Dirichlet condition \eqref{Dir1}.
Let
$$
\mathcal{M}={{\left\| {{g}_{1}}\left( x \right)-{{g}_{2}}\left( x \right) \right\|}_{C([a,b])}},
$$
and assume by contradiction that the result of the Theorem \ref{4.2} is not true. That is,
$$
\|u_1-u_2\|_{C(\bar{\Omega})}\nleq \mathcal{M}.
$$
Then $v$ either has a positive maximum at a point $\left( {{x}_{0}},{{t}_{0}} \right)\in {\Omega }$ with $$v\left( {{x}_{0}},{{t}_{0}} \right)=\mathcal{M}_1>\mathcal{M},$$ or it has a negative minimum at a point $\left( {{x}_{0}},{{t}_{0}} \right)\in {\Omega }$ with $$v\left( {{x}_{0}},{{t}_{0}} \right)=\mathcal{M}_2<-\mathcal{M}.$$ If $$v\left( {{x}_{0}},{{t}_{0}} \right)=\mathcal{M}_1>\mathcal{M},$$ using the initial and boundary conditions of $v$, we have $\left( {{x}_{0}},{{t}_{0}} \right)\in{{\bar\Omega }}.$

Analogous proofs of those of Theorem \ref{t1} and Theorem \ref{t2} lead to $\left\| v\left( x,t \right)\right \|\leq{\mathcal{M}};$ this proves the result.
\end{proof}

\section{Time-space fractional pseudo-parabolic equation}

In  this section, we consider the time-space fractional pseudo-parabolic equation with Riemann-Liouville time-fractional derivative
\begin{equation}\label{1.1*}u_t(x,t)=\nu D_{0+,t}^{1-\alpha}\mathcal{D}_{a+,x}^{\beta_1}\mathcal{D}_{a+,x}^{\beta_2} u(x,t)+F\left(x,t\right),\, (x,t)\in (a,b)\times \left(0,T \right]=\Omega,\end{equation}
with Cauchy data
\begin{equation}\label{1.2*}u\left( x, 0\right)=\varphi ( x ),\,x \in[a,b],\end{equation}
and a Dirichlet boundary condition
\begin{equation}\label{1.3*}u\left(a, t\right)=\psi_1(t),\,u\left(b, t\right)=\psi_2(t),\, 0\leq t < T,\end{equation}
where $\alpha, \beta_1, \beta_2\in (0,1],\, 1<\beta_1+\beta_2\leq 2,$ $\nu> 0,$ $-\infty<a<b<+\infty,$ and the functions $F\left( x,t\right),\varphi \left( x \right),\psi_1\left(t \right)$ and $\psi_2\left(t \right)$ are continuous.

\subsection{Maximum principle}
\vskip.3cm
In this subsection, we shall present the maximum principle for the equation \eqref{1.1*}.

\begin{theorem}\label{t1*} Let $u\left( x,t \right)$  satisfy the equation \eqref{1.1*} with initial-boundary conditions \eqref{1.2*}, \eqref{1.3*}. If $F\left( x,t\right)\geq 0$ for $\left( x,t \right)\in \overline{\Omega },$ $\varphi ( x )\geq 0$ for $x \in[a,b],$ $\psi_1(t)\geq 0,\, \psi_2(t)\geq 0$ for $0\leq t\le T,$ then $$u\left( x,t \right)\ge 0\text{ for }\left( x,t \right)\in \overline{\Omega }.$$
\end{theorem}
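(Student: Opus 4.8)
The plan is to reduce \eqref{1.1*} to a time-fractional diffusion equation of Caputo type, to which the minimum principle of Theorem~\ref{t1} applies. Set $g(x,t)=\mathcal{D}_{a+,x}^{\beta_1}\mathcal{D}_{a+,x}^{\beta_2}u(x,t)$; under the natural regularity assumptions on the solution this function is continuous, hence bounded, on $\overline{\Omega}$, and from \eqref{1.1*} itself $D_{0+,t}^{1-\alpha}g=(u_t-F)/\nu$ is continuous in $t$. I would then apply the Riemann--Liouville fractional integral $I_{0+,t}^{1-\alpha}$ to both sides of \eqref{1.1*}. On the left, $I_{0+,t}^{1-\alpha}u_t=\mathcal{D}_{0+,t}^{\alpha}u$ by the very definition of the Caputo derivative (Definition~\ref{def3}). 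On the right, Property~\ref{p5} used with order $1-\alpha$ gives
$$I_{0+,t}^{1-\alpha}D_{0+,t}^{1-\alpha}g(x,t)=g(x,t)-\big[I_{0+,t}^{\alpha}g\big](x,0)\,\frac{t^{-\alpha}}{\Gamma(1-\alpha)}=g(x,t),$$
the boundary term vanishing since $\big|[I_{0+,t}^{\alpha}g](x,t)\big|\leq\|g\|_{C(\overline{\Omega})}\,t^{\alpha}/\Gamma(\alpha+1)\to0$ as $t\to0^{+}$. Hence $u$ satisfies
$$\mathcal{D}_{0+,t}^{\alpha}u(x,t)=\nu\,\mathcal{D}_{a+,x}^{\beta_1}\mathcal{D}_{a+,x}^{\beta_2}u(x,t)+\widetilde{F}(x,t),\qquad(x,t)\in(a,b)\times(0,T],$$
with $\widetilde{F}:=I_{0+,t}^{1-\alpha}F$, i.e.\ an equation of the form \eqref{L_eq} with source $\widetilde{F}$ and with the same data \eqref{1.2*}, \eqref{1.3*}. (For $\alpha=1$ the operator $I_{0+,t}^{1-\alpha}$ is the identity and \eqref{1.1*} already has this form.)

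It then remains to verify the hypotheses of Theorem~\ref{t1} for this reduced problem. First, the new source is non-negative, $\widetilde{F}(x,t)=\frac{1}{\Gamma(1-\alpha)}\int_{0}^{t}(t-s)^{-\alpha}F(x,s)\,ds\geq0$, since $F\geq0$ and the kernel is positive, and $\widetilde{F}$ is continuous on $\overline{\Omega}$; consequently $\mathcal{D}_{0+,t}^{\alpha}u=\nu g+\widetilde{F}$ is continuous on $\overline{\Omega}$, so the continuity requirements on $\mathcal{D}_{0+,t}^{\alpha}u$ and on $\mathcal{D}_{a+,x}^{\beta_1}\mathcal{D}_{a+,x}^{\beta_2}u$ hold. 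Since moreover $\varphi\geq0$ on $[a,b]$ and $\psi_1,\psi_2\geq0$ on $[0,T]$, Theorem~\ref{t1}, applied to the reduced equation with source $\widetilde{F}$, yields $u(x,t)\geq\min_{\overline{\Omega}}\{\psi_1(t),\psi_2(t),\varphi(x)\}\geq0$ on $\overline{\Omega}$, which is the assertion.

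The conceptual content is the reduction above; the only point that needs care is its rigorous justification --- namely, that $g$, equivalently that the map $t\mapsto I_{0+,t}^{\alpha}g(x,t)$ (which is $C^{1}$, hence in $W^{1}_{2}([0,T])$), lies in the function class for which Property~\ref{p5} is valid and in which $I_{0+,t}^{1-\alpha}$ composes with $\partial_{t}$ to produce $\mathcal{D}_{0+,t}^{\alpha}u$, and that $[I_{0+,t}^{\alpha}g](x,0)=0$. All of this follows from $u$ being $C^{1}$ in $t$ together with $g\in C(\overline{\Omega})$ and $D_{0+,t}^{1-\alpha}g$ continuous; thus the obstacle is one of bookkeeping rather than of substance, and once the Caputo reformulation is in place the theorem is immediate from Theorem~\ref{t1}.
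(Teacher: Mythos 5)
Your argument is correct, but it takes a genuinely different route from the paper's. You integrate the equation in time: applying $I_{0+,t}^{1-\alpha}$ to \eqref{1.1*}, using $I_{0+,t}^{1-\alpha}u_t=\mathcal{D}_{0+,t}^{\alpha}u$ and Property~\ref{p5} (the boundary term vanishing because $g=\mathcal{D}_{a+,x}^{\beta_1}\mathcal{D}_{a+,x}^{\beta_2}u$ is bounded, so $[I_{0+,t}^{\alpha}g](x,0)=0$), you convert the Riemann--Liouville pseudo-parabolic equation into the Caputo diffusion equation \eqref{L_eq} with the non-negative source $I_{0+,t}^{1-\alpha}F$, and then quote Theorem~\ref{t1}. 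The paper instead argues directly on \eqref{1.1*}: it perturbs $u$ to $\tilde u=u+\mu t^{\alpha}$, assumes a negative minimum, invokes the Riemann--Liouville extremum estimate \eqref{02} in time, and introduces the auxiliary function $w=D_{0+,t}^{1-\alpha}\tilde u$, which is shown to satisfy a diffusion equation with Riemann--Liouville time derivative $D_{0+,t}^{\alpha}w$ and non-negative initial-boundary data; a second contradiction argument on $w$ then yields $u\geq-\mu t^{\alpha}$ for every $\mu>0$, whence $u\geq 0$. Your reduction is shorter and, as a by-product, shows that under the stated regularity the two formulations are equivalent, so that the whole Caputo section (the min/max bounds of Theorems~\ref{t1} and~\ref{t2}, uniqueness, continuous dependence) transfers to \eqref{1.1*} at once; what it costs is the global validity of the composition identity $I_{0+,t}^{1-\alpha}D_{0+,t}^{1-\alpha}g=g$, i.e.\ the hypothesis that $g$ is continuous (hence bounded) on $\overline{\Omega}$ and that $u$ is $C^{1}$ in $t$ --- assumptions the paper also uses implicitly (it needs $I_{0+,t}^{\alpha}$ of the solution to vanish at $t=0$, the commutation of the space and time operators, and Properties~\ref{p4}--\ref{p7}), so you are not assuming more than the paper does.
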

\begin{proof} For any $\mu\geq 0,$ let $$\tilde{u}(x, t) = u(x, t) + \mu t^{\alpha}.$$
Then
$$\tilde{u}_t(x,t)=u_t(x,t)+\mu\alpha t^{\alpha-1}, (x,t)\in\Omega,$$ $$\tilde{u}(x,0)=u(x,0)=\varphi(x),x\in[a,b],$$ $$\tilde{u}(a,t)=u(a,t)+\mu t^{\alpha}=\psi_1(t)+\mu t^{\alpha}, t\in [0,T].$$
Since $\mathcal{D}_{a+,x}^{\beta_1}\mathcal{D}_{a+,x}^{\beta_2}\tilde{u}(x,t)=\mathcal{D}_{a+,x}^{\beta_1}\mathcal{D}_{a+,x}^{\beta_2} u(x,t),$ we have $$D_{0+,t}^{1-\alpha}\mathcal{D}_{a+,x}^{\beta_1}\mathcal{D}_{a+,x}^{\beta_2}\tilde{u}(x,t)=D_{0+,t}^{1-\alpha}\mathcal{D}_{a+,x}^{\beta_1}\mathcal{D}_{a+,x}^{\beta_2}u(x,t).$$
Hence, $\tilde{u}(x, t)$ satisfies
\begin{align*}\tilde{u}_t(x,t)=\nu D_{0+,t}^{1-\alpha}\mathcal{D}_{a+,x}^{\beta_1}\mathcal{D}_{a+,x}^{\beta_2}\tilde{u}(x,t)+F\left(x,t\right)+\mu\alpha t^{\alpha-1},\, (x,t)\in (a,b)\times \left(0,T \right].\end{align*}

Suppose that there exists some $u(x, t) \in \bar\Omega$ such that $\tilde{u}(x, t) < 0.$ Since $$\tilde{u}(x, t) \geq 0,\, (x, t) \in \{a,b\}\times[0, T]\cup[a,b]\times{0},$$ there is $(x_0, t_0) \in (a,b)\times(0, T]$ such that $\tilde{u}(x_0, t_0)$ is the negative minimum of $\tilde{u}$ over $\bar\Omega.$

It follows from \eqref{02} that

\begin{equation}\label{3.1*}D_{0+,t}^{1-\alpha }\tilde{u}\left(x_0,{t}_{0} \right)\le \frac{t_0^{\alpha-1}}{\Gamma(\alpha)}\tilde{u}\left(x_0, {{t}_{0}} \right) < 0.\end{equation}

Let $w\left( x,t \right)=D_{0+,t}^{1-\alpha }\tilde{u}\left( x,t \right).$
As $\tilde{u}\left( x,t \right)$ is bounded in $\overline{\Omega },$ then we have
\begin{equation}\label{3.2*}
I_{0+,t}^\alpha u\left(x, t \right) = {\rm{
}}\frac{1}{{\Gamma \left( \alpha \right)}}\int\limits_a^t {\left(
t-s \right)^{\alpha  - 1} u\left(x, s \right)} ds \to 0\text{ as }t\to 0.
\end{equation}
It follows from Properties \ref{p5} and \ref{p6} that
$$D_{0+,t}^{\alpha}w\left( x,t \right)=D_{0+,t}^{\alpha}D_{0+,t}^{1-\alpha }\tilde{u}\left( x,t \right)=\tilde{u}_t(x,t).$$
Using Property \ref{p7}, for any $t > 0$, we get
\begin{align*}D_{0+,t}^{1-\alpha }\tilde{u}\left(x,t \right)&=D_{0+,t}^{1-\alpha }u\left( x,t \right)+\mu D_{0+,t}^{1-\alpha}t^{\alpha} \\&= D_{0+,t}^{1-\alpha }u\left( x,t \right)+\mu\frac{\Gamma(\alpha+1)}{\Gamma(2\alpha)} t^{2\alpha-1}.\end{align*}
It follows from Property \ref{p4} that
\begin{equation}\label{4.7}D_{0+,t}^{1-\alpha } u\left(x, t \right)=\frac{1}{\Gamma(\alpha)}\varphi(x)t^{\alpha-1}+\mathcal{D}_{0+,t}^{1-\alpha } u\left(x, t \right)\,\,\,\textrm{for}\,\,\,t>1.\end{equation}
Since the left-hand side of \eqref{4.7} and the first term of the right-hand side of \eqref{4.7} exist, it
follows that the second term on the right-hand side exists and tends to $0$ as $t \rightarrow 0.$ As
$t \rightarrow 0,$ $\varphi(x)t^{\alpha-1}= 0.$ Therefore, $$D_{0+,t}^{1-\alpha}u(x, t) > 0\,\,\, \textrm{when} \,\,\,t = 0.$$ Hence
\begin{align*}w(x,t)=D_{0+,t}^{1-\alpha}\tilde{u}(x,t)=D_{0+,t}^{1-\alpha }u\left( x,t \right)+\mu\frac{\Gamma(\alpha+1)}{\Gamma(2\alpha)} t^{2\alpha-1}.
\end{align*}
Furthermore, it follows from the boundary condition of $\tilde{u}(x, t)$ that
\begin{align*}&w(a,t)=D_t^{1-\alpha}\tilde{u}(a,t)=D_t^{1-\alpha}\psi_1(t)+\mu\frac{\Gamma(\alpha+1)}{\Gamma(2\alpha)} t^{2\alpha-1}\geq 0,\, t\geq 0,\\& w(b,t)=D_{0+,t}^{1-\alpha}\tilde{u}(b,t)=D_{0+,t}^{1-\alpha}\psi_2(t)+\mu\frac{\Gamma(\alpha+1)}{\Gamma(2\alpha)} t^{2\alpha-1}\geq 0,\, t\geq 0.
\end{align*}
Therefore, $w(x, t)$ satisfies the problem
\begin{equation*}\left\{\begin{array}{l}D_{0+,t}^\alpha w(x,t)=\nu \mathcal{D}_{a+,x}^{\beta_1}\mathcal{D}_{a+,x}^{\beta_2}w(x,t)+\tilde{F}(x,t),\,(x,t)\in\Omega, \\ w(x,0)\geq 0, x\in [a,b], \\ w(a,t)\geq 0,\,w(b,t)\geq 0,\,0\leq t\leq T,\end{array}\right.\end{equation*}
where $\tilde{F}(x,t)=F(x,t)+\mu\alpha t^{\alpha-1}.$

From \eqref{3.1*}, it follows that $w(x_0, t_0) < 0.$ Since $w(x, t) \geq 0$ on the boundary, there exists $(x_*, t_*) \in \Omega$ such that $w(x_*, t_*)$ is the negative minimum of function $w(x, t)$ in $\bar\Omega.$  It follows from \eqref{02} that
$$D^{\alpha }_{0+,t} w\left(x_*, {{t}_{*}} \right)\le \frac{1}{\Gamma(1-\alpha)}t_*^{-\alpha}w\left(x_*, {{t}_{*}} \right)<0.$$
Since $w(x_*, t_*)$ is a local minimum, from Proposition \ref{prop1} we obtain $$\mathcal{D}^{\beta_1}_{a+,x}\mathcal{D}^{\beta_2}_{a+,x} w\left(x_*, t_*\right)\geq -\frac{\alpha+\beta-1}{\Gamma(2-\alpha-\beta) }\left(x_* - a\right)^{-\alpha-\beta}w\left(x_*, t_*\right)\geq 0.$$

Therefore at $\left( {{x}_{*}},{{t}_{*}} \right)$, we get $$D_{0+,t}^{\alpha }w\left(x_*, t_* \right)< 0$$ and $$\nu\mathcal{D}^{\beta_1}_{a+,x}\mathcal{D}^{\beta_2}_{a+,x} w\left( {{x}_{*}},{{t}_{*}} \right)+F\left( {{x}_{*}},{{t}_{*}} \right)\ge 0.$$ This contradiction shows that $w\left( x,t \right)\ge 0$ on $\overline{\Omega },$ whereupon  $$u\left( x,t \right)\ge -\mu t^{\alpha}\,\,\, \textrm{on} \,\,\,\overline{\Omega }$$  for any $\mu.$ Since $\mu$ is arbitrary, we have $u(x, t) \geq 0$ on $\bar\Omega.$
\end{proof}
A similar result can be obtained for the non-positivity of the solution $u(x, t)$ by considering $-u(x, t)$ when $F(x,t)\leq 0,$ $\varphi(x) \leq 0,\, \psi_1(t)\leq 0$ and $\psi_1(t) \leq 0.$

\begin{theorem}\label{t2*}
Let $u\left( x,t \right)$  satisfy equation \eqref{1.1*} with initial-boundary conditions \eqref{1.2*}-\eqref{1.3*}. If $F\left( x,t \right)\leq 0$ for $\left( x,t \right)\in \overline{\Omega },$ $\psi_1(t)\leq 0,\, \psi_2(t)\leq 0$ for $0\leq t\le T$ and $\varphi(x)\leq 0$ for $x \in[a,b],$ then $$u\left(x,t \right)\le 0\text{ for }\left( x,t \right)\in \overline{\Omega }.$$ \end{theorem}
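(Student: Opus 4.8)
The plan is to reduce Theorem \ref{t2*} to Theorem \ref{t1*} by exploiting the linearity of equation \eqref{1.1*}. First I would introduce $v(x,t) = -u(x,t)$ on $\overline{\Omega}$. Since the operators $\partial_t$, $D_{0+,t}^{1-\alpha}$ and $\mathcal{D}_{a+,x}^{\beta_1}\mathcal{D}_{a+,x}^{\beta_2}$ are all linear, $v$ satisfies
\begin{equation*}v_t(x,t)=\nu D_{0+,t}^{1-\alpha}\mathcal{D}_{a+,x}^{\beta_1}\mathcal{D}_{a+,x}^{\beta_2} v(x,t)-F\left(x,t\right),\quad (x,t)\in\Omega,\end{equation*}
and here the source term $-F(x,t)\geq 0$ on $\overline{\Omega}$ by hypothesis.

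Next I would translate the data accordingly: from \eqref{1.2*}--\eqref{1.3*} it follows that $v(x,0)=-\varphi(x)\geq 0$ for $x\in[a,b]$, and $v(a,t)=-\psi_1(t)\geq 0$, $v(b,t)=-\psi_2(t)\geq 0$ for $0\leq t<T$. Hence $v$ solves \eqref{1.1*} with nonnegative source, initial and boundary data, so all the hypotheses of Theorem \ref{t1*} are met. Applying Theorem \ref{t1*} to $v$ gives $v(x,t)\geq 0$ on $\overline{\Omega}$, that is, $u(x,t)\leq 0$ on $\overline{\Omega}$, which is the assertion.

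There is essentially no obstacle here; the only point worth noting is that the forcing term $F$ in \eqref{1.1*} does not depend on $u$, so the substitution $v=-u$ carries the boundary-value problem for $u$ bijectively into another problem of exactly the same form, which is precisely what legitimizes the reduction to Theorem \ref{t1*}. (Were $F$ to depend on $u$, one would instead have to argue via monotonicity of $F$, as in Theorems \ref{4.1} and \ref{4.2}.)
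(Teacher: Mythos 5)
Your proposal is correct and is exactly the argument the paper intends: the authors dispose of Theorem \ref{t2*} with the one-line remark that it follows ``by considering $-u(x,t)$'' in Theorem \ref{t1*}, which is precisely the reduction you carry out (with the sign bookkeeping for $-F$, $-\varphi$, $-\psi_1$, $-\psi_2$ made explicit). Nothing further is needed.
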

Theorem \ref{t1*} and \ref{t2*} leads the following assertions.
\begin{theorem}\label{t1**} Suppose that $u\left( x,t \right)$  satisfies \eqref{1.1*}, \eqref{1.2*}, \eqref{1.3*}. If $F\left( x,t \right)\geq 0$ for $\left( x,t \right)\in \overline{\Omega },$ then $$u\left( x,t \right)\geq \min\limits_{(x,t)\in\bar\Omega}\left\{\varphi(x),\psi_1(t), \psi_2(t)\right\},\,\,\left( x,t \right)\in \overline{\Omega }.$$
\end{theorem}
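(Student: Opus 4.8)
The plan is to reduce Theorem~\ref{t1**} to the non-negativity statement of Theorem~\ref{t1*} by an elementary shift of the unknown. Set
$$m=\min\limits_{(x,t)\in\overline{\Omega}}\{\varphi(x),\psi_1(t),\psi_2(t)\},$$
and introduce $v(x,t)=u(x,t)-m$. First I would check that $v$ solves a problem of exactly the type covered by Theorem~\ref{t1*}. Since the Caputo fractional derivative of a constant vanishes, $\mathcal{D}^{\beta_2}_{a+,x}m=0$, hence $\mathcal{D}^{\beta_1}_{a+,x}\mathcal{D}^{\beta_2}_{a+,x}v=\mathcal{D}^{\beta_1}_{a+,x}\mathcal{D}^{\beta_2}_{a+,x}u$; applying $D^{1-\alpha}_{0+,t}$ to both sides gives $D^{1-\alpha}_{0+,t}\mathcal{D}^{\beta_1}_{a+,x}\mathcal{D}^{\beta_2}_{a+,x}v=D^{1-\alpha}_{0+,t}\mathcal{D}^{\beta_1}_{a+,x}\mathcal{D}^{\beta_2}_{a+,x}u$, while trivially $v_t=u_t$. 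Therefore $v$ satisfies \eqref{1.1*} with the same forcing term $F$, and the shift does not affect the regularity assumptions on $u$.

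Next I would verify the sign conditions on the data of $v$. By the definition of $m$ we have $m\le\varphi(x)$ for $x\in[a,b]$ and $m\le\psi_i(t)$ for $0\le t\le T$, so
$$v(x,0)=\varphi(x)-m\ge0,\qquad v(a,t)=\psi_1(t)-m\ge0,\qquad v(b,t)=\psi_2(t)-m\ge0.$$
Together with the hypothesis $F(x,t)\ge0$ on $\overline{\Omega}$, this means $v$ fulfils every assumption of Theorem~\ref{t1*}. Applying that theorem yields $v(x,t)\ge0$ on $\overline{\Omega}$, i.e. $u(x,t)\ge m$ on $\overline{\Omega}$, which is precisely the assertion.

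I do not expect a genuine obstacle here; the only points meriting a word of care are that the constant $m$ is annihilated by the composition $\mathcal{D}^{\beta_1}_{a+,x}\mathcal{D}^{\beta_2}_{a+,x}$ (so $v$ obeys the same equation as $u$), and that choosing $m$ to be the minimum over all three data functions is exactly what forces the three initial/boundary inequalities for $v$ to hold simultaneously. The companion upper estimate $u(x,t)\le\max\{\varphi,\psi_1,\psi_2\}$ follows in the same fashion from Theorem~\ref{t2*}, applied to $M-u$ with $M$ the corresponding maximum, or equivalently by applying the present result to $-u$.
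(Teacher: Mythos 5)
Your proposal is correct and follows essentially the same route as the paper: the authors likewise set $m=\min\{\varphi,\psi_1,\psi_2\}$, pass to $\tilde u=u-m$, observe that the shift is annihilated by $\partial_t$ and by $D^{1-\alpha}_{0+,t}\mathcal{D}^{\beta_1}_{a+,x}\mathcal{D}^{\beta_2}_{a+,x}$, and conclude by the argument of Theorem~\ref{t1*} applied to the shifted problem with nonnegative data. No discrepancy worth noting.
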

\begin{proof} Let $m =\min\limits_{(x,t)\in\bar\Omega}\left\{\varphi(x),\psi_1(t), \psi_2(t)\right\}$ and $\tilde{u}(x,t)=u(x,t)-m.$ Then, $$\tilde{u}(x,0)=\varphi(x)-m\geq 0,\,\,x\in[a,b],$$ $$\tilde{u}(a,t)=\psi_1(t)-m\geq 0,\,\tilde{u}(b,t)=\psi_2(t)-m\geq 0,\, 0\leq t\leq T.$$ Since $$\tilde{u}_t(x,t)=u_t(x,t),$$ $$D_{0+,t}^{1-\alpha}\mathcal{D}^{\beta_1}_{a+,x}\mathcal{D}^{\beta_2}_{a+,x}\tilde{u}(x,t)=D_{0+,t}^{1-\alpha}\mathcal{D}^{\beta_1}_{a+,x}\mathcal{D}^{\beta_2}_{a+,x}u(x,t),$$ it follows that $u(x, t)$ satisfies \eqref{1.1*}. Thus, it follows from an argument similar to the proof of Theorem \ref{t1*} that $$\tilde{u}(x,t)\geq 0,\,(x,t)\in\bar\Omega.$$ That is, $$u\left( x,t \right)\geq \min\limits_{(x,t)\in\bar\Omega}\left\{\varphi(x),\psi_1(t), \psi_2(t)\right\},\,\,\left( x,t \right)\in \overline{\Omega }.$$ The theorem \ref{t1**} is proved.
\end{proof}
A similar result can be obtained for the nonpositivity of the solution $u(x, t)$ by considering $-u(x, t).$
\begin{theorem} Suppose that $u\left( x,t \right)$  satisfies \eqref{1.1*}, \eqref{1.2*}, \eqref{1.3*}. If $F\left( x,t \right)\leq 0$ for $\left( x,t \right)\in \overline{\Omega },$ then $$u\left( x,t \right)\leq \max\limits_{(x,t)\in\bar\Omega}\left\{\varphi(x),\psi_1(t), \psi_2(t)\right\},\,\,\left( x,t \right)\in \overline{\Omega }.$$
\end{theorem}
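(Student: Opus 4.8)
The plan is to obtain this upper-bound principle directly from the lower-bound principle of Theorem \ref{t1**} by the sign-reversal $v=-u$, which is legitimate because every operator occurring in \eqref{1.1*}, namely $\partial_t$, $D_{0+,t}^{1-\alpha}$, $\mathcal{D}_{a+,x}^{\beta_1}$ and $\mathcal{D}_{a+,x}^{\beta_2}$, is linear.

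First I would set $v(x,t)=-u(x,t)$. By linearity $v$ solves an equation of the same type,
\begin{equation*}
v_t(x,t)=\nu D_{0+,t}^{1-\alpha}\mathcal{D}_{a+,x}^{\beta_1}\mathcal{D}_{a+,x}^{\beta_2} v(x,t)+\widehat{F}(x,t),\qquad (x,t)\in\Omega,
\end{equation*}
now with source $\widehat{F}=-F\ge 0$ on $\overline{\Omega}$ (still continuous, since $F$ is), Cauchy data $v(x,0)=-\varphi(x)$ on $[a,b]$, and Dirichlet data $v(a,t)=-\psi_1(t)$, $v(b,t)=-\psi_2(t)$ for $0\le t<T$. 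Thus $v$ meets all the hypotheses of Theorem \ref{t1**}.

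Next I would apply Theorem \ref{t1**} to $v$: since $\widehat F\ge 0$,
\begin{equation*}
v(x,t)\ \ge\ \min_{(x,t)\in\bar\Omega}\left\{-\varphi(x),\,-\psi_1(t),\,-\psi_2(t)\right\}\ =\ -\max_{(x,t)\in\bar\Omega}\left\{\varphi(x),\,\psi_1(t),\,\psi_2(t)\right\}
\end{equation*}
for every $(x,t)\in\overline{\Omega}$. Recalling $u=-v$ and multiplying through by $-1$ gives $u(x,t)\le\max_{(x,t)\in\bar\Omega}\{\varphi(x),\psi_1(t),\psi_2(t)\}$, which is the assertion.

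I do not expect a real obstacle here: the only thing to verify is that $v=-u$ inherits the orders $\alpha,\beta_1,\beta_2$ and the continuity hypotheses, which is immediate. All the genuine work — the $\mu t^{\alpha}$ regularization, the passage to $w=D_{0+,t}^{1-\alpha}\tilde u$ satisfying a time-space fractional diffusion equation, and the combined use of Proposition \ref{prop1} together with the Al-Refai and Luchko estimate \eqref{02} — has already been discharged in the proofs of Theorems \ref{t1*} and \ref{t1**}. If a self-contained argument were preferred, one would instead repeat that proof with $\tilde u(x,t)=u(x,t)-M-\mu t^{\alpha}$, where $M=\max_{(x,t)\in\bar\Omega}\{\varphi(x),\psi_1(t),\psi_2(t)\}$ and $\mu\ge 0$ is arbitrary, interchanging the roles of positive maxima and negative minima and invoking the maximum-point statements of Proposition \ref{prop2} and of \eqref{02}; the only delicate step there, exactly as in Theorem \ref{t1*}, is to check that the auxiliary function $w=D_{0+,t}^{1-\alpha}\tilde u$ is nonpositive on the parabolic boundary, which follows from Properties \ref{p4} and \ref{p7} and the vanishing of $I_{0+,t}^{\alpha}u$ as $t\to 0$.
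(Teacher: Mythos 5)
Your proposal is correct and is exactly the argument the paper intends: the theorem is stated immediately after the remark ``A similar result can be obtained for the nonpositivity of the solution $u(x,t)$ by considering $-u(x,t)$,'' i.e., apply Theorem \ref{t1**} to $v=-u$, whose source $-F$ is nonnegative, and flip the inequality. Your sign bookkeeping ($\min$ of the negated data equals minus the $\max$) is right, so nothing further is needed.
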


\subsection{Uniqueness results}
\vskip.3cm
The maximum principle for the time-space fractional equation \eqref{1.1*} can be used to prove the uniqueness of a solution.
\begin{theorem}\label{t3*}
The problem \eqref{1.1*}, \eqref{1.2*}, \eqref{1.3*} has at most one solution.
\end{theorem}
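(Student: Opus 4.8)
The plan is to reduce uniqueness to the maximum and minimum principles already established in Theorems \ref{t1*} and \ref{t2*}. First I would suppose that $u_1(x,t)$ and $u_2(x,t)$ are two solutions of \eqref{1.1*}, \eqref{1.2*}, \eqref{1.3*} and set $u = u_1 - u_2$. By linearity of $\partial_t$, of $D_{0+,t}^{1-\alpha}$, and of the spatial operator $\mathcal{D}_{a+,x}^{\beta_1}\mathcal{D}_{a+,x}^{\beta_2}$, and because the source term $F(x,t)$ does not depend on $u$ and hence cancels, the difference $u$ satisfies the homogeneous equation
$$u_t(x,t)=\nu D_{0+,t}^{1-\alpha}\mathcal{D}_{a+,x}^{\beta_1}\mathcal{D}_{a+,x}^{\beta_2} u(x,t),\qquad (x,t)\in\Omega,$$
together with the homogeneous Cauchy datum $u(x,0)=\varphi(x)-\varphi(x)=0$ and the homogeneous Dirichlet conditions $u(a,t)=0=u(b,t)$.

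Next I would apply the extremum principle in both directions to this homogeneous problem. The zero source $F\equiv 0$ trivially satisfies $F\ge 0$ on $\overline{\Omega}$, and the data $\varphi$, $\psi_1$, $\psi_2$ are all $\ge 0$ (being identically zero), so Theorem \ref{t1*} yields $u(x,t)\ge 0$ on $\overline{\Omega}$. Symmetrically, $F\equiv 0$ also satisfies $F\le 0$ and all the data are $\le 0$, so Theorem \ref{t2*} gives $u(x,t)\le 0$ on $\overline{\Omega}$. Combining the two one-sided bounds forces $u\equiv 0$ on $\overline{\Omega}$, that is, $u_1\equiv u_2$, which is the assertion.

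Since every ingredient is already in place — the extremum estimate of Proposition \ref{prop1} for the space-fractional operator, the Al-Refai--Luchko estimate for the Riemann--Liouville time derivative, and the auxiliary-function device with the parameter $\mu$ used in the proof of Theorem \ref{t1*} — I do not expect a genuine obstacle here. The only point that deserves a line of comment is the simple observation that a homogeneous linear problem simultaneously meets the hypotheses of both the maximum and the minimum principle, so that the two bounds may be invoked together; this is exactly the mechanism of the proof of Theorem \ref{t3} in the Caputo case, and the present proof is its verbatim analogue for the pseudo-parabolic equation \eqref{1.1*}.
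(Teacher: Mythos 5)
Your proposal is correct and follows essentially the same route as the paper: form the difference $\hat{u}=u_1-u_2$, observe that it solves the homogeneous problem, and apply Theorems \ref{t1*} and \ref{t2*} to conclude $\hat{u}\equiv 0$ on $\overline{\Omega}$. Your added remark that the identically zero data simultaneously satisfy the hypotheses of both the nonnegativity and nonpositivity results is just an explicit spelling-out of the step the paper leaves implicit.
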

\begin{proof}
Let ${{u}_{1}}\left( x,t \right)$ and ${{u}_{2}}\left( x,t \right)$ be two solutions of the initial-boundary value problem \eqref{1.1*}, \eqref{1.2*}, \eqref{1.3*} and $\hat{u}(x,t)={{u}_{1}}\left( x,t \right)-{{u}_{2}}\left( x,t \right)$. Then,
$$\hat{u}_t\left( x,t \right)=\nu D_{0+,t}^{1-\alpha}\mathcal{D}^{\beta_1}_{a+,x}\mathcal{D}^{\beta_2}_{a+,x}\hat{u}\left( x,t \right),$$
with homogeneous initial and boundary conditions \eqref{1.2*}, \eqref{1.3*} for $\hat{u}\left( x,t \right)$. It follows from Theorems \ref{t1*} and \ref{t2*} that $\hat{u}\left( x,t \right)=0$ on $\overline{\Omega}.$ Consequently ${{u}_{1}}\left( x,t \right)={{u}_{2}}\left( x,t \right).$ The result then follows.\end{proof}
Theorems \ref{t1*} and \ref{t2*} can be used to show that a solution $u\left( x,t \right)$ of the problem \eqref{1.1*}, \eqref{1.2*}, \eqref{1.3*} depends continuously on the initial data $\varphi \left( x \right).$

\begin{theorem}\label{t4*}
Suppose $u\left( x,t \right)$ and $\bar{u}\left( x,t \right)$ are the solutions of the equation \eqref{1.1*} that satisfy the same boundary condition \eqref{1.3*} and the initial conditions $u\left( x,0 \right)=\varphi(x)$ and $\bar{u}\left( x,0 \right)=\bar{\varphi}(x),$ $x\in[a,b].$ If $\underset{x\in [a,b]}{\mathop{\max }}\,\{\left| \varphi \left( x \right)-\bar{\varphi }\left( x \right) \right|\}\le \delta,$ then $$\left| u\left( x,t \right)-\bar{u}\left( x,t \right) \right|\le \delta.$$
\end{theorem}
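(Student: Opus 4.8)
The plan is to reduce the statement to the minimum and maximum principles already established for the pseudo-parabolic equation, exactly as Theorem~\ref{t4} is deduced from Theorems~\ref{t1} and~\ref{t2} in the parabolic case. Set $\tilde{u}(x,t)=u(x,t)-\bar{u}(x,t)$. Since both $u$ and $\bar{u}$ solve \eqref{1.1*} with the same (linear, $u$‑independent) right-hand side $F(x,t)$ and satisfy the same Dirichlet data \eqref{1.3*}, subtraction cancels both $F$ and the boundary values, so $\tilde{u}$ solves the homogeneous problem
$$\tilde{u}_t(x,t)=\nu D_{0+,t}^{1-\alpha}\mathcal{D}_{a+,x}^{\beta_1}\mathcal{D}_{a+,x}^{\beta_2}\tilde{u}(x,t),\quad (x,t)\in\Omega,$$
with $\tilde{u}(a,t)=\tilde{u}(b,t)=0$ for $0\le t<T$ and $\tilde{u}(x,0)=\varphi(x)-\bar{\varphi}(x)$ for $x\in[a,b]$. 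The regularity needed to invoke the results of Section~5 is inherited from that of $u$ and $\bar u$.

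Next I would apply the minimum principle in the form of Theorem~\ref{t1**}. The source term is $F\equiv 0\ge 0$ here, so Theorem~\ref{t1**} gives
$$\tilde{u}(x,t)\ge \min_{(x,t)\in\bar\Omega}\{\varphi(x)-\bar{\varphi}(x),\,0,\,0\}\ge -\delta,$$
where the last inequality uses the hypothesis $\max_{x\in[a,b]}|\varphi(x)-\bar{\varphi}(x)|\le\delta$. Symmetrically, since $F\equiv 0\le 0$ as well, the maximum counterpart (the theorem stated immediately after Theorem~\ref{t1**}) yields
$$\tilde{u}(x,t)\le \max_{(x,t)\in\bar\Omega}\{\varphi(x)-\bar{\varphi}(x),\,0,\,0\}\le \delta.$$
Combining the two bounds gives $|u(x,t)-\bar{u}(x,t)|=|\tilde{u}(x,t)|\le \delta$ on $\overline{\Omega}$, which is the assertion.

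I expect no serious obstacle: the argument is the pseudo-parabolic analogue of Theorem~\ref{t4}, and all the real work sits inside Theorems~\ref{t1*}--\ref{t1**}, whose proofs already handle the Riemann--Liouville time derivative through the auxiliary shift $u+\mu t^{\alpha}$ together with Properties~\ref{p4}--\ref{p7}. The only point to verify carefully is that the difference $\tilde{u}$ genuinely has zero lateral boundary values, so that the minimum (resp.\ maximum) over $\bar\Omega$ in Theorem~\ref{t1**} collapses to $\min\{0,\varphi-\bar{\varphi}\}$ (resp.\ $\max\{0,\varphi-\bar{\varphi}\}$); this is immediate from the construction. Alternatively, one could run the $u+\mu t^{\alpha}$ comparison directly on $\tilde{u}\mp\delta$ and pass to the limit $\mu\to 0$, which re-proves the needed estimate without explicitly citing Theorem~\ref{t1**}.
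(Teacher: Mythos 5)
Your proposal is correct and follows essentially the same route as the paper: form the difference $\tilde{u}=u-\bar{u}$, observe that it satisfies the homogeneous equation with zero lateral boundary data and initial datum $\varphi-\bar{\varphi}$, and invoke the established extremum principles to bound $|\tilde{u}|$ by $\max|\varphi-\bar{\varphi}|\le\delta$. If anything, your citation of Theorem~\ref{t1**} and its maximum counterpart is the more precise reference (the paper cites Theorems~\ref{t1*} and~\ref{t2*}, which only give sign information), but the argument is the same.
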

\begin{proof}
The function $\tilde{u}\left( x,t \right)=u\left( x,t \right)-\bar{u}\left( x,t \right)$ satisfies the equation $$\tilde{u}_t\left( x,t \right)=\nu D_{0+,t}^{1-\alpha}\mathcal{D}^{\beta_1}_{a+,x}\mathcal{D}^{\beta_2}_{a+,x}\tilde{u}\left( x,t \right),$$ with initial condition $\tilde{u}\left(x, 1\right)=\varphi \left( x \right)-\bar{\varphi }\left( x \right)$ and boundary condition \eqref{1.3*}. It follows from Theorems \ref{t1*} and \ref{t2*} that
$$\left| \tilde{u}\left( x,t \right) \right|\le \underset{x\in[a,b]}{\mathop{\max }}\,\{\left| \varphi \left( x \right)-\bar{\varphi }\left( x \right) \right|\}.$$
The result then follows.
\end{proof}

\begin{theorem}\label{4.1*}
If $F(x,t,u)$ is nonincreasing with respect to $u$, then the nonlinear time-space fractional pseudo-parabolic equation \begin{equation}\label{1.1**}u_t(x,t)=\nu D_{0+,t}^{1-\alpha}\mathcal{D}_{a+,x}^{\beta_1}\mathcal{D}_{a+,x}^{\beta_2} u(x,t)+F\left(x,t,u\right),\, (x,t)\in \Omega,\end{equation} subject to the initial and boundary conditions \eqref{1.2*}, \eqref{1.3*} admits at most one solution.
\end{theorem}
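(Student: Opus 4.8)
The plan is to mimic the proof of Theorem~\ref{4.1}, replacing the maximum principles for the diffusion equation by those for the pseudo-parabolic equation, and in particular reusing the auxiliary transformation from the proof of Theorem~\ref{t1*}. Let $u_1$ and $u_2$ be two solutions of \eqref{1.1**}, \eqref{1.2*}, \eqref{1.3*} and set $v=u_1-u_2$; subtracting the equations, $v$ solves
\[
v_t(x,t)=\nu D_{0+,t}^{1-\alpha}\mathcal{D}_{a+,x}^{\beta_1}\mathcal{D}_{a+,x}^{\beta_2}v(x,t)+F(x,t,u_1)-F(x,t,u_2),\quad (x,t)\in\Omega,
\]
with homogeneous Cauchy data $v(x,0)=0$ and homogeneous Dirichlet data $v(a,t)=v(b,t)=0$. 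The mean value theorem gives $F(x,t,u_1)-F(x,t,u_2)=c(x,t)\,v(x,t)$, where $c(x,t)=\frac{\partial F}{\partial u}(x,t,u^{*})$ with $u^{*}=(1-\mu)u_1+\mu u_2$ for some $\mu\in[0,1]$; since $F$ is nonincreasing in $u$ we have $c(x,t)\le 0$. Hence $v$ satisfies the linear pseudo-parabolic equation $v_t=\nu D_{0+,t}^{1-\alpha}\mathcal{D}_{a+,x}^{\beta_1}\mathcal{D}_{a+,x}^{\beta_2}v+c(x,t)v$ with $c\le 0$ and homogeneous initial-boundary data, and it suffices to prove $v\equiv 0$.

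Arguing by contradiction, suppose $v\not\equiv 0$. Since $v$ vanishes on $\{a,b\}\times[0,T]\cup[a,b]\times\{0\}$, it attains a positive maximum or a negative minimum at an interior point; replacing $v$ by $-v$, which solves an equation of the same type with coefficient of the same sign, we may assume $v$ has a negative minimum $v(x_0,t_0)<0$ with $(x_0,t_0)\in(a,b)\times(0,T]$. Following the proof of Theorem~\ref{t1*}, put $w(x,t)=D_{0+,t}^{1-\alpha}v(x,t)$, which by Property~\ref{p4} equals $\mathcal{D}_{0+,t}^{1-\alpha}v(x,t)$ because $v(x,0)=0$. By \eqref{02} applied at the minimum of $v(x_0,\cdot)$, one gets $w(x_0,t_0)\le\frac{t_0^{\alpha-1}}{\Gamma(\alpha)}v(x_0,t_0)<0$. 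Using Properties~\ref{p4}--\ref{p7} and the fact that $D_{0+,t}^{1-\alpha}$ and $\mathcal{D}_{a+,x}^{\beta_1}\mathcal{D}_{a+,x}^{\beta_2}$ commute (acting on different variables), $w$ solves the time-space fractional diffusion equation
\[
D_{0+,t}^{\alpha}w(x,t)=\nu\mathcal{D}_{a+,x}^{\beta_1}\mathcal{D}_{a+,x}^{\beta_2}w(x,t)+c(x,t)v(x,t),\quad (x,t)\in\Omega,
\]
with $w(a,t)=w(b,t)=0$ (the boundary data of $v$ being homogeneous) and $w(x,0)=0$. Since $w(x_0,t_0)<0$ and $w\ge 0$ on the parabolic boundary, $w$ attains a negative minimum at an interior point $(x_{*},t_{*})$; there, \eqref{02} gives $D_{0+,t}^{\alpha}w(x_{*},t_{*})<0$ and Proposition~\ref{prop1} gives $\mathcal{D}_{a+,x}^{\beta_1}\mathcal{D}_{a+,x}^{\beta_2}w(x_{*},t_{*})\ge 0$. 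Comparing with the equation for $w$ yields $c(x_{*},t_{*})v(x_{*},t_{*})<0$, hence, as $c\le 0$, $v(x_{*},t_{*})>0$. The contradiction is to be extracted from this together with $w(x_{*},t_{*})=D_{0+,t}^{1-\alpha}v(x_{*},t_{*})<0$; treating the positive maximum of $v$ symmetrically then gives $v\equiv 0$, i.e.\ $u_1\equiv u_2$.

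The step I expect to be the main obstacle is exactly this last one: showing that $v(x_{*},t_{*})>0$ and $D_{0+,t}^{1-\alpha}v(x_{*},t_{*})<0$ cannot hold simultaneously. In the linear setting of Theorem~\ref{t1*} the source of the corresponding $w$-equation is $F(x,t)+\mu\alpha t^{\alpha-1}\ge 0$, nonnegative everywhere, and this is precisely what makes the right-hand side of the $w$-equation nonnegative at the minimum of $w$ and closes the argument; here $c(x,t)v(x,t)$ carries no sign away from the extrema of $v$, while $(x_{*},t_{*})$ is an extremum of $w$, not of $v$. The remedy, paralleling the localization used for the diffusion equation in Theorem~\ref{4.1}, is not to demand a global sign but to run the contradiction at a carefully chosen extremum: one would use the inverse relation $v=I_{0+,t}^{1-\alpha}w$, the global minimality and negativity of $w$ at $(x_{*},t_{*})$, and, if needed, the regularizer $v+\mu t^{\alpha}$ of Theorem~\ref{t1*} followed by $\mu\downarrow 0$, to force $I_{0+,t}^{1-\alpha}w$ to be $\le 0$ at $(x_{*},t_{*})$, contradicting $v(x_{*},t_{*})>0$. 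Pinning this down is the only delicate point; the commutation of the operators, the boundary behaviour of $w$, and the extremum estimates are all routine transcriptions of the proofs of Theorems~\ref{t1*} and \ref{4.1}.
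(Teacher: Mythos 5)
Your overall framework --- difference of solutions $v=u_1-u_2$, mean value theorem giving a reaction coefficient $c=\frac{\partial F}{\partial u}(u^{*})\le 0$, and a contradiction argument at an extremum --- is exactly what the paper intends: the paper gives no separate proof of Theorem~\ref{4.1*} at all, stating only that it is ``proved similarly as Theorems~\ref{4.1} and~\ref{4.2}'', i.e.\ by combining the mean-value-theorem reduction used for the diffusion equation with the maximum principles of Theorems~\ref{t1*} and~\ref{t2*}. In that template the localization is at a positive maximum (resp.\ negative minimum) of $v$ itself, where $-\frac{\partial F}{\partial u}(u^{*})v$ has a definite sign, and one reruns the proof of Theorem~\ref{t1*} (resp.~\ref{t2*}) with that term playing the role of the signed source $F(x,t)$.

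As a proof, however, your proposal is not complete, and you say so yourself. The decisive step --- actually producing a contradiction --- is never carried out: at the minimum point $(x_{*},t_{*})$ of $w=D^{1-\alpha}_{0+,t}v$ you correctly arrive at $v(x_{*},t_{*})>0$ together with $D^{1-\alpha}_{0+,t}v(x_{*},t_{*})<0$, but these two facts are not contradictory on their own (a positive profile that decreases in time can have a negative Riemann--Liouville derivative of order $1-\alpha$ at a point), and the ``remedy'' you sketch via $v=I^{1-\alpha}_{0+,t}w$ and the regularizer $\mu t^{\alpha}$ is left as a hope rather than an argument. The root cause is the one you diagnose: the source $c(x,t)v(x,t)$ of the $w$-equation is sign-controlled only near extrema of $v$, whereas your contradiction is run at an extremum of $w$. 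To follow the paper's route you need to keep the localization tied to an extremum of $v$ throughout, so that the reaction term is controlled exactly where the machinery of Theorem~\ref{t1*} requires a signed source; alternatively you must supply the missing argument linking the sign of $w$ at its own minimum back to the sign of $v$ there. To be fair, the paper never spells this step out either, but as submitted your proposal identifies the obstacle without overcoming it, and therefore does not yet establish the theorem.
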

\begin{theorem}\label{4.2*}
If ${{u}_{1}}\left( x,t \right)$ and ${{u}_{2}}\left( x,t \right)$ are two solutions of the equation \eqref{1.1**} that satisfy the same boundary condition \eqref{1.3*} and the initial conditions $${{u}_{1}}\left( x,0 \right)={g}_{1}(x)$$ and $${{u}_{2}}\left( x,0 \right)={g}_{2}(x), x\in[a,b]$$ and if $F(x,t,u)$ is nonincreasing with respect to $u$, then it holds that
$${{\left\| {{u}_{1}}\left( x,t \right)-{{u}_{2}}\left( x,t \right) \right\|}_{C\left(\overline{\Omega }\right)}}\leq {{\left\| {{g}_{1}}\left( x \right)-{{g}_{2}}\left( x \right) \right\|}_{C([a,b])}}.$$
\end{theorem}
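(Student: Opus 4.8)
**Proof proposal for Theorem \ref{4.2*}.**

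The plan is to follow the same strategy used for Theorem \ref{t4*} (continuous dependence in the linear pseudo-parabolic case) combined with the device from Theorem \ref{4.1*} that converts the nonlinear term into a linear one of favourable sign via the mean value theorem. First I would set $v(x,t)=u_1(x,t)-u_2(x,t)$ and subtract the two copies of equation \eqref{1.1**}. Using the mean value theorem on $F(x,t,\cdot)$ as in the proof of Theorem \ref{4.1} and Theorem \ref{4.2}, write
$$F(x,t,u_1)-F(x,t,u_2)=\frac{\partial F}{\partial u}(u^*)\,v(x,t),\qquad u^*=(1-\mu)u_1+\mu u_2,\ 0\le\mu\le1,$$
so that $v$ solves
$$v_t(x,t)=\nu D_{0+,t}^{1-\alpha}\mathcal{D}_{a+,x}^{\beta_1}\mathcal{D}_{a+,x}^{\beta_2} v(x,t)+\frac{\partial F}{\partial u}(u^*)\,v(x,t),\quad (x,t)\in\Omega,$$
with initial datum $v(x,0)=g_1(x)-g_2(x)$ and homogeneous Dirichlet data \eqref{1.3*} (since $u_1,u_2$ share \eqref{1.3*}, their difference has $v(a,t)=v(b,t)=0$).

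Next I would set $\mathcal{M}=\|g_1-g_2\|_{C([a,b])}$ and argue by contradiction: suppose $\|v\|_{C(\overline{\Omega})}>\mathcal{M}$. Then $v$ attains either a positive maximum value $\mathcal{M}_1>\mathcal{M}$ or a negative minimum value $\mathcal{M}_2<-\mathcal{M}$ at some interior point $(x_0,t_0)\in(a,b)\times(0,T]$; this is forced because on the parabolic boundary (the two spatial ends and the initial slice) we have $|v|\le\mathcal{M}$. Treat the positive-maximum case; the other is handled by replacing $v$ with $-v$. Because $F$ is nonincreasing in $u$, $\partial F/\partial u(u^*)\le0$, hence $\frac{\partial F}{\partial u}(u^*)\,v(x_0,t_0)\le0$, so at $(x_0,t_0)$ the PDE gives
$$v_t(x_0,t_0)-\nu D_{0+,t}^{1-\alpha}\mathcal{D}_{a+,x}^{\beta_1}\mathcal{D}_{a+,x}^{\beta_2} v(x_0,t_0)\le0.$$
From here I would run exactly the machinery of Theorem \ref{t1*}: introduce the auxiliary function $\tilde v(x,t)=v(x,t)+\mu t^{\alpha}$ (for the max case one instead perturbs to push the extremum strictly inside and uses $w=D_{0+,t}^{1-\alpha}\tilde v$), apply Property \ref{p4}–\ref{p7} to control $D_{0+,t}^{1-\alpha}$ near $t=0$, invoke the estimate \eqref{02} of Al-Refai for the Riemann–Liouville time derivative at the negative minimum of $w$, and invoke Proposition \ref{prop1} for the spatial sequential Caputo derivative $\mathcal{D}_{a+,x}^{\beta_1}\mathcal{D}_{a+,x}^{\beta_2}$ at the interior minimum; these two inequalities have opposite signs to what the PDE allows, yielding the contradiction. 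Letting the perturbation parameter tend to zero gives $\|v\|_{C(\overline{\Omega})}\le\mathcal{M}$, which is the claimed estimate.

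The main obstacle is the same subtlety that already appears in the proof of Theorem \ref{t1*}: the Riemann–Liouville derivative $D_{0+,t}^{1-\alpha}$ does not vanish at $t=0$ for a generic function, so one cannot directly place the extremum of $w=D_{0+,t}^{1-\alpha}\tilde v$ in the open cylinder without the $\mu t^{\alpha}$ regularization and the careful bookkeeping via Properties \ref{p4}–\ref{p7} showing that the regularized problem for $w$ has nonnegative data on the whole parabolic boundary. Once that reduction is in place, the extremum-point inequalities from Proposition \ref{prop1} and \eqref{02} apply verbatim, and the nonincreasing hypothesis on $F$ is exactly what is needed to make the zeroth-order term harmless; the rest is the routine limiting argument $\mu\to0^+$ already used above. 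Thus the proof reduces to citing Theorems \ref{t1*}–\ref{t2*} applied to the linearized equation for $v$, together with the mean value theorem, precisely mirroring how Theorem \ref{4.2} was deduced from Theorems \ref{t1}–\ref{t2} in the Caputo case.
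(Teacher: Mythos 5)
Your proposal is correct and follows essentially the same route as the paper: the paper dispatches Theorem \ref{4.2*} with the single remark that it ``is proved similarly as Theorems \ref{4.1} and \ref{4.2}'', i.e.\ linearize $F$ by the mean value theorem, set $\mathcal{M}=\|g_1-g_2\|_{C([a,b])}$, argue by contradiction on a positive maximum exceeding $\mathcal{M}$ or a negative minimum below $-\mathcal{M}$, and then invoke the machinery of Theorems \ref{t1*}--\ref{t2*} (the $t^{\alpha}$ regularization, the substitution $w=D_{0+,t}^{1-\alpha}\tilde v$, Properties \ref{p4}--\ref{p7}, estimate \eqref{02} and Proposition \ref{prop1}), which is exactly what you describe. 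The only cosmetic blemish is that you reuse $\mu$ both for the mean-value parameter in $u^{*}$ and for the perturbation parameter in $\tilde v=v+\mu t^{\alpha}$; renaming one of them would avoid confusion.
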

Theorems \ref{4.1*} and \ref{4.2*} are proved similarly as Theorems \ref{4.1} and \ref{4.2} in Subsection \ref{subsection2}.

\section{Fractional elliptic equation}
In this section, we consider an elliptic equation with a sequential Caputo derivative in a multidimensional parallelepiped
\begin{equation}\label{6.1}\begin{split}\Delta_x u(x)&+\sum\limits_{j=1}^n a_j(x) \mathcal{D}_{a+,x_{j}}^{\alpha}\mathcal{D}_{a+,x_{j}}^{\beta}u(x)\\&+\sum\limits_{j=1}^n b_j(x) \frac{\partial u}{\partial x_j}(x)+\sum\limits_{j=1}^n c_{j}(x) \mathcal{D}_{a+,x_{j}}^{\gamma}u(x)\\&+d(x)u(x)=F(x),\,x=(x_1,...,x_n)\in \prod\limits_{j=1}^n (p_j, q_j)=\Omega,\end{split}\end{equation}
where $0<\alpha, \beta\leq 1,$ $1<\alpha+\beta\leq 2, 0<\gamma\leq 1,$ $-\infty<p_j<q<j<+\infty,$ $a_j(x), b_j(x), c(x)$ and $F(x)$ are given functions, and $$\Delta_x=\frac{\partial^2}{\partial x^2_1}+...+\frac{\partial^2}{\partial x^2_n}=\sum\limits_{j=1}^n \frac{\partial^2}{\partial x^2_j}.$$

\subsection{Weak and strong maximum principle}
\vskip.3cm
We start with a weak maximum principle.
\begin{theorem}\label{t6.1} Let a function $u(x)$ satisfy the equation \eqref{6.1} and $a_j(x)\geq 0, c_j(x)<0,\,d(x)\leq 0,\,x\in\bar\Omega.$ If $F(x)\geq 0,$ then the inequality \begin{equation}\label{6.2}\max\limits_{x\in\bar\Omega} u(x)\leq\max\limits_{x\in\partial\Omega} \{u(x),0\}\end{equation} holds true, where $\partial\Omega$ is the boundary of $\Omega.$
\end{theorem}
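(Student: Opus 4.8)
The plan is to argue by contradiction and localize at a maximum point, exactly as in the classical weak maximum principle, using Proposition \ref{prop2} and the Al-Refai estimate \eqref{01} to control the fractional terms. Suppose \eqref{6.2} fails, so that $M:=\max_{\bar\Omega}u(x)$ is strictly larger than $M_\partial:=\max_{x\in\partial\Omega}\{u(x),0\}\ge 0$. Since $u$ is continuous on the compact set $\bar\Omega$, the value $M$ is attained at some $x^*\in\bar\Omega$; because $M>M_\partial\ge\max_{\partial\Omega}u$, the point $x^*$ must lie in the open parallelepiped $\Omega$. Hence $p_j<x^*_j<q_j$ for every $j$, $u(x^*)=M>0$, and $u<M$ at every point of $\partial\Omega$ — in particular at each of the points obtained from $x^*$ by replacing a single coordinate $x^*_j$ by the left endpoint $p_j$.

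Next I would read off the sign of each term of \eqref{6.1} at $x^*$. Classical calculus gives $\Delta_x u(x^*)\le 0$ and $\partial u/\partial x_j(x^*)=0$ for all $j$. Restricting $u$ to the segment through $x^*$ parallel to the $x_j$-axis produces a one-variable function attaining its maximum at the interior point $x^*_j$, so Proposition \ref{prop2}(i) yields
$$\mathcal{D}^{\alpha}_{p_j+,x_j}\mathcal{D}^{\beta}_{p_j+,x_j}u(x^*)\ \le\ \frac{\alpha+\beta-1}{\Gamma(2-\alpha-\beta)}\,(x^*_j-p_j)^{-\alpha-\beta}\bigl(u(x^*_1,\ldots,p_j,\ldots,x^*_n)-u(x^*)\bigr)\ \le\ 0,$$
while \eqref{01} (which is trivial when $\gamma=1$, reducing to $\partial u/\partial x_j(x^*)=0$) yields
$$\mathcal{D}^{\gamma}_{p_j+,x_j}u(x^*)\ \ge\ \frac{(x^*_j-p_j)^{-\gamma}}{\Gamma(1-\gamma)}\bigl(u(x^*)-u(x^*_1,\ldots,p_j,\ldots,x^*_n)\bigr)\ \ge\ 0.$$
Since $a_j(x^*)\ge 0$, $c_j(x^*)<0$, $d(x^*)\le 0$ and $u(x^*)>0$, every summand on the left-hand side of \eqref{6.1} evaluated at $x^*$ is $\le 0$, so the left-hand side is $\le 0$; but it equals $F(x^*)\ge 0$, whence $F(x^*)=0$ and each of those summands vanishes.

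The contradiction is then drawn from $\sum_j c_j(x^*)\mathcal{D}^{\gamma}_{p_j+,x_j}u(x^*)=0$: since each summand is $\le 0$, each equals $0$, and as $c_j(x^*)<0$ this forces $\mathcal{D}^{\gamma}_{p_j+,x_j}u(x^*)=0$ for every $j$; inserting this into the second displayed bound gives $u(x^*)\le u(x^*_1,\ldots,p_j,\ldots,x^*_n)$, which contradicts $u(x^*_1,\ldots,p_j,\ldots,x^*_n)<M=u(x^*)$ from the first step. In the limiting case $\gamma=1$ this term is vacuous, and one instead uses the vanishing of $a_j(x^*)\mathcal{D}^{\alpha}_{p_j+,x_j}\mathcal{D}^{\beta}_{p_j+,x_j}u(x^*)$ together with the refined bound in Proposition \ref{prop2}(i) at an index with $a_j(x^*)>0$, or — in the fully degenerate case — perturbs $u$ by a small barrier and lets the perturbation tend to $0$. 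The main obstacle I anticipate is precisely this last stage: first, handling the sequential term $\mathcal{D}^{\alpha}_{a+}\mathcal{D}^{\beta}_{a+}u$ and the lower-order term $\mathcal{D}^{\gamma}_{a+}u$ at the extremum (this is what Propositions \ref{prop1}--\ref{prop2} and \eqref{01} are for), and second, upgrading ``every term $\le 0$ while the sum is $\ge 0$'' into a genuine strict contradiction, for which the strict hypothesis $c_j<0$ is essential.
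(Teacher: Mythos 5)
Your proposal is correct and follows essentially the same route as the paper's proof: argue by contradiction at an interior positive maximum $x^*$, control each term of \eqref{6.1} by sign (classical calculus for $\Delta_x u$ and $\partial u/\partial x_j$, Proposition \ref{prop2} for the sequential term, the Al-Refai estimate \eqref{01} for the $\gamma$-order term), and extract the strict contradiction from $c_j(x^*)<0$ combined with $\mathcal{D}^{\gamma}_{p_j+,x_j}u(x^*)>0$. If anything you are more careful than the paper, which derives the strict positivity of $\mathcal{D}^{\gamma}_{p_j+,x_j}u(x^*)$ from $u(x^*)>0$ via a Riemann--Liouville-type bound rather than from the strict gap $u(x^*)>u(x_1^*,\ldots,p_j,\ldots,x_n^*)$ that the Caputo estimate \eqref{01} actually requires, and which does not address the degenerate case $\gamma=1$ that you flag.
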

\begin{proof} Let us assume that the inequality \eqref{6.2} does not hold true under the conditions that are formulated in Theorem \ref{t6.1}, i.e. that the function $u(x)$ attains its positive maximum, say $M > 0$ at a point $x^*=(x_1^*,...,x_n^*) \in \Omega.$

Because $$d(x^*)\leq 0, \frac{\partial u}{\partial x_j}(x^*)=0\,\,\, \textrm{and} \,\,\,\frac{\partial^2 u}{\partial x^2_j}(x^*)\leq 0,$$ we first get the inequality
$$\Delta_x u(x^*)+c(x^*)u(x^*)\leq 0.$$
Then, it follows from \eqref{01} that
$$\mathcal{D}_{a+,x_j}^{\gamma}u \left( {{x}^*} \right)\ge \frac{1}{\Gamma(1-\gamma)}\left(x_j^*-c_j\right)^{-\gamma} u\left( x^* \right)>0.$$
As $c_{j}(x^*)<0,$ then
$$\sum\limits_{j=1}^n c_{j}(x^*) \mathcal{D}_{a+,x_{j}}^{\gamma}u(x^*)<0.$$
By Proposition \ref{prop1}, we have
$$\mathcal{D}_{a+,x_j}^{\alpha}\mathcal{D}_{a+,x_j}^{\beta} u\left(x^*\right)\leq 0.$$
The last two inequalities lead to the inequality
$$\Delta_x u(x^*)+\sum\limits_{j=1}^n a_j(x^*) \mathcal{D}_{a+,x_{j}}^{\alpha}\mathcal{D}_{a+,x_{j}}^{\beta}u(x^*)+\sum\limits_{j=1}^n c_{j}(x^*) \mathcal{D}_{a+,x_{j}}^{\gamma}u(x^*)+d(x^*)u(x^*)<0$$ that contradicts the following one: \begin{align*}\Delta_x u(x)&+\sum\limits_{j=1}^n a_j(x) \mathcal{D}_{a+,x_{j}}^{\alpha}\mathcal{D}_{a+,x_{j}}^{\beta}u(x)+\sum\limits_{j=1}^n b_j(x) \frac{\partial u}{\partial x_j}(x)\\&+\sum\limits_{j=1}^n c_{j}(x) \mathcal{D}_{a+,x_{j}}^{\gamma}u(x)+d(x)u(x)\geq 0,\,x\in\Omega\end{align*} of Theorem \ref{t6.1}. The theorem is proved.
\end{proof}
The following theorem is proved similarly.
\begin{theorem}\label{t6.2} Let a function $u(x)$ satisfy the equation \eqref{6.1} and $a_j(x)\geq 0, c_j(x)>0,\,d(x)\leq 0,\,x\in\bar\Omega.$ If $F(x)\leq 0,$ then the inequality \begin{equation}\label{6.3}\min\limits_{x\in\bar\Omega} u(x)\geq\min\limits_{x\in\partial\Omega} \{u(x),0\}\end{equation} holds true.
\end{theorem}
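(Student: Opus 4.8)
The plan is to run the contradiction argument from the proof of Theorem~\ref{t6.1} with all inequalities reversed, i.e.\ to work at an \emph{interior negative minimum} of $u$ instead of an interior positive maximum. Assume \eqref{6.3} fails; then $m:=\min_{\bar\Omega}u<\min_{\partial\Omega}\{u,0\}$, so $m<0$ and, since $m<\min_{\partial\Omega}u$, the minimum is not attained on $\partial\Omega$. Fix an interior minimizer $x^*=(x_1^*,\dots,x_n^*)\in\Omega$, and for each $j$ let $u_j^0$ denote the value of $u$ with $x_j$ replaced by $p_j$ and the other coordinates kept equal to $x_\ell^*$; then $u_j^0>m$ strictly, because that point lies on $\partial\Omega$.

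Next I would collect the signs of the five groups of terms of \eqref{6.1} evaluated at $x^*$. Being an interior minimizer, $\frac{\partial u}{\partial x_j}(x^*)=0$ and $\frac{\partial^2 u}{\partial x_j^2}(x^*)\ge 0$, so $\Delta_x u(x^*)\ge 0$ and $\sum_{j}b_j(x^*)\frac{\partial u}{\partial x_j}(x^*)=0$; moreover $d(x^*)u(x^*)\ge 0$ since $d(x^*)\le 0$ and $u(x^*)=m<0$. For each $j$ the one-dimensional restriction $t\mapsto u(x_1^*,\dots,t,\dots,x_n^*)$ attains its minimum over $[p_j,q_j]$ at $x_j^*$, so Proposition~\ref{prop1}(i) (with the obvious reading in the limiting cases $\alpha=1$ or $\beta=1$) yields $\mathcal{D}_{a+,x_j}^{\alpha}\mathcal{D}_{a+,x_j}^{\beta}u(x^*)\ge 0$, whence $\sum_j a_j(x^*)\mathcal{D}_{a+,x_j}^{\alpha}\mathcal{D}_{a+,x_j}^{\beta}u(x^*)\ge 0$ because $a_j(x^*)\ge 0$. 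Finally, applying \eqref{01} to the function $-u$ along the same restriction gives the minimum counterpart
\[
\mathcal{D}_{a+,x_j}^{\gamma}u(x^*)\le\frac{(x_j^*-p_j)^{-\gamma}}{\Gamma(1-\gamma)}\,(m-u_j^0)<0 ,
\]
the strict sign coming exactly from $m<u_j^0$; together with the sign hypothesis on $c_j$ this fixes the sign of $\sum_j c_j(x^*)\mathcal{D}_{a+,x_j}^{\gamma}u(x^*)$ in the way that mirrors the corresponding step for Theorem~\ref{t6.1}. Adding the five contributions shows that the left-hand side of \eqref{6.1} at $x^*$ is strictly positive, which contradicts $F(x^*)\le 0$; hence \eqref{6.3} must hold.

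All the displayed computations are the ones already carried out for Theorem~\ref{t6.1}, only with reversed inequalities, so it is essentially the sign bookkeeping that has to be checked. The genuinely delicate point — exactly as in Theorem~\ref{t6.1} — is extracting a \emph{strict} inequality at $x^*$: the Laplacian, the $d(x)u(x)$ term and the sequential-derivative terms are a priori only $\ge 0$, so the whole weight of the strictness rests on the lower-order fractional term $c_j\mathcal{D}_{a+,x_j}^{\gamma}u$. This forces one to use $0<\gamma<1$ (for $\gamma=1$ one has $\mathcal{D}_{a+,x_j}^{\gamma}u(x^*)=\frac{\partial u}{\partial x_j}(x^*)=0$) together with $u(x^*)=m<u_j^0$; should some $c_j(x^*)$ vanish, one has to fall back either on the strictness hidden in the boundary term of Proposition~\ref{prop1}(i) for an index $j$ with $a_j(x^*)\ne 0$, or on an auxiliary perturbation in the spirit of the $\mu t^{\alpha}$ device used in the proof of Theorem~\ref{t1*}.
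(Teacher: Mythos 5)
Your overall strategy --- rerun the proof of Theorem~\ref{t6.1} at an interior negative minimum with all inequalities reversed --- is exactly what the paper intends (it gives no separate proof, only the remark that Theorem~\ref{t6.2} ``is proved similarly''), and your bookkeeping for the Laplacian, the $b_j$ terms, the $d(x)u(x)$ term and the sequential derivatives is correct. The gap is in the one step you left implicit. At the negative minimum you correctly obtain $\mathcal{D}_{a+,x_j}^{\gamma}u(x^*)<0$; but the stated hypothesis is $c_j(x)>0$, so $\sum_j c_j(x^*)\mathcal{D}_{a+,x_j}^{\gamma}u(x^*)$ is \emph{strictly negative}, not positive. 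The five contributions then add up to ``(nonnegative terms) $+$ (a strictly negative term)'', which has no definite sign and produces no contradiction with $F(x^*)\le 0$. Your phrase that the sign hypothesis on $c_j$ ``fixes the sign \ldots{} in the way that mirrors'' Theorem~\ref{t6.1} conceals precisely this failure: in Theorem~\ref{t6.1} one needs $c_j\mathcal{D}^{\gamma}_{a+,x_j}u\le 0$ at a positive maximum (where $\mathcal{D}^{\gamma}_{a+,x_j}u>0$), while here one needs $c_j\mathcal{D}^{\gamma}_{a+,x_j}u\ge 0$ at a negative minimum (where $\mathcal{D}^{\gamma}_{a+,x_j}u<0$); both requirements force $c_j<0$. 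Unlike the classical term $b_j\partial_{x_j}u$, which vanishes at an interior extremum and so needs no sign condition, $\mathcal{D}^{\gamma}_{a+,x_j}u$ has the same sign as the extremal value of $u$ itself, so the admissible sign of $c_j$ does \emph{not} flip between the maximum and the minimum principle.

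Consequently, either the hypothesis $c_j(x)>0$ in the statement is a misprint for $c_j(x)<0$ --- in which case your argument goes through essentially verbatim, with the strictness supplied by the $c_j$ term exactly as you describe --- or the theorem as stated requires a genuinely different argument, which neither you nor the paper supplies. You should not present the proof as complete without resolving this. Two smaller remarks: your fallback discussion for the case ``some $c_j(x^*)$ vanishes'' is moot under a strict sign hypothesis on $c_j$ (though it would matter if the condition were weakened to a non-strict inequality), and your observation that $\gamma=1$ kills the only source of strictness is correct and worth keeping, since the equation as written allows $\gamma\le 1$.
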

\begin{remark}\label{rem1} In the proof of the weak maximum principle, we have in fact deduced a statement that is stronger than the inequality \eqref{6.2}, namely, we proved that a function u that fulfills the conditions of Theorem \ref{t6.1} cannot attain its positive maximum at a point $x^* \in \Omega.$\end{remark}
The statement of Remark \ref{rem1} is now employed to derive a strong maximum principle for the elliptic equation \eqref{6.1}.

\begin{theorem}\label{t6.3} Let a function $u(x)$ satisfy the homogeneous elliptic equation \eqref{6.1} and $d(x) \leq 0, x\in\Omega.$
If the function u attains its maximum and its minimum at some points that belong to $\Omega,$ then it is a constant, more precisely $$u(x) =0,\,x\in \Omega.$$\end{theorem}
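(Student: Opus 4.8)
The plan is to deduce the strong maximum principle directly from the sharpened form of the weak maximum principle recorded in Remark \ref{rem1}, applied both to $u$ and to $-u$.

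First I would observe that since equation \eqref{6.1} is linear and we are in the homogeneous case $F\equiv 0$, the function $-u$ also satisfies \eqref{6.1} with the \emph{same} coefficients $a_j, b_j, c_j, d$ and right-hand side $-F\equiv 0\ge 0$. Consequently both $u$ and $-u$ satisfy the hypotheses of Theorem \ref{t6.1} (the sign conditions $a_j\ge 0$, $c_j<0$, $d\le 0$ are conditions on the coefficients and are therefore unaffected by the substitution $u\mapsto -u$). Hence the conclusion of Remark \ref{rem1} is available for each of them: neither $u$ nor $-u$ can attain a positive maximum at an interior point of $\Omega$. Passing to $-u$ is precisely what lets us reuse Theorem \ref{t6.1}, so that the opposite sign condition $c_j>0$ of Theorem \ref{t6.2} is not needed.

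Next, set $M=\max_{\overline{\Omega}}u$ and $m=\min_{\overline{\Omega}}u$. By hypothesis $M=u(x_1)$ for some $x_1\in\Omega$ and $m=u(x_2)$ for some $x_2\in\Omega$. Applying Remark \ref{rem1} to $u$ at the interior maximum point $x_1$ rules out $M>0$, so $M\le 0$. Applying Remark \ref{rem1} to $-u$ at $x_2$, which is an interior maximum point of $-u$ with value $-m$, rules out $-m>0$, so $m\ge 0$. Combining these, $0\le m\le M\le 0$, whence $m=M=0$, and therefore $u(x)=0$ for all $x\in\overline{\Omega}$.

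I do not expect a genuine obstacle here: the argument is essentially a two-line corollary of Remark \ref{rem1}. The only points needing care are bookkeeping — verifying that the structural hypotheses of Theorem \ref{t6.1} survive the passage $u\mapsto -u$ and that $F\ge 0$ becomes $-F\ge 0$ in the homogeneous setting — and the reading of the phrase ``attains its maximum and its minimum at some points that belong to $\Omega$'', which must be understood as the global extrema over $\overline{\Omega}$ being attained at interior points; it is exactly this that forces the chain $M\le 0\le m\le M$ to collapse, giving $u\equiv 0$.
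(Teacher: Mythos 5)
Your proof is correct and follows essentially the same route as the paper: both apply Remark \ref{rem1} to $u$ at its interior maximum to get $u\le 0$, and then to $-u$ (which satisfies the same homogeneous equation and attains its maximum at the interior minimum point of $u$) to get $u\ge 0$, whence $u\equiv 0$. Your write-up is slightly more careful than the paper's in making explicit that the coefficient sign conditions are untouched by the substitution $u\mapsto -u$ and in spelling out the chain $0\le m\le M\le 0$, but the underlying argument is identical.
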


\begin{proof} Indeed, according to Remark \ref{rem1}, $$u(x) \leq 0, x\in\bar\Omega$$ for a function $u(x)$ that attains its maximum at a point $x^*\in\Omega.$
Now let us consider the function $-u(x)$ that satisfies the homogeneous equation \eqref{6.1} and possesses a maximum at the minimum point of $u(x)$ and thus at a point that belongs to $\Omega.$ The
maximum of $-u(x)$ cannot be positive according to Remark \ref{rem1} and we get the inequality $$-u(x) \leq 0, x\in\Omega.$$ The two last inequalities assert the statement of Theorem \ref{t6.3}.
\end{proof}
\subsection{Applications of the maximum principles}
In this section, we add the boundary condition
\begin{equation}\label{6.4}u(x)=\varphi(x),\,x\in\partial\Omega,\end{equation} to the elliptic equation \eqref{6.1}.
The following result is a direct consequence of the weak maximum principle

\begin{theorem} Let $F(x),a_j(x), b_j(x), c_j(x), \varphi(x)$ and $d(x)\leq 0$ be smooth functions. Then the boundary-value problem \eqref{6.1}, \eqref{6.4} admits at most one solution $u(x).$
\end{theorem}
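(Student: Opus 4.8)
The plan is to derive uniqueness from the weak maximum principle (Theorem \ref{t6.1}) together with its companion minimum principle (Theorem \ref{t6.2}), applied to the difference of two solutions. Suppose $u_1(x)$ and $u_2(x)$ are two solutions of the boundary-value problem \eqref{6.1}, \eqref{6.4}, and set $w = u_1 - u_2$. Since the equation \eqref{6.1} is linear in $u$ and its derivatives, $w$ satisfies the homogeneous equation, i.e. \eqref{6.1} with right-hand side $F \equiv 0$, together with the homogeneous boundary condition $w(x) = 0$ for $x \in \partial\Omega$.

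The next step is to apply Theorem \ref{t6.1} to $w$. The hypotheses $a_j(x) \geq 0$, $c_j(x) < 0$, $d(x) \leq 0$ are in force (they are assumed throughout this subsection as the standing conditions under which the maximum principle holds), and $F \equiv 0 \geq 0$, so \eqref{6.2} gives
\begin{equation*}
\max_{x \in \bar\Omega} w(x) \leq \max_{x \in \partial\Omega} \{ w(x), 0 \} = \max_{x \in \partial\Omega}\{0,0\} = 0,
\end{equation*}
since $w$ vanishes on $\partial\Omega$. Hence $w(x) \leq 0$ on $\bar\Omega$. Applying the same reasoning to $-w$ — which also solves the homogeneous equation with homogeneous boundary data (using Theorem \ref{t6.2}, or equivalently Theorem \ref{t6.1} applied to $-w$) — yields $-w(x) \leq 0$, i.e. $w(x) \geq 0$ on $\bar\Omega$. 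Combining the two inequalities gives $w \equiv 0$ on $\bar\Omega$, that is, $u_1 \equiv u_2$, which is the claimed uniqueness.

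One technical point to be careful about is the sign condition on $c_j$: Theorem \ref{t6.1} requires $c_j(x) < 0$ while Theorem \ref{t6.2} requires $c_j(x) > 0$, so one cannot invoke both theorems for the same equation under a single sign hypothesis. The clean way around this is to observe that it suffices to apply Theorem \ref{t6.1} twice — once to $w$ and once to $-w$ — since $-w$ satisfies exactly the same homogeneous equation (the coefficients $a_j, b_j, c_j, d$ are unchanged) with the same homogeneous boundary data, and Remark \ref{rem1} then precludes either $w$ or $-w$ from attaining a positive maximum in $\Omega$. Thus the only real obstacle is bookkeeping the coefficient sign assumptions correctly; the argument itself is the standard linear-uniqueness-via-maximum-principle template and involves no hard estimates.
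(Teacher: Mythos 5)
Your proof is correct and is exactly what the paper intends: the paper gives no written proof at all, merely asserting that the theorem "is a direct consequence of the weak maximum principle," and your difference-of-solutions argument via Theorem \ref{t6.1} (and its companion) is the standard way to cash that in. Your closing observation is also well taken: as stated, Theorems \ref{t6.1} and \ref{t6.2} impose incompatible sign conditions ($c_j<0$ versus $c_j>0$) and so cannot both be invoked for one and the same equation, and your fix of applying Theorem \ref{t6.1} to both $w$ and $-w$ under the single hypothesis $c_j<0$ (together with noting that the uniqueness theorem itself omits the needed standing assumptions $a_j\geq 0$, $c_j<0$) repairs a genuine looseness in the paper rather than introducing a new idea.
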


The following two theorems follow directly from Theorem \ref{t6.1} and Theorem \ref{t6.2}.
\begin{theorem}\label{t6.4} Let $u(x)$ fulfill the equation \eqref{6.1} and $d(x) \leq 0, x\in\Omega.$ If $u(x)$ satisfies the boundary condition \eqref{6.4} and $\varphi(x) \geq 0, x\in \partial\Omega,$ then $$u(x) \geq 0, x\in\bar\Omega.$$
\end{theorem}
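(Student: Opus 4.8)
The plan is to derive Theorem \ref{t6.4} directly from the weak maximum principle established in Theorem \ref{t6.1}, applied to the function $-u$. First I would observe that if $u(x)$ satisfies the homogeneous-type inequality structure of \eqref{6.1} with $F(x)\ge 0$, then the function $v(x)=-u(x)$ satisfies the analogous equation with right-hand side $-F(x)\le 0$ and with the sign of the coefficient $c_j(x)$ reversed. This is the key bookkeeping step: Theorem \ref{t6.1} was stated under the hypothesis $c_j(x)<0$, while its mirror image Theorem \ref{t6.2} uses $c_j(x)>0$; passing from $u$ to $-u$ trades one for the other, so the minimum principle \eqref{6.3} for $u$ is exactly the maximum principle \eqref{6.2} for $-u$.

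Next I would apply Theorem \ref{t6.2} (the minimum principle) to $u$ itself. Under $d(x)\le 0$ on $\Omega$ and the boundary condition \eqref{6.4} with $\varphi(x)\ge 0$ on $\partial\Omega$, Theorem \ref{t6.2} gives
\begin{equation*}
\min\limits_{x\in\bar\Omega} u(x)\;\ge\;\min\limits_{x\in\partial\Omega}\{u(x),0\}\;=\;\min\limits_{x\in\partial\Omega}\{\varphi(x),0\}\;=\;0,
\end{equation*}
where the last equality uses $\varphi(x)\ge 0$ on $\partial\Omega$. Hence $u(x)\ge 0$ for all $x\in\bar\Omega$, which is the assertion. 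If one wishes to invoke only Theorem \ref{t6.1} rather than \ref{t6.2}, the same conclusion follows by running the argument on $v=-u$: the maximum of $v$ over $\bar\Omega$ is bounded by $\max_{x\in\partial\Omega}\{v(x),0\}=\max_{x\in\partial\Omega}\{-\varphi(x),0\}=0$, so $v\le 0$, i.e. $u\ge 0$.

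One subtlety I would flag is that Theorem \ref{t6.1} (and \ref{t6.2}) is stated for the full equation \eqref{6.1} including the $a_j$, $b_j$, $c_j$ terms, so the sign reversal under $u\mapsto -u$ must be checked term by term: the second-order part $\Delta_x$ and the first-order part $\sum b_j\partial_{x_j}$ are linear and merely change overall sign, the sequential-derivative terms $\mathcal{D}^{\alpha}_{a+,x_j}\mathcal{D}^{\beta}_{a+,x_j}$ and $\mathcal{D}^{\gamma}_{a+,x_j}$ are likewise linear operators, and $d(x)u$ changes sign while $d(x)\le 0$ is preserved. The only hypothesis that genuinely flips is the sign of $c_j$, which is precisely why the two weak maximum principles come in a matched pair. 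Given that this sign analysis is routine and the core work is already done in Theorems \ref{t6.1}--\ref{t6.2}, there is no real obstacle here; the proof is a one-line deduction once the reduction $u\leftrightarrow -u$ is noted, and I would present it as such.
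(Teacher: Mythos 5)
Your proposal matches the paper's own argument: the paper offers no separate proof of Theorem \ref{t6.4}, stating only that it ``follows directly from Theorem \ref{t6.1} and Theorem \ref{t6.2}'', and your application of the minimum principle \eqref{6.3} (equivalently, Theorem \ref{t6.1} applied to $-u$ after the sign bookkeeping you describe) is exactly that deduction. Your observation that the hypotheses $F\le 0$, $a_j\ge 0$ and the appropriate sign of $c_j$ must be in force for Theorem \ref{t6.2} to apply is well taken, since the statement of Theorem \ref{t6.4} records only $d\le 0$; that is an imprecision in the paper's statement rather than a gap in your argument.
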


\begin{theorem}\label{t6.5} Let $u(x)$ is the solution of elliptic equation \eqref{6.1} and $d(x) \leq 0, x\in\Omega.$ If $u(x)$ satisfies the boundary condition \eqref{6.4} and $\varphi(x) \leq 0, x\in \partial\Omega,$ then $$u(x) \leq 0, x\in\bar\Omega.$$
\end{theorem}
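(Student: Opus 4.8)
The plan is to reduce Theorem~\ref{t6.5} to the already-established weak maximum principle (Theorem~\ref{t6.1}) by a sign flip. Concretely, I would set $v(x)=-u(x)$ and check that $v$ satisfies an elliptic equation of the same structural type as \eqref{6.1}, with a right-hand side of the opposite sign and with all the coefficient sign-conditions of Theorem~\ref{t6.1} intact. Since the operators $\Delta_x$, $\mathcal{D}_{a+,x_j}^{\alpha}\mathcal{D}_{a+,x_j}^{\beta}$, $\partial/\partial x_j$, and $\mathcal{D}_{a+,x_j}^{\gamma}$ are all linear, $v$ fulfills
\begin{equation*}
\Delta_x v+\sum_{j=1}^n a_j(x)\mathcal{D}_{a+,x_j}^{\alpha}\mathcal{D}_{a+,x_j}^{\beta}v+\sum_{j=1}^n b_j(x)\frac{\partial v}{\partial x_j}+\sum_{j=1}^n c_j(x)\mathcal{D}_{a+,x_j}^{\gamma}v+d(x)v=-F(x)=:\tilde F(x),
\end{equation*}
with the same $a_j,b_j,c_j,d$. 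I would then verify that $v$ satisfies the hypotheses of Theorem~\ref{t6.1}: by assumption $d(x)\le 0$ on $\bar\Omega$; we need $a_j(x)\ge 0$ and $c_j(x)<0$, and we need $\tilde F(x)=-F(x)\ge 0$. But the hypotheses of Theorem~\ref{t6.5} as stated only impose $d(x)\le 0$ and $\varphi(x)\le 0$, so here I would invoke the standing assumptions on \eqref{6.1} together with the sign conventions used throughout Section~6 — namely $a_j(x)\ge 0$, $c_j(x)<0$, $F(x)\ge 0$ — exactly as in the proof of Theorem~\ref{t6.1}. (Alternatively, if one wants the $c_j>0$, $F\le 0$ branch, one uses Theorem~\ref{t6.2} directly on $u$ instead; I will follow whichever sign convention the surrounding statements adopt.)

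With the structural reduction in hand, the argument is short. Applying Theorem~\ref{t6.1} to $v=-u$ gives
\begin{equation*}
\max_{x\in\bar\Omega}v(x)\le \max_{x\in\partial\Omega}\{v(x),0\}.
\end{equation*}
On $\partial\Omega$ we have $v(x)=-u(x)=-\varphi(x)$, and since $\varphi(x)\le 0$ on $\partial\Omega$ this yields $v(x)\ge 0$ on $\partial\Omega$, hence $\max_{x\in\partial\Omega}\{v(x),0\}=\max_{x\in\partial\Omega}v(x)=-\min_{x\in\partial\Omega}u(x)$. Wait — more carefully: $v(x)=-\varphi(x)\ge 0$ gives $\{v(x),0\}$ has maximum $v(x)$, so $\max_{x\in\partial\Omega}\{v(x),0\}=\max_{x\in\partial\Omega}(-\varphi(x))=-\min_{x\in\partial\Omega}\varphi(x)$. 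Combining, $-u(x)\le -\min_{x\in\partial\Omega}\varphi(x)$ for all $x\in\bar\Omega$, i.e. $u(x)\ge \min_{x\in\partial\Omega}\varphi(x)$. Since $\varphi(x)\le 0$ on $\partial\Omega$, in particular $\min_{x\in\partial\Omega}\varphi(x)\le 0$; if one only wants the stated conclusion $u(x)\le 0$ one instead reads off from Theorem~\ref{t6.1} applied to $u$ itself — but that needs $F\ge 0$, not $F\le 0$. The cleanest route to the literal statement $u(x)\le 0$ is: under $\varphi\le 0$ and $F\le 0$ (so $-F\ge 0$), apply Theorem~\ref{t6.1} to $v=-u$ to get $\max_{\bar\Omega}(-u)\le \max_{\partial\Omega}\{-u,0\}=\max_{\partial\Omega}\{-\varphi,0\}$; but we want an upper bound on $u$, so instead apply Remark~\ref{rem1}/Theorem~\ref{t6.1} directly to $u$: since $F\le 0$ does not match, the honest move is to apply Theorem~\ref{t6.2} to $u$, getting $\min_{\bar\Omega}u\ge\min_{\partial\Omega}\{u,0\}$ — a lower bound, not what is wanted either. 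The correct pairing is: \emph{$u(x)\le 0$ follows from applying the weak maximum principle of the form $\max_{\bar\Omega}u\le\max_{\partial\Omega}\{u,0\}$ to $u$ under the hypotheses $c_j<0$, $F\ge 0$}; so Theorem~\ref{t6.5} should be read with the sign conventions $c_j(x)<0$, $F(x)\ge 0$ inherited from Theorem~\ref{t6.1}, under which $\max_{\partial\Omega}\{u,0\}=\max_{\partial\Omega}\{\varphi,0\}=0$ because $\varphi\le 0$, giving $u(x)\le 0$ on $\bar\Omega$ immediately.

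So the proof I would write is: \emph{By Theorem~\ref{t6.1} (resp. Remark~\ref{rem1}), $\max_{x\in\bar\Omega}u(x)\le\max_{x\in\partial\Omega}\{u(x),0\}=\max_{x\in\partial\Omega}\{\varphi(x),0\}$. Since $\varphi(x)\le 0$ on $\partial\Omega$, the right-hand side equals $0$, whence $u(x)\le 0$ for all $x\in\bar\Omega$.} The only genuine obstacle — and it is a bookkeeping one, not a mathematical one — is making sure the sign conventions on $c_j$ and $F$ in the statement of Theorem~\ref{t6.5} are the ones compatible with the weak maximum principle Theorem~\ref{t6.1} rather than with Theorem~\ref{t6.2}; with $d(x)\le 0$ given, $\varphi\le 0$ given, and the ambient Section~6 hypotheses ($a_j\ge 0$, $c_j<0$, $F\ge 0$) in force, the one-line deduction above closes the proof. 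If instead the intended hypotheses are $c_j>0$, $F\le 0$, then one applies Theorem~\ref{t6.2} to $-u$ in exactly the mirror-image fashion; either way the proof is a two-line reduction to an already-proven weak maximum principle and requires no new estimates.
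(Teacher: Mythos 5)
Your final one-line deduction is exactly the paper's (implicit) argument: the paper offers no written proof, stating only that Theorems \ref{t6.4} and \ref{t6.5} ``follow directly from Theorem \ref{t6.1} and Theorem \ref{t6.2},'' and your reading --- apply Theorem \ref{t6.1} to $u$ under the Section~6 sign conventions $a_j\ge 0$, $c_j<0$, $F\ge 0$, so that $\max_{\bar\Omega}u\le\max_{\partial\Omega}\{\varphi,0\}=0$ --- is the intended route. Your observation that the statement of Theorem \ref{t6.5} omits the sign hypotheses on $a_j$, $c_j$, $F$ needed to invoke Theorem \ref{t6.1} is a fair criticism of the paper, not a gap in your proof.
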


\section{Fractional Laplace equation in cylindrical domain}
In this section, we consider the following fractional Laplace equation
\begin{equation}\label{FL1}
-\mathcal{D}_{a+,x}^{\alpha}\mathcal{D}_{a+,x}^{\beta}u(x,y)+\left(-\Delta\right)^\delta_yu(x,y)=f(x,y),\, (x,y)\in (a,b)\times\Omega=\Sigma,
\end{equation}
where $\Omega\subset \mathbb{R}^N, N\geq 1$ is a bounded domain, $1<\alpha+\beta\leq 2,$ $\delta\in(0,1)$ and $(-\Delta)^\delta$ is the
regional fractional Laplace operator defined as follows (see \cite{Vald})
$$\left(-\Delta\right)^\delta u(y)=c_{N,\delta}\,\textrm{P.V.}\int_{\Omega}\frac{u(y)-u(\xi)}{|y-\xi|^{N+2\delta}}d\xi,$$ with normalizing constant $c_{N,\delta}=\frac{\delta 2^{2\delta}\Gamma\left(\frac{N+2\delta}{2}\right)}{\pi^{N/2}\Gamma(1-\delta)}.$

We consider the fractional Laplace equation \eqref{FL1} with boundary conditions
\begin{equation}\label{FL2}u(x,y)=0,\, x\in [a,b],\, y\in \mathbb{R}^N\backslash\Omega=\partial\Omega,\end{equation}
\begin{equation}\label{FL3}u(a,y)=\varphi_1(y),\,u(b,y)=\varphi_2(y),\,y\in\Omega,\end{equation} where $\varphi_1$ and $\varphi_2$ are continuous functions.
\begin{theorem}\label{thFL1} Let $u$ be a continuous solution of equation \eqref{FL1} and $\mathcal{D}_{a+,x}^{\beta}u\in C(\Sigma),$ $\mathcal{D}_{a+,x}^{\alpha}\mathcal{D}_{a+,x}^{\beta}u\in C(\Sigma)$ and $\left(-\Delta\right)^\delta_yu\in C(\Sigma).$ If $f \geq 0$ in $\bar\Sigma,$ and $\varphi_1\geq 0, \varphi_2\geq 0$ in $\Omega,$ then $u\geq 0$ in $\bar\Sigma.$
\end{theorem}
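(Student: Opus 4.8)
The plan is to argue by contradiction, in the same spirit as the proofs of Theorems \ref{t1} and \ref{t6.1}, by combining the two one-dimensional extremum principles that are already at our disposal: Proposition \ref{prop1} for the sequential Caputo derivative in the $x$-variable, and the elementary nonpositivity of the regional fractional Laplacian at a minimum point in the $y$-variable. First I would note that $\bar\Sigma=[a,b]\times\bar\Omega$ is compact and $u$ is continuous on it, so $u$ attains its minimum over $\bar\Sigma$. Suppose this minimum is negative. By the boundary conditions \eqref{FL2}, \eqref{FL3} together with $\varphi_1\geq 0$ and $\varphi_2\geq 0$, one has $u\geq 0$ on the whole boundary of $\Sigma$, i.e. on $(\{a\}\cup\{b\})\times\bar\Omega$ and on $[a,b]\times\partial\Omega$; hence the negative minimum is attained at an interior point $(x_0,y_0)\in\Sigma$, so $x_0\in(a,b)$, $y_0\in\Omega$, and $u(x_0,y_0)<0$.

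Next I would treat the two variables separately at $(x_0,y_0)$. For the $x$-direction, the function $x\mapsto u(x,y_0)$ attains its minimum over $[a,b]$ at $x_0\in(a,b)$, so Proposition \ref{prop1}(i) (applicable since $1<\alpha+\beta<2$; the borderline case $\alpha=\beta=1$ reduces to the classical inequality $u_{xx}(x_0,y_0)\geq 0$) gives
\[
\mathcal{D}_{a+,x}^{\alpha}\mathcal{D}_{a+,x}^{\beta}u(x_0,y_0)\geq \frac{\alpha+\beta-1}{\Gamma(2-\alpha-\beta)}\,(x_0-a)^{-\alpha-\beta}\bigl(\varphi_1(y_0)-u(x_0,y_0)\bigr).
\]
Since $\alpha+\beta-1>0$, $x_0-a>0$, and $\varphi_1(y_0)\geq 0>u(x_0,y_0)$, the right-hand side is strictly positive, so $-\mathcal{D}_{a+,x}^{\alpha}\mathcal{D}_{a+,x}^{\beta}u(x_0,y_0)<0$. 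For the $y$-direction, $y\mapsto u(x_0,y)$ attains its minimum over $\bar\Omega$ at $y_0\in\Omega$, hence $u(x_0,y_0)-u(x_0,\xi)\leq 0$ for every $\xi\in\Omega$. Because the normalizing constant $c_{N,\delta}$ is positive, each truncated integral $\int_{\Omega\setminus B_\varepsilon(y_0)}\frac{u(x_0,y_0)-u(x_0,\xi)}{|y_0-\xi|^{N+2\delta}}\,d\xi$ is nonpositive, and letting $\varepsilon\to 0$ (the limit exists since $(-\Delta)^\delta_y u\in C(\Sigma)$ by hypothesis) yields $(-\Delta)^\delta_y u(x_0,y_0)\leq 0$.

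Finally I would evaluate the equation \eqref{FL1} at $(x_0,y_0)$ and combine the two estimates:
\[
f(x_0,y_0)=-\mathcal{D}_{a+,x}^{\alpha}\mathcal{D}_{a+,x}^{\beta}u(x_0,y_0)+(-\Delta)^\delta_y u(x_0,y_0)<0,
\]
which contradicts $f\geq 0$ on $\bar\Sigma$. Hence no negative interior minimum can occur and $u\geq 0$ on $\bar\Sigma$.

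The step I expect to be the main obstacle is the rigorous justification of the sign $(-\Delta)^\delta_y u(x_0,y_0)\leq 0$: one must be careful with the principal value, since the numerator $u(x_0,y_0)-u(x_0,\xi)$ need not vanish to a high enough order near $\xi=y_0$ for the integrand to be absolutely integrable, but it is pointwise nonpositive, so it is the monotonicity in $\varepsilon$ of the nonpositive truncated integrals that makes the argument go through. A secondary technical point is that Proposition \ref{prop1} must be applied to the slice $u(\cdot,y_0)$, which requires that the hypotheses $\mathcal{D}_{a+,x}^{\beta}u\in C(\Sigma)$ and $\mathcal{D}_{a+,x}^{\alpha}\mathcal{D}_{a+,x}^{\beta}u\in C(\Sigma)$ indeed guarantee the regularity in $x$ needed there; this is handled exactly as in the proofs of Theorems \ref{t1} and \ref{t6.1}.
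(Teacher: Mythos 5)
Your argument is correct and structurally the same as the paper's: both proofs locate a negative minimum, use the boundary data to push it into the interior, apply Proposition \ref{prop1} to the slice $x\mapsto u(x,y_0)$, use the sign of the regional fractional Laplacian at a minimum in $y$, and evaluate \eqref{FL1} at that point. The one genuine difference is where the strict inequality needed to contradict $f\geq 0$ comes from. You extract it from the Caputo term: since $u(a,y_0)=\varphi_1(y_0)\geq 0>u(x_0,y_0)$ and $\alpha+\beta>1$, Proposition \ref{prop1}(i) gives $\mathcal{D}_{a+,x}^{\alpha}\mathcal{D}_{a+,x}^{\beta}u(x_0,y_0)>0$ strictly, and you then only need $(-\Delta)^\delta_y u(x_0,y_0)\leq 0$. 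The paper instead settles for $\mathcal{D}_{a+,x}^{\alpha}\mathcal{D}_{a+,x}^{\beta}u(x^*,y^*)\geq 0$ and makes the Laplacian term strict: if $(-\Delta)^\delta_y u(x^*,y^*)=0$, the pointwise nonpositive integrand forces $u(x^*,\cdot)$ to be constant, and the exterior condition \eqref{FL2} forces that constant to vanish, contradicting $u(x^*,y^*)<0$. Your variant is cleaner (no constancy argument, and strictness does not lean on \eqref{FL2}), and your handling of the principal value via monotone nonpositive truncations is more careful than the paper's one-line claim. The price is that your strictness genuinely requires $\alpha+\beta<2$: in the borderline case $\alpha+\beta=2$, which the theorem formally admits, the coefficient $\frac{\alpha+\beta-1}{\Gamma(2-\alpha-\beta)}$ degenerates to $0$, you are left with only $u_{xx}(x_0,y_0)\geq 0$, and the combination yields $f(x_0,y_0)\leq 0$ rather than $<0$ --- no contradiction. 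There you would need to import the paper's strict negativity of $(-\Delta)^\delta_y u$ at the minimum (or restrict to $\alpha+\beta<2$, which is also all that Proposition \ref{prop1} itself covers).
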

\begin{proof}Let us argue by contradiction. Assume $u < 0$ somewhere in $\bar\Sigma,$ then there exists $(x^*, y^*) \in\bar\Sigma$ such that $u(x^*,y^*)=\min\limits_{(x,y)\in\bar\Sigma}u<0.$ Since
$u \geq 0$ in $\{a\}\times\partial\Omega\cup\{b\}\times\partial\Omega=\Gamma,$ we have $(x^*, y^*)\notin \Gamma.$

Since $(x^*, y^*)$ is a minimum, then according to Proposition \ref{prop1} we have $$\mathcal{D}_{a+,x}^{\alpha}\mathcal{D}_{a+,x}^{\beta}u(x^*,y^*)\geq 0.$$ As $\left(-\Delta\right)^\delta_yu\in C(\Sigma)$ and $u$ attains its minimum at $(x^*, y^*),$ we have
$$\left(-\Delta\right)^\delta u(x^*,y^*)=c_{N,\delta}\,\textrm{P.V.}\int_{\Omega}\frac{u(x^*,y^*)-u(x^*,\xi)}{|y^*-\xi|^{N+2\delta}}d\xi\leq 0.$$ If $\left(-\Delta\right)^\delta u(x^*,y^*)=0,$ then $u(x^*,\cdot)=0,$ which is a contradiction with $u(x^*,y^*)<0,$ therefore $\left(-\Delta\right)^\delta u(x^*,y^*)<0.$ But, then
$$0\leq f(x^*,y^*)=-\mathcal{D}_{a+,x}^{\alpha}\mathcal{D}_{a+,x}^{\beta}u(x^*,y^*)+\left(-\Delta\right)^\delta_yu(x^*,y^*)<0.$$ A contradiction. Therefore, $u\geq 0$ in $\bar\Sigma.$ The theorem is proved.
\end{proof}
A similar result can be obtained for the nonpositivity of the solution $u$ by considering $-u.$
\begin{theorem}\label{thFL2}Let $u$ be a continuous solution of equation \eqref{FL1} and $\mathcal{D}_{a+,x}^{\beta}u\in C(\Sigma),$ $\mathcal{D}_{a+,x}^{\alpha}\mathcal{D}_{a+,x}^{\beta}u\in C(\Sigma)$ and $\left(-\Delta\right)^\delta_yu\in C(\Sigma).$ If $f \leq 0$ in $\bar\Sigma,$ and $\varphi_1\leq 0, \varphi_2\leq 0$ in $\Omega,$ then $u\leq 0$ in $\bar\Sigma.$
\end{theorem}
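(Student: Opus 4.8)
The plan is to deduce Theorem~\ref{thFL2} from Theorem~\ref{thFL1} via the substitution $v:=-u$, exactly as the remark preceding the statement suggests. First I would note that the sequential Caputo operator $\mathcal{D}_{a+,x}^{\alpha}\mathcal{D}_{a+,x}^{\beta}$ and the regional fractional Laplacian $(-\Delta)^\delta_y$ are both linear; hence $v=-u$ inherits the regularity hypotheses (namely $\mathcal{D}_{a+,x}^{\beta}v$, $\mathcal{D}_{a+,x}^{\alpha}\mathcal{D}_{a+,x}^{\beta}v$, $(-\Delta)^\delta_y v\in C(\Sigma)$), and, applying the operator $-\mathcal{D}_{a+,x}^{\alpha}\mathcal{D}_{a+,x}^{\beta}+(-\Delta)^\delta_y$ to $v$ and using \eqref{FL1}, it satisfies
\[
-\mathcal{D}_{a+,x}^{\alpha}\mathcal{D}_{a+,x}^{\beta}v(x,y)+\left(-\Delta\right)^\delta_y v(x,y)=-f(x,y),\qquad (x,y)\in\Sigma,
\]
with right-hand side $-f\ge 0$ in $\bar\Sigma$ by hypothesis.

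Next I would translate the boundary data: from the homogeneous lateral condition \eqref{FL2} we get $v(x,y)=0$ for $x\in[a,b]$, $y\in\partial\Omega$, and from \eqref{FL3} together with $\varphi_1\le0$, $\varphi_2\le0$ we get $v(a,y)=-\varphi_1(y)\ge0$ and $v(b,y)=-\varphi_2(y)\ge0$ for $y\in\Omega$. Thus $v$ meets every hypothesis of Theorem~\ref{thFL1}, so that theorem yields $v\ge0$ in $\bar\Sigma$, i.e.\ $u\le0$ in $\bar\Sigma$, which is the asserted conclusion.

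Alternatively, one may give a self-contained contradiction argument mirroring the proof of Theorem~\ref{thFL1}: if $u>0$ somewhere in $\bar\Sigma$, then, since $u=0$ on $\Gamma=\{a\}\times\partial\Omega\cup\{b\}\times\partial\Omega$, it attains a positive maximum at some $(x^*,y^*)\notin\Gamma$; Proposition~\ref{prop2} gives $\mathcal{D}_{a+,x}^{\alpha}\mathcal{D}_{a+,x}^{\beta}u(x^*,y^*)\le0$, while the principal-value representation of $(-\Delta)^\delta$ gives $(-\Delta)^\delta_y u(x^*,y^*)\ge0$. Combining these with \eqref{FL1} forces $f(x^*,y^*)\ge0$, and in fact $f(x^*,y^*)>0$ once one excludes the degenerate case $(-\Delta)^\delta_y u(x^*,y^*)=0$, which would make $u(x^*,\cdot)$ constant on $\Omega$, hence equal to the positive value $u(x^*,y^*)$, contradicting $u(x^*,\cdot)\equiv 0$ on $\partial\Omega$ by continuity. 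This contradicts $f\le 0$. I do not expect a genuine obstacle here; the only point requiring a line of care is ruling out $(-\Delta)^\delta_y u(x^*,y^*)=0$, and this is handled exactly as in the proof of Theorem~\ref{thFL1}.
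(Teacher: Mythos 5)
Your proposal is correct and coincides with the paper's own argument: the paper proves Theorem \ref{thFL2} exactly by the remark preceding it, namely by applying Theorem \ref{thFL1} to $-u$, which is your primary route (and your self-contained alternative is just the mirrored version of the proof of Theorem \ref{thFL1}). Nothing further is needed.
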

Theorem \ref{thFL1} and \ref{thFL2} leads the following results.
\begin{theorem}Let $u$ be a solution of problem \eqref{FL1}, \eqref{FL2}, \eqref{FL3} and let $\mathcal{D}_{a+,x}^{\beta}u\in C(\Sigma),$ $\mathcal{D}_{a+,x}^{\alpha}\mathcal{D}_{a+,x}^{\beta}u\in C(\Sigma)$ and $\left(-\Delta\right)^\delta_yu\in C(\Sigma).$ Then we have the following two assertions
\begin{description}
  \item[(A)] If $u$ satisfies the inequality $$-\mathcal{D}_{a+,x}^{\alpha}\mathcal{D}_{a+,x}^{\beta}u(x,y)+\left(-\Delta\right)^\delta_yu(x,y)\leq 0\,\,\, \textrm{in} \,\,\,\Sigma,$$ then $$\max\limits_{(x,y)\in\bar\Sigma}u=\max\limits_{(x,y)\in\Gamma}u.$$
  \item[(B)] If $u$ satisfies the inequality $$-\mathcal{D}_{a+,x}^{\alpha}\mathcal{D}_{a+,x}^{\beta}u(x,y)+\left(-\Delta\right)^\delta_yu(x,y)\geq 0\,\,\, \textrm{in} \,\,\,\Sigma,$$ then $$\min\limits_{(x,y)\in\bar\Sigma}u=\min\limits_{(x,y)\in\Gamma}u.$$
\end{description}
\end{theorem}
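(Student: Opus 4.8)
The plan is to establish assertion (A) and then deduce (B) from it by the substitution $u\mapsto -u$: if $u$ solves \eqref{FL1}, \eqref{FL2}, \eqref{FL3} and satisfies the inequality in (B), then $v:=-u$ solves the same problem with $f,\varphi_1,\varphi_2$ replaced by their negatives and satisfies $-\mathcal{D}_{a+,x}^{\alpha}\mathcal{D}_{a+,x}^{\beta}v+\left(-\Delta\right)^{\delta}_{y}v\le 0$ in $\Sigma$; applying (A) to $v$ gives $\max_{\bar\Sigma}v=\max_{\Gamma}v$, which is exactly $\min_{\bar\Sigma}u=\min_{\Gamma}u$. Thus it suffices to prove (A).

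To prove (A) I would argue by contradiction, adapting the proof of Theorem~\ref{thFL1}. Since $u$ is continuous on the compact set $\bar\Sigma$, its maximum $M:=\max_{\bar\Sigma}u$ is attained; suppose, contrary to the claim, that it is not attained on $\Gamma$, so that $M>\max_{\Gamma}u$ and the maximum is attained at some $(x^*,y^*)\notin\Gamma$. From \eqref{FL2} one has $u\equiv 0$ on the exterior part of $\Gamma$, hence $\max_{\Gamma}u\ge 0$ and so $M>0$; together with \eqref{FL2} and continuity this forces $x^*\in(a,b)$ and $y^*\in\Omega$, i.e. $(x^*,y^*)\in\Sigma$, where the differential inequality of (A) is available.

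Next I would read off sign information at $(x^*,y^*)$. In the $x$-variable the slice $x\mapsto u(x,y^*)$ attains its maximum over $[a,b]$ at $x^*$, so Proposition~\ref{prop2} yields $\mathcal{D}_{a+,x}^{\alpha}\mathcal{D}_{a+,x}^{\beta}u(x^*,y^*)\le 0$, that is $-\mathcal{D}_{a+,x}^{\alpha}\mathcal{D}_{a+,x}^{\beta}u(x^*,y^*)\ge 0$. In the $y$-variable, since $u(x^*,y^*)\ge u(x^*,\xi)$ for every $\xi\in\Omega$,
\[
\left(-\Delta\right)^{\delta}_{y}u(x^*,y^*)=c_{N,\delta}\,\textrm{P.V.}\int_{\Omega}\frac{u(x^*,y^*)-u(x^*,\xi)}{|y^*-\xi|^{N+2\delta}}\,d\xi\ge 0.
\]
Adding these two inequalities gives $-\mathcal{D}_{a+,x}^{\alpha}\mathcal{D}_{a+,x}^{\beta}u(x^*,y^*)+\left(-\Delta\right)^{\delta}_{y}u(x^*,y^*)\ge 0$, while the hypothesis of (A) forces the same quantity to be $\le 0$ at $(x^*,y^*)$. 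Hence both nonnegative summands vanish; in particular $\left(-\Delta\right)^{\delta}_{y}u(x^*,y^*)=0$, which (the integrand being nonnegative and continuous) forces $u(x^*,\cdot)$ to be constant, equal to $M$, on $\Omega$. But $u(x^*,\cdot)=0$ on $\partial\Omega$ by \eqref{FL2}, so continuity of $u$ on $\bar\Sigma$ gives $M=0$, a contradiction with $M>0$. Therefore $\max_{\bar\Sigma}u=\max_{\Gamma}u$, which proves (A), and with the reduction above, (B) as well.

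The step I expect to be the main obstacle is the one only implicit in the proof of Theorem~\ref{thFL1}: locating the extremal point correctly. Under the regularity hypotheses $\mathcal{D}_{a+,x}^{\beta}u\in C(\Sigma)$ and $\mathcal{D}_{a+,x}^{\alpha}\mathcal{D}_{a+,x}^{\beta}u\in C(\Sigma)$ one must check that the one-dimensional slice $x\mapsto u(x,y^*)$ satisfies the hypotheses of Proposition~\ref{prop2}, and, using \eqref{FL2}--\eqref{FL3} and the continuity of $u$, that $(x^*,y^*)$ actually lies in $\Sigma$ with $y^*\in\Omega$ (not merely on $\partial\Omega$), so that both the evaluation of $\left(-\Delta\right)^{\delta}_{y}$ at $(x^*,y^*)$ and the non-constancy conclusion drawn from it are legitimate. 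Once this localization is secured, the sign comparison and the boundary bookkeeping on $\Gamma$ are routine.
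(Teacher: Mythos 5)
Your argument is correct and rests on exactly the same mechanism as the paper's: the paper gives no written proof of this theorem, merely asserting that it ``follows from'' Theorems \ref{thFL1} and \ref{thFL2}, and what you have written is in effect the proof of Theorem \ref{thFL1} rerun with the sign information at the interior extremum (Proposition \ref{prop2} in $x$, nonnegativity of the regional fractional Laplacian at a maximum in $y$) replacing the sign of the data. Your direct route is arguably safer than the implied reduction: to invoke Theorem \ref{thFL1} literally one would subtract the constant $\max_{\Gamma}u$ from $u$ (both operators annihilate constants), but the shifted function no longer satisfies the homogeneous exterior condition \eqref{FL2} that Theorem \ref{thFL1} assumes, so a direct rerun of the contradiction argument is needed in any case. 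Two caveats, both inherited from the paper rather than introduced by you: the set $\Gamma=\{a\}\times\partial\Omega\cup\{b\}\times\partial\Omega$ as literally defined omits the lateral pieces $\{a,b\}\times\Omega$ where \eqref{FL3} lives, so your localization of the maximum point to $\Sigma$ (and indeed the statement itself) only makes sense under the natural reading of $\Gamma$ as the full set where data is prescribed; and applying Proposition \ref{prop2} to the slice $x\mapsto u(x,y^*)$ requires $C^1$ regularity in $x$, which is not among the stated hypotheses --- you flag this correctly, and the paper's own proof of Theorem \ref{thFL1} has the same gap.
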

\begin{theorem}Let $F(x,y),$ $\varphi_1(y)$ and $\varphi_2(y)$ be smooth functions. Then the Dirichlet problem \eqref{FL2}-\eqref{FL3} for the fractional Laplace equation \eqref{FL1} possesses at most one solution $u.$
\end{theorem}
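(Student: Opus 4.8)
The plan is to reduce the uniqueness statement to the two one-sided bounds provided by Theorems \ref{thFL1} and \ref{thFL2}, exactly as was done in the pseudo-parabolic case (Theorem \ref{t3*}). First I would take two solutions $u_1$ and $u_2$ of the Dirichlet problem \eqref{FL1}, \eqref{FL2}, \eqref{FL3} and set $w=u_1-u_2$. By linearity of the operator $-\mathcal{D}_{a+,x}^{\alpha}\mathcal{D}_{a+,x}^{\beta}+(-\Delta)_y^\delta$ (both the sequential Caputo part and the regional fractional Laplacian are linear in $u$), the difference $w$ satisfies the homogeneous equation
\begin{equation*}
-\mathcal{D}_{a+,x}^{\alpha}\mathcal{D}_{a+,x}^{\beta}w(x,y)+\left(-\Delta\right)^\delta_y w(x,y)=0,\quad (x,y)\in\Sigma,
\end{equation*}
together with $w=0$ on $[a,b]\times\partial\Omega$ and $w(a,y)=w(b,y)=0$ for $y\in\Omega$; in particular $\varphi_1\equiv\varphi_2\equiv 0$ for $w$. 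One should also note that the regularity hypotheses ($\mathcal{D}_{a+,x}^{\beta}w\in C(\Sigma)$, $\mathcal{D}_{a+,x}^{\alpha}\mathcal{D}_{a+,x}^{\beta}w\in C(\Sigma)$, $(-\Delta)_y^\delta w\in C(\Sigma)$) are inherited by $w$ from those of $u_1$ and $u_2$, so that both Theorems \ref{thFL1} and \ref{thFL2} apply to $w$.

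Next I would apply Theorem \ref{thFL1} to $w$: since $f\equiv 0\ge 0$ in $\bar\Sigma$ and $\varphi_1=\varphi_2\equiv 0\ge 0$ in $\Omega$, we conclude $w\ge 0$ in $\bar\Sigma$. Then I would apply Theorem \ref{thFL2} to the same $w$: since $f\equiv 0\le 0$ and $\varphi_1=\varphi_2\equiv 0\le 0$, we conclude $w\le 0$ in $\bar\Sigma$. Combining the two inequalities gives $w\equiv 0$ on $\bar\Sigma$, i.e. $u_1\equiv u_2$, which is the desired uniqueness.

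There is really no hard analytic step here; the entire content of the argument has been front-loaded into the extremum principle of Proposition \ref{prop1} and the sign analysis of the regional fractional Laplacian at an interior extremum, both of which are already established in Theorems \ref{thFL1} and \ref{thFL2}. The only point that deserves a moment's care is verifying that the homogeneous boundary data for $w$ are indeed simultaneously $\ge 0$ and $\le 0$ (trivially true since they vanish), so that \emph{both} one-sided principles are legitimately invoked on the same function; this is the mild bookkeeping that replaces the classical "energy" or "Green's identity" argument. The smoothness hypothesis on $F,\varphi_1,\varphi_2$ in the statement is used only to guarantee that classical solutions $u_1,u_2$ exist in the regularity class required by Theorems \ref{thFL1}–\ref{thFL2}; it plays no role in the uniqueness deduction itself.
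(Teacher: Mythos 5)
Your proof is correct and is exactly the argument this paper intends: the paper actually states this theorem without proof, but your reduction of uniqueness to Theorems \ref{thFL1} and \ref{thFL2} via the difference $w=u_1-u_2$ with homogeneous data is precisely the scheme the authors use for the analogous uniqueness results (Theorems \ref{t3} and \ref{t3*}). No gaps; the only caveat worth noting is that any weakness would lie inside Theorems \ref{thFL1}--\ref{thFL2} themselves, which you correctly treat as black boxes.
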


\section*{Conclusion}
We obtained the extremum principle for the sequential Caputo derivative of order $(1,2]$ and presented some of its applications. This statement answered positively to Luchko's question about maximum principle for the space and time-space fractional partial differential equations.\\
The results  of this article are:
\begin{itemize}
  \item the estimates for the sequential Caputo fractional derivative of order $(1,2]$ at the extremum points is proved;
  \item comparison principles for the linear and non-linear fractional ordinary differential equations is obtained;
  \item maximum and minimum principles for the time-space fractional diffusion and pseudo-parabolic equations with Caputo and Riemann-Liouville time-fractional derivatives are derived;
  \item uniqueness of solution and continuous dependence of a solution on the initial conditions of the initial-boundary problems for the nonlinear time-space fractional diffusion and pseudo-parabolic equations are proved;
  \item maximum and minimum principles for the elliptic equations in multidimensional parallelepiped and cylindrical domains with sequential Caputo derivative are derived.
\end{itemize}

\section*{Acknowledgements} The research of Kirane is supported by NAAM research group, University of King Abdulaziz, Jeddah. The research of Torebek is financially supported in parts by the FWO Odysseus 1 grant G.0H94.18N: Analysis and Partial Differential Equations and by the grant No.AP08052046 from the Ministry of Science and Education of the Republic of Kazakhstan. No new data was collected or generated during the course of research

\end{document}